\newtheorem{lem}{Lemma}[section]
\newtheorem{cor}[lem]{Corollary}
\newtheorem{prop}[lem]{Proposition}
\newtheorem{thm}[lem]{Theorem}
\newtheorem{Defn}[lem]{Definition}
\newtheorem{Ex}[lem]{Example}
\newtheorem{Question}[lem]{Question}
\newtheorem{Property}[lem]{Property}
\newtheorem{Properties}[lem]{Properties}
\newtheorem{Discussion}[lem]{Remark}
\newtheorem{Construction}[lem]{Construction}
\newtheorem{Notation}[lem]{Notation}
\newtheorem{Fact}[lem]{Fact}
\newtheorem{Notationdefinition}[lem]{Definition/Notation}
\newtheorem{Remarkdefinition}[lem]{Remark/Definition}
\newtheorem{Subprops}{}[lem]
\newtheorem{Para}[lem]{}
\newenvironment{defn}{\begin{Defn}\rm}{\end{Defn}}
\newenvironment{ex}{\begin{Ex}\rm}{\end{Ex}}
\newenvironment{question}{\begin{Question}\rm}{\end{Question}}
\newenvironment{notation}{\begin{Notation}\rm}{\end{Notation}}
\newenvironment{fact}{\begin{Fact}\rm}{\end{Fact}}
\newenvironment{disc}{\begin{Discussion}\rm}{\end{Discussion}}
\newtheorem{intthm}{Theorem}
\newtheorem{intQ}[intthm]{Question}
\newenvironment{intq}{\begin{intQ}\rm}{\end{intQ}}
\newtheorem{intEx}[intthm]{Example}
\newenvironment{intex}{\begin{intEx}\rm}{\end{intEx}}
\newcommand{\cat}[1]{\mathsf{#1}}
\newcommand{\catd}{\mathsf{D}}
\newcommand{\catm}{\cat{M}}
\newcommand{\catc}{\cat{C}}
\newcommand{\pd}{\operatorname{pd}}	
\newcommand{\gkdim}[1]{\mathrm{G}_{#1}\text{-}\!\dim}	
\newcommand{\gcdim}{\gkdim{C}}	
\newcommand{\id}{\operatorname{id}}
\newcommand{\depth}{\operatorname{depth}}	
\newcommand{\rank}{\operatorname{rank}}
\newcommand{\ann}{\operatorname{Ann}}
\newcommand{\ext}{\operatorname{Ext}}	
\newcommand{\rhom}{\mathbf{R}\!\operatorname{Hom}}	
\newcommand{\lotimes}{\otimes^{\mathbf{L}}}
\newcommand{\HH}{\operatorname{H}}
\newcommand{\Hom}{\operatorname{Hom}}	
\newcommand{\spec}{\operatorname{Spec}}
\newcommand{\s}{\mathfrak{S}}
\newcommand{\tor}{\operatorname{Tor}}
\newcommand{\im}{\operatorname{Im}}
\newcommand{\shift}{\mathsf{\Sigma}}
\newcommand{\Pic}{\operatorname{Pic}}
\newcommand{\Ker}{\operatorname{Ker}}
\newcommand{\ideal}[1]{\mathfrak{#1}}
\newcommand{\m}{\ideal{m}}
\newcommand{\n}{\ideal{n}}
\newcommand{\comp}[1]{\widehat{#1}}
\newcommand{\ol}{\overline}
\newcommand{\ass}{\operatorname{Ass}}
\newcommand{\supp}{\operatorname{Supp}}
\newcommand{\bbz}{\mathbb{Z}}
\newcommand{\xra}{\xrightarrow}
\newcommand{\res}{\xra{\simeq}}
\newcommand{\cl}[1]{[#1]}
\newcommand{\tri}{\trianglelefteq}
\newcommand{\trin}{\triangleleft}
\newcommand{\rhat}{\widehat R}
\newcommand{\bhat}{\widehat B}
\newcommand{\chat}{\widehat C}
\newcommand{\ghat}{\widehat G}
\renewcommand{\geq}{\geqslant}
\renewcommand{\leq}{\leqslant}
\renewcommand{\ker}{\Ker}
\renewcommand{\hom}{\Hom}
\numberwithin{equation}{lem}
\begin{document}

\bibliographystyle{amsplain}

\author{Sean Sather-Wagstaff}

\address{Department of Mathematics,
NDSU Dept \# 2750,
PO Box 6050,
Fargo, ND 58108-6050
USA}

\email{Sean.Sather-Wagstaff@ndsu.edu}

\urladdr{http://math.ndsu.nodak.edu/faculty/ssatherw/}

\title{Bass Numbers and Semidualizing Complexes}

\keywords{Bass numbers, semidualizing complexes, semidualizing modules, totally reflexive}
\subjclass[2000]{13D02, 13D05, 13D07, 13D25}

\begin{abstract}
Let $R$ be a commutative local noetherian ring.
We prove that the existence of a  chain  of semidualizing $R$-complexes 
of length $(d+1)$ yields a degree-$d$ polynomial lower bound for the
Bass numbers of $R$. We also show how information about certain
Bass numbers of $R$ provide restrictions on the lengths of chains of semidualizing 
$R$-complexes. To make this article somewhat self-contained, we also include a
survey of some of the basic properties of semidualizing modules, semidualizing complexes
and derived categories.
\end{abstract}

\maketitle

\section*{Introduction}

Throughout this paper
$(R,\m,k)$ is a commutative local noetherian ring.

A classical maxim from module theory states that
the existence of certain types of $R$-modules forces 
ring-theoretic conditions on $R$. For instance, if
$R$ has a dualizing module, then $R$ is Cohen-Macaulay 
and a homomorphic image of a Gorenstein ring. 

This paper is concerned with the consequences of the existence of 
nontrivial \emph{semi}-dualizing $R$-modules and, more generally,
semidualizing $R$-complexes. 
In this introduction, we restrict our attention to the modules.
Essentially, a semidualizing module
differs from a dualizing module in that the semidualizing module is
not required to have finite injective dimension. (See Section~\ref{sec12}
for definitions and background information.)
The set of isomorphism classes of semidualizing $R$-modules 
has a  rich structure. For instance, it
comes equipped with an ordering based on the notion of 
total reflexivity. 

It is not clear that the existence of nontrivial
semidualizing $R$-complexes should have any deep implications for  $R$.
For instance, every ring has at least one semidualizing $R$-module,
namely, the free $R$-module of rank 1.
However, Gerko~\cite{gerko:sdc} has shown that, when $R$ is artinian, the 
existence of certain collections of semidualizing 
$R$-modules implies the existence of a lower bound
for the Loewy length of $R$; moreover, if this lower bound is achieved,
then the Poincar\'e series of $k$ has a very specific form.

The first point of this paper is to show how the existence of nontrivial
semidualizing modules gives some insight into the following questions of Huneke
about the \emph{Bass numbers} $\mu^i_R(R)=\rank_k(\ext^i_R(k,R))$.

\begin{intq} \label{q0001}
Let $R$ be a local Cohen-Macaulay ring.
\begin{enumerate}[\quad(a)]
\item \label{q0001a}
If the sequence $\{\mu^i_R(R)\}$ is bounded, must it be eventually 0? that is, must $R$ be Gorenstein?
\item \label{q0001b}
If the sequence $\{\mu^i_R(R)\}$ is bounded above by a polynomial in $i$, must $R$ be Gorenstein?
\item \label{q0001c}
If $R$ is not Gorenstein, must the sequence $\{\mu^i_R(R)\}$ grow exponentially?
\end{enumerate}
\end{intq}

Some progress on these questions has been made
by 
Borna Lorestani, Sather-Wagstaff and Yassemi~\cite{yassemi:vhgbscm},
Christensen, Striuli and Veliche~\cite{christensen:iabbn}, and
Jorgensen and Leuschke~\cite{jorgensen:gbscm}.
However, each of these questions is still open in general.
The following result gives the connection with semidualizing modules. 
It is contained in Theorem~\ref{thm0101} and Corollary~\ref{cor0101}.
Note that this result does not assume that $R$ is Cohen-Macaulay.

\begin{intthm} \label{thm0001}
Let $R$ be a local ring.
If $R$ has a semidualizing module that is neither dualizing nor free,
then the sequence  of Bass numbers $\{\mu^i_R(R)\}$ is bounded below by a linear polynomial in $i$
and hence
is not eventually constant.
Moreover, if $R$ has a chain of semidualizing modules of length $d+1$,
then the sequence  of Bass numbers $\{\mu^i_R(R)\}$ is bounded below by a polynomial in $i$
of degree $d$.
\end{intthm}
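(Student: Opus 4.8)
The plan is to convert the chain into a factorization of the Bass series of $R$ having one ``nontrivial'' factor for each step of the chain, plus one more coming from the top of the chain, and then to read polynomial growth of $\{\mu^i_R(R)\}$ off this factorization. The engine is the identity, valid for any semidualizing complex $C$,
\[
I^R_R(t)=P^R_C(t)\cdot I^C_R(t),
\]
where $P^R_C(t)=\sum_i\beta^R_i(C)t^i$ is the Poincar\'e series and $I^M_R(t)=\sum_i\mu^i_R(M)t^i$ is a Bass series. This comes from $R\simeq\rhom_R(C,C)$ together with tensor--hom adjointness, $\rhom_R(k,R)\simeq\rhom_R(k\lotimes_R C,C)$: a minimal free resolution writes $k\lotimes_R C$ as the complex $\bigoplus_i\shift^{i}k^{\beta^R_i(C)}$ with zero differential, so $\rhom_R(k\lotimes_R C,C)\simeq\prod_i\shift^{-i}\rhom_R(k,C)^{\beta^R_i(C)}$ --- a product that is finite in each degree because $\ext^j_R(k,C)=0$ for $j<\depth R$ --- and comparing ranks gives $\mu^n_R(R)=\sum_i\beta^R_i(C)\mu^{n-i}_R(C)$. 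This identity is almost certainly among the facts gathered in the survey sections, so I would just cite it.

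First I would telescope along the chain. Arranging it so that $R$ sits at the bottom, $R=C_0\tri C_1\tri\cdots\tri C_d$ (harmless, since $R$ is $C$-reflexive for every semidualizing $C$), I note that $C_{j-1}$ being $C_j$-reflexive makes $\rhom_R(C_{j-1},C_j)$ again semidualizing. The same adjointness computation, run with $\rhom_R(C_{j-1},C_j)$ in place of $C$, gives $I^{\rhom_R(C_{j-1},C_j)}_R(t)=P^R_{C_{j-1}}(t)\cdot I^{C_j}_R(t)$; comparing this against the basic identity for $C_j$ and cancelling $I^{C_j}_R(t)\neq0$ in the domain $\zz[[t]]$ produces the multiplicativity $P^R_{C_j}(t)=P^R_{C_{j-1}}(t)\cdot P^R_{\rhom_R(C_{j-1},C_j)}(t)$. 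Telescoping down to $C_0=R$ and then invoking the basic identity for $C_d$ gives
\[
I^R_R(t)=\Bigl(\prod_{j=1}^{d}P^R_{\rhom_R(C_{j-1},C_j)}(t)\Bigr)\cdot I^{C_d}_R(t).
\]

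The next step bounds each factor below, coefficientwise (write $f\succeq g$). A semidualizing complex has finite projective dimension only when it is a shift of $R$; since the chain is strict, every $\rhom_R(C_{j-1},C_j)$ has infinite projective dimension, so its Betti numbers are eventually all nonzero (otherwise the projective dimension would be finite), whence $P^R_{\rhom_R(C_{j-1},C_j)}(t)\succeq(1-t)^{-1}$ up to a power of $t$, and the product of the $d$ of them is $\succeq(1-t)^{-d}$ up to a power of $t$. Likewise a semidualizing complex of finite injective dimension is dualizing; assuming the top term $C_d$ is not dualizing, $\injdim_R C_d=\infty$, and by the theorem that $\mu^i_R(M)\neq0$ for every $i$ with $\depth_R M\le i\le\injdim_R M$, together with $\depth_R C_d=\depth R$, we get $I^{C_d}_R(t)\succeq t^{\depth R}(1-t)^{-1}$. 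Multiplying, $I^R_R(t)\succeq t^{g}(1-t)^{-(d+1)}=t^{g}\sum_{i\ge0}\binom{i+d}{d}t^i$ for some fixed integer $g$, so $\mu^m_R(R)\ge\binom{m-g+d}{d}$ for $m\ge g$ --- a polynomial in $m$ of degree $d$ with positive leading coefficient $1/d!$. The first assertion is the case $d=1$, using the chain $R\tri C$ with $C$ semidualizing, neither free nor dualizing.

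The main obstacle is securing the inputs in the third step: the ``no gaps'' theorem for Bass numbers (that they do not vanish at an intermediate degree once the injective dimension is infinite) and its easier Betti-number analogue must be invoked, and for a self-contained account stated with care; one must also verify that the composition-product factorization is legitimate --- that $\rhom_R(C_{j-1},C_j)$ is genuinely semidualizing and that the minimal-resolution bookkeeping behind $I^R_R=P^R_C\cdot I^C_R$ is valid for complexes, not merely modules. A subtler point is the precise match between the length of the chain and the degree of the polynomial: if the top term $C_d$ happens to be dualizing, then $I^{C_d}_R(t)$ is a monomial and the estimate yields only degree $d-1$, so one must either build ``some member is non-dualizing'' into the hypothesis (as the first assertion does, by excluding dualizing modules) or first truncate the chain at its largest non-dualizing member. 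Beyond these points the argument is formal manipulation of power series with non-negative integer coefficients.
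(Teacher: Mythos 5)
Your argument is correct in outline and reaches the same factorization of $I^R_R(t)$, but it travels a genuinely different road than the paper.  The paper (Lemma~\ref{l0501} and Theorem~\ref{thm0101}) first passes to $\rhat$ to obtain a dualizing complex $D$, replaces $I^R_R(t)$ by $t^sP^{\rhat}_{D}(t)$ via Fact~\ref{f0206}, extends the chain so that $D$ and $R$ are its endpoints, and then applies Gerko's tensor decomposition $D\simeq\rhom(C^1,C^0)\lotimes_R\cdots\lotimes_R\rhom(C^{d+1},C^d)$ (Fact~\ref{f0220}) to factor $P^{\rhat}_D(t)$ into $d+1$ Betti-series factors; positivity of each factor then rests on Lemma~\ref{l0201}, whose proof is the indecomposability argument of Lemma~\ref{l0200}.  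You instead stay over $R$, telescope the identity $I^R_R=P^R_C\,I^C_R$ (Fact~\ref{f0102}) along the chain, cancel Bass series, and arrive at $I^R_R=\bigl(\prod_j P^R_{\rhom(C_{j-1},C_j)}\bigr)I^{C_d}_R$.  This avoids both the completion and Gerko's complex-level isomorphism (your telescoping amounts to a slick Poincar\'e-series shadow of it, obtained by cancellation rather than by the commutative-diagram argument in Fact~\ref{f0220}), which is a nice simplification.  The price is that the final factor is a Bass series rather than a Poincar\'e series, and to make it have unbounded positive coefficients you must invoke the ``no-gaps'' theorem for Bass numbers ($\mu^i_R(M)\neq 0$ for $\depth_R M\le i\le\id_R M$), a nontrivial theorem that the paper does not need.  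Two details you flag yourself deserve one more word: the claim that a vanishing Betti number forces finite projective dimension of a semidualizing complex is not a triviality --- it is exactly Lemma~\ref{l0201}, which requires Lemma~\ref{l0200}; for Theorem~A your $\rhom_R(C_{j-1},C_j)$ is in fact the module $\hom_R(C_{j-1},C_j)$ (Fact~\ref{f0218'}), so the easy module-level argument does suffice there.  And the chain-length/degree bookkeeping lines up with the paper's once one observes that a strict chain of length $d+1$ in $\s_0(R)$, after appending $R$ at the top, always yields a strict sub-chain of length $d$ below $R$ whose bottom element is non-dualizing (if the original bottom were dualizing, truncate one step).  With those two points secured, your proof is a valid and somewhat more direct alternative to the paper's.
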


\noindent
For readers who are familiar with semidualizing modules, the proof of this result
is relatively straightforward when $R$ is Cohen-Macaulay. 
We outline the proof here. 
Pass to the completion of $R$ in order to assume that $R$ is complete,
and hence has a dualizing module $D$.
The Bass series $I_R^R(t)$ of $R$  then agrees with  the Poincar\'{e} series
$P^R_D(t)$ of $D$, up  to a shift. Because of a result of Gerko~\cite[(3.3)]{gerko:sdc}
the given chain of semidualizing modules yields a factorization 
$P^R_D(t)=P_1(t)\cdots P_{d+1}(t)$ where each $P_i(t)$ is a power
series with positive integer coefficients. 
The result now follows from
straightforward numerics. 
The proof in the general case is essentially the
same: after passing to the completion, use semidualizing \emph{complexes} and
the Poincar\'{e} series of a dualizing complex for $R$.

The second point of this paper is to show how information about 
certain Bass numbers of $R$ force restrictions on the set of isomorphism
classes 
of semidualizing $R$-modules. 
By way of motivation, we recall one of the main open questions about this set:
must it be finite? Christensen and Sather-Wagstaff~\cite{christensen:cmafsdm}
have made some progress on this question,
but the general question is still open. While the current paper
does not address this question directly, we do show that
this set cannot contain chains of arbitrary length under the reflexivity ordering.
This is contained in 
the next result which summarizes Theorems~\ref{prop0101} and~\ref{prop0103}.
Note that the integer $\mu^g_R(R)$ in part~\eqref{thm0002b} is the Cohen-Macaulay type
of $R$.

\begin{intthm} \label{thm0002}
Let $R$ be a local ring of depth $g$.
\begin{enumerate}[\quad\rm(a)]
\item \label{thm0002a}
If $R$ has a chain of semidualizing modules of length $d$,
then $d\leq\mu^{g+1}_R(R)$. So, the ring $R$ does not have
arbitrarily long chains of semidualizing modules.
\item \label{thm0002b}
Assume that $R$ is Cohen-Macaulay. 
Let $h$ denote the number of prime factors of the integer
$\mu^g_R(R)$, counted with multiplicity.
If $R$ has a chain of semidualizing modules of length $d$,
then 
$d\leq h\leq\mu^g_R(R)$.
In particular, if $\mu^g_R(R)$
is prime, then every semidualizing $R$-module
is either free or dualizing for $R$.
\end{enumerate}
\end{intthm}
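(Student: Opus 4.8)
The plan is to read off low-degree coefficients from the factorization of Poincar\'e series recalled in the proof sketch of Theorem~\ref{thm0001}. First I would reduce to the case that $R$ is complete: completion preserves $\operatorname{depth}$ and each Bass number $\mu^i_R(R)$, and carries a chain of semidualizing $R$-modules of length $d$ to one of semidualizing $\comp R$-modules of length $d$, since faithful flatness preserves both the reflexivity relations and the distinctness of isomorphism classes. So assume $R$ is complete with dualizing complex $D$, normalized so that $I^R_R(t)=t^{g}P^R_D(t)$; when $R$ is Cohen--Macaulay, $D$ is the canonical module $\omega$ and $\mu^g_R(R)=\nu(\omega)$ is the type of $R$, where $\nu(-)$ denotes minimal number of generators. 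Given a chain $C_0\trinv\cdots\trinv C_d$ of semidualizing modules, I adjoin $R$ at the bottom and $D$ at the top: since $R$ is least and $D$ greatest in the reflexivity order, this gives a chain $R\trianglelefteq C_0\trinv\cdots\trinv C_d\trianglelefteq D$, and Gerko's factorization telescopes it into $P^R_D(t)=Q(t)\cdot\prod_{j=1}^{d}P^R_{C_j/C_{j-1}}(t)$, where each quotient $C_j/C_{j-1}$ is a \emph{nonfree} semidualizing module (having no higher self-$\ext$, so the factorization stays at the level of modules) and $Q(t)$ is a power series with nonnegative integer coefficients gathering the two end links.

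For~\eqref{thm0002a} I would compare coefficients of $t^1$. A semidualizing module with vanishing first Betti number is free, and a free semidualizing module is $\cong R$; hence each nonfree semidualizing module has positive first Betti number, i.e.\ each $P^R_{C_j/C_{j-1}}(t)$ has constant term $\ge 1$ and linear coefficient $\ge 1$. Multiplying, the linear coefficient of $\prod_{j=1}^{d}P^R_{C_j/C_{j-1}}(t)$ is $\ge d$, and (after the normalization of $D$) this forces $\mu^{g+1}_R(R)=[t^1]\,P^R_D(t)\ge d$.

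For~\eqref{thm0002b}, $R$ is now Cohen--Macaulay, so $D=\omega$ and every factor above is the Poincar\'e series of a semidualizing module; evaluating the factorization at $t=0$ gives
\[
\mu^g_R(R)=\nu(\omega)=\nu(C_0)\cdot\nu(\omega/C_d)\cdot\prod_{j=1}^{d}\nu(C_j/C_{j-1}).
\]
The crux is that a \emph{cyclic} semidualizing module is free: if $\nu(E)=1$ then $E\cong R/I$, and the homothety map $R\to\Hom_R(E,E)$, an isomorphism because $E$ is semidualizing, factors through the surjection $R\to R/I$, so $I=0$ and $E\cong R$. Thus each nonfree $E=C_j/C_{j-1}$ has $\nu(E)\ge 2$, so $\mu^g_R(R)$ is divisible by a product of $d$ integers each $\ge 2$; counting prime factors with multiplicity yields $d\le h$, while $h\le\mu^g_R(R)$ is immediate. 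If $\mu^g_R(R)$ is prime then $h=1$, so $d\le 1$; a semidualizing module $C$ that is neither free nor dualizing would give the length-$2$ chain $R\trinv C\trinv\omega$, a contradiction, so every semidualizing $R$-module is free or dualizing.

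I expect the main obstacle to be the bookkeeping around Gerko's factorization and the normalizations: confirming that each telescoped factor really is $P^R_{C_j/C_{j-1}}(t)$ for a nonfree semidualizing module, that adjoining $R$ and $D$ adds (rather than collapses) links, and that the shift in $I^R_R(t)=t^g P^R_D(t)$ is arranged so that $\mu^{g+1}_R(R)$ and $\mu^g_R(R)$ are exactly the $t^1$- and $t^0$-coefficients of $P^R_D(t)$. The one genuinely new and very short input is the cyclic-implies-free observation, which is precisely what upgrades ``at least $d$ nontrivial factors'' into ``at least $d$ prime factors of the type''.
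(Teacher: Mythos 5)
Your proposal is correct and follows essentially the same route as the paper: pass to the completion to get a dualizing complex, invoke Gerko's factorization of $P^{R}_{D}(t)$ (this is the content of the paper's Lemma~\ref{l0501}, including the shift bookkeeping via Fact~\ref{f0111} and the observation that strictness of each link forces the factor $\Hom_R(C_{j-1},C_j)$ to be a non-free semidualizing module), and then read off the $t^{0}$ and $t^{1}$ coefficients, using in part~\eqref{thm0002b} that a cyclic semidualizing module is isomorphic to $R$ (Fact~\ref{f0201'}) to get constant terms $\geq 2$. The gaps you flag are exactly the ones the paper fills in Lemma~\ref{l0501}, and you should also remember to descend the ``free or dualizing'' conclusion from $\widehat R$ back to $R$ via Fact~\ref{f0204'}.
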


\noindent
As an introductory  application of these ideas, we have the following:

\begin{intex} \label{ex0001}
Let $k$ be a field and set $R=k[\![X,Y]\!]/(X^2,XY)$. 
For each semidualizing $R$-module $C$,
one has $C\cong R$.
Indeed, 
the semidualizing property implies that
$\beta^R_0(C)\mu^0_R(C)=\mu^0_R(R)=1$
where $\beta^R_0(C)$ is the minimal number of generators of $C$.
It follows that $C$ is cyclic, so $C\cong R/\ann_R(C)\cong R$.
See Facts~\ref{f0201'} and~\ref{f0220'}.
\end{intex}

We prove more facts about the semidualizing objects for this ring in Example~\ref{ex0601}.

We now summarize the contents of and philosophy behind this paper.
Section~\ref{sec12} contains the basic properties of semidualizing modules
needed for the proofs of Theorems~\ref{thm0001} and~\ref{thm0002}.
Section~\ref{sec02} outlines the necessary background 
on semidualizing complexes needed for the more general versions of
Theorems~\ref{thm0001} and~\ref{thm0002}, which are the subjects of
Sections~\ref{sec05} and~\ref{sec06}.
Because the natural habitat for semidualizing complexes is the derived category $\catd(R)$,
we include a brief introduction to this category in Appendix~\ref{sec01} for
readers who desire some background.

Sections~\ref{sec12} and~\ref{sec02} are arguably longer than necessary
for the proofs of the results of Sections~\ref{sec05} and~\ref{sec06}.
Moreover, Section~\ref{sec12} is essentially a special case of
Section~\ref{sec02}. This is justified by the third point of this paper:
We hope that, after seeing our applications
to Question~\ref{q0001}, some readers will be motivated
to learn more about semidualizing objects. To further encourage this, 
Section~\ref{sec12} is a brief survey of the theory for modules.
We hope this will be helpful for readers who are familiar with 
dualizing modules, but possibly not familiar with dualizing complexes.

Section~\ref{sec02} is a parallel survey 
of the more general 
semidualizing complexes. It is written for readers who are familiar
with dualizing complexes and
the category of chain complexes and who have at least some knowledge
about the derived category. 

For readers who find their background on
the derived category lacking, Appendix~\ref{sec01} contains
background material on this subject. Our hope is
to impart enough information about this category so that 
most readers get a feeling for the ideas behind our proofs.
As such, we stress the connections between this category and the category
of $R$-modules.

\section{Semidualizing Modules} \label{sec12}

This section contains an introduction to our main players
when they are modules.
These are the semidualizing modules, which were introduced independently
(with different terminology) by
Foxby~\cite{foxby:gmarm},
Golod~\cite{golod:gdagpi},
Vasconcelos~\cite{vasconcelos:dtmc} and
Wakamatsu~\cite{wakamatsu:mtse}.
They generalize Grothendieck's notion of a dualizing module~\cite{hartshorne:lc}
and encompasses duality theories with respect to dualizing modules and
with respect to the ring $R$.

\begin{defn} \label{d0201'}
Let $C$ be an $R$-module. 
The \emph{homothety homomorphism} associated to $C$
is the $R$-module homomorphism
$\smash{\ol\chi}^R_C\colon R\to\hom_R(C,C)$  given by
$\smash{\ol\chi}^R_C(r)(c)=rc$. 
The $R$-module $C$ is \emph{semidualizing} if
it satisfies the following conditions:
\begin{enumerate}[\quad(1)]
\item  \label{d0201a'}
The $R$-module $C$ is finitely generated;
\item  \label{d0201b'}
The homothety map $\smash{\ol\chi}^R_C\colon R\to\hom_R(C,C)$
is an isomorphism; and
\item  \label{d0201c'}
For all $i\geq 1$, we have $\ext^i_R(C,C)=0$. 
\end{enumerate}
An $R$-module $D$ is \emph{dualizing} if  it is semidualizing and
has finite injective dimension.

The set of isomorphism classes of semidualizing
$R$-modules is denoted $\s_0(R)$, and the isomorphism
 class of a given semidualizing $R$-module $C$ is
denoted $\cl C$. 
\end{defn}

\begin{ex} \label{ex0201'}
The $R$-module $R$ is semidualizing, so $R$  has a semidualizing module.
\end{ex}

\begin{disc} \label{d9999}
For this article, we have assumed that the ring $R$ is local.
While this assumption is not necessary for the definitions and basic properties of semidualizing
modules, it does make the theory somewhat simpler. 

Specifically, let $S$ be a commutative noetherian ring, 
not necessarily local, and let $C$ be an $S$-module.
Define the homothety homomorphism $\smash{\ol\chi}^S_C\colon S\to\hom_S(C,C)$,
the semidualizing property, and the set $\s_0(S)$ 
as in~\ref{d0201'}. 
It is straightforward to show that the semidualizing property is local, that is, that $C$ is a semidualizing
$S$-module if and only if $C_{\n}$ is a semidualizing $S_{\n}$-module for each maximal ideal
$\n\subset S$. For instance, every finitely generated projective $S$-module of rank 1 is
semidualizing. In other words, the Picard group $\Pic(S)$ is a subset of  $\s_0(S)$.
Also, the group $\Pic(S)$ acts on
$\s_0(S)$ in a natural way: for each semidualizing $S$-module $C$ and each 
finitely generated projective $S$-module $L$ of rank 1, the $S$-module $L\otimes_SC$
is semidualizing. 
This action is trivial when $S$ is local as the Picard group of a local ring contains 
only the free module of rank 1.

While this gives the nonlocal theory more structure to investigate,
one can view this additional structure as problematic, for the following reason.
Fix a
semidualizing $S$-module $C$ and a 
finitely generated projective $S$-module $L$ of rank 1.
Define the terms ``totally $C$-reflexive'' and ``totally $L\otimes_SC$-reflexive''
as in~\ref{d0202'}. It is straightforward to show that an $S$-module
$G$ is totally $C$-reflexive if and only if it is totally $L\otimes_SC$-reflexive.
In particular, when $\Pic(S)$ is nontrivial, the reflexivity ordering on $\s_0(S)$,
defined as in~\ref{d0204'}, is not antisymmetric. Indeed,  one has
$\cl{C}\tri\cl{L\otimes_SC}\tri\cl{C}$, even though $\cl{C}=\cl{L\otimes_SC}$ if and only if
$L\cong S$. 

One can overcome the lack of antisymmetry by considering the set $\ol{\s_0}(S)$ of
orbits in $\s_0(S)$ under the Picard group
action. (Indeed, investigations of $\ol{\s_0}(S)$ can be found in the work of
Avramov, Iyengar, and Lipman~\cite{avramov:rrc1} and
Frankild, Sather-Wagstaff and Taylor~\cite{frankild:rbsc}.)
However, we choose to avoid this level of generality in the current paper,
not only for the sake of simplicity, but also 
because our applications in Section~\ref{sec05} and~\ref{sec06}
are explicitly for local rings.

For the record, we note that
another level of complexity arises when the ring $S$ is of the form $S_1\times S_2$
where $S_1$ and $S_2$ are (nonzero) commutative noetherian rings.
In this setting, the semidualizing $S$-modules are all of the form
$C_1\oplus C_2$ where each $C_i$ is a semidualizing $S_i$-module.
In other words, each connected component of $\spec(S)$ contributes
a degree of freedom to the elements of $\s_0(S)$, and to $\ol{\s_0}(S)$.
For further discussion, see~\cite{frankild:rrhffd,frankild:rbsc}.
\end{disc}

The next three facts contain fundamental properties of semidualizing modules.

\begin{fact} \label{f0201'}
Let $C$ be a semidualizing $R$-module.
The isomorphism $R\cong\hom_R(C,C)$ implies that
$\ann_R(C)=0$. 
It follows that 
$\supp_R(C)=\spec(R)$ and so $\dim(C)=\dim(R)$.
Furthermore $C$ is  cyclic  if and only if $C\cong R$:
for the nontrivial implication, if $C$ is  cyclic,
then $C\cong R/\ann_R(C)\cong R$. 
Thus, if $C\not\cong R$, then $\beta^R_0(C)\geq 2$.
Here $\beta^R_0(C)$ is the 0th Betti number of $C$, i.e.,
the minimal number of generators of $C$.

Furthermore, the isomorphism 
$R\cong\hom_R(C,C)$ also implies that $\ass_R(C)=\ass(R)$.
It follows that an element $x\in \m$ is $C$-regular if and only if it is $R$-regular.
When $x$ is $R$-regular, one can show that the $R/xR$-module $C/xC$
is semidualizing; see~\cite[(4.5)]{frankild:rrhffd}. 
Hence, by induction, we have $\depth_R(C)=\depth(R)$.
\end{fact}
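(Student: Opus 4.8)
The plan is to extract every assertion from the defining isomorphism $\ol\chi^R_C\colon R\xra{\cong}\hom_R(C,C)$, together with two standard facts from commutative algebra. First, the kernel of $\ol\chi^R_C$ is exactly $\ann_R(C)$, so injectivity of $\ol\chi^R_C$ forces $\ann_R(C)=0$. Since $C$ is finitely generated, this gives $\supp_R(C)=V(\ann_R(C))=\spec(R)$, and hence $\dim(C)=\dim(R/\ann_R(C))=\dim(R)$. For the cyclicity statement, the implication $C\cong R\Rightarrow C$ cyclic is trivial; conversely, a cyclic module is isomorphic to $R/I$ with $I=\ann_R(R/I)=\ann_R(C)=0$, so $C\cong R$. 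Moreover $C\neq 0$, because $\hom_R(C,C)\cong R\neq 0$, so $\beta^R_0(C)\geq 1$ by Nakayama, with equality precisely when $C$ is cyclic; therefore $C\not\cong R$ yields $\beta^R_0(C)\geq 2$.

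For the claim $\ass_R(C)=\ass(R)$, I would invoke the standard identity $\ass_R(\hom_R(M,N))=\supp_R(M)\cap\ass_R(N)$ for a finitely generated module $M$ and an arbitrary module $N$. Taking $M=N=C$ and using $\supp_R(C)=\spec(R)$ from the first step gives $\ass_R(\hom_R(C,C))=\ass_R(C)$, and then the isomorphism $\hom_R(C,C)\cong R$ gives $\ass_R(C)=\ass(R)$. Since the zero-divisors on a finitely generated module over a noetherian ring form the union of its associated primes, an element $x\in\m$ is a zero-divisor on $C$ if and only if it is a zero-divisor on $R$; equivalently, $x$ is $C$-regular if and only if it is $R$-regular.

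The equality $\depth_R(C)=\depth(R)$ I would prove by induction on $\depth(R)$, using the cited fact~\cite[(4.5)]{frankild:rrhffd} that $C/xC$ is a semidualizing $R/xR$-module whenever $x$ is $R$-regular. If $\depth(R)=0$, then $\m\in\ass(R)=\ass_R(C)$, so $\depth_R(C)=0$. If $\depth(R)>0$, pick $x\in\m$ that is $R$-regular, hence $C$-regular; then $C/xC\neq 0$ by Nakayama and $\depth_R(C)=\depth_R(C/xC)+1=\depth_{R/xR}(C/xC)+1$, which by the inductive hypothesis applied to the semidualizing $R/xR$-module $C/xC$ equals $\depth(R/xR)+1=\depth(R)$. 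The only non-formal inputs here are the $\ass$-of-$\hom$ formula and the quoted stability of the semidualizing property under killing a regular element; I expect the former to be the real crux, as the regular-element and depth assertions are essentially corollaries of it, whereas everything else is bookkeeping around the injectivity of the homothety map.
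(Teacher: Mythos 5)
Your proof is correct and follows essentially the same route the paper sketches: injectivity of the homothety map gives $\ann_R(C)=0$, the standard identity $\ass_R(\hom_R(M,N))=\supp_R(M)\cap\ass_R(N)$ (with $M=N=C$) gives $\ass_R(C)=\ass(R)$, and the depth equality comes from the induction using the cited stability of the semidualizing property under factoring out a regular element. The one thing worth stating explicitly in the inductive step is the routine identification $\depth_R(C/xC)=\depth_{R/xR}(C/xC)$, which holds because $C/xC$ is an $R/xR$-module and $\m$-regular sequences on it coincide with $(\m/xR)$-regular sequences; otherwise the bookkeeping is exactly as the paper intends.
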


\begin{fact} \label{f0231'}
If $R$ is Gorenstein, then every semidualizing $R$-module is isomorphic 
to $R$; see~\cite[(8.6)]{christensen:scatac} or Theorem~\ref{prop0101}.
(Note that the assumption that $R$ is local is crucial here
because of Remark~\ref{d9999}.)
The converse of this statement holds when
$R$ has a dualizing module by~\cite[(8.6)]{christensen:scatac}; 
the converse can fail when $R$ does not have a dualizing
module by \cite[(5.5)]{christensen:dvke}. Compare this with Fact~\ref{f0201}.
\end{fact}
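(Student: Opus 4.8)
The plan is to derive the main implication from Gorenstein-dimension theory together with the depth equality of Fact~\ref{f0201'}, and to dispose of the two converse assertions by an elementary argument and a citation. Suppose first that $R$ is Gorenstein and let $C$ be a semidualizing $R$-module. Recall that $R$ is Gorenstein if and only if $\injdim_R R<\infty$, and that this in turn holds if and only if every finitely generated $R$-module has finite Gorenstein dimension. Hence $\gdim_R C<\infty$, and the Auslander--Bridger formula gives
\[
\gdim_R C=\depth R-\depth_R C=0,
\]
the last equality being the content of Fact~\ref{f0201'}. Thus $C$ is totally reflexive: the biduality map $C\to\hom_R(\hom_R(C,R),R)$ is an isomorphism and $\ext^i_R(C,R)=0=\ext^i_R(\hom_R(C,R),R)$ for $i\geq 1$; equivalently, $C$ is totally $R$-reflexive, so $\cl C\tri\cl R$ in the reflexivity ordering of Remark~\ref{d9999}. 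On the other hand, conditions~\eqref{d0201b'} and~\eqref{d0201c'} of the semidualizing property say precisely that $R$ is totally $C$-reflexive, so $\cl R\tri\cl C$ as well. Since the reflexivity ordering on $\s_0(R)$ is antisymmetric for local rings, we conclude $\cl C=\cl R$, i.e., $C\cong R$. (The passage from $\cl R\tri\cl C\tri\cl R$ to $C\cong R$ is exactly \cite[(8.6)]{christensen:scatac}; alternatively, one may invoke Theorem~\ref{prop0101}, since for Gorenstein $R$ with $\depth R=g$ one has $\mu^{g+1}_R(R)=0$, and the bound $d\leq\mu^{g+1}_R(R)$ together with $\cl C\tri\cl R$ for every semidualizing $C$ forces $\s_0(R)=\{\cl R\}$.)

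For the converse when $R$ has a dualizing module $D$: if every semidualizing $R$-module is isomorphic to $R$, then in particular $D\cong R$; since $D$ has finite injective dimension, so does $R$ over itself, whence $R$ is Gorenstein. The failure of this implication when $R$ admits no dualizing module is the content of the example in \cite[(5.5)]{christensen:dvke}, which I would simply cite.

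The main obstacle is the single nontrivial input underlying all of this: a local ring cannot carry two nonisomorphic semidualizing modules each of which is totally reflexive over the other. This is the antisymmetry of the reflexivity ordering on $\s_0(R)$ for local $R$, equivalently \cite[(8.6)]{christensen:scatac}, equivalently the length bound in Theorem~\ref{prop0101}. Turning the formal relations $\cl R\tri\cl C\tri\cl R$ into an honest isomorphism $C\cong R$ is the crux; the reduction to that point via Auslander--Bridger and Fact~\ref{f0201'} is routine.
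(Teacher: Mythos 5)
Your main argument is correct in substance but takes a different route from what the paper indicates. The paper disposes of this Fact purely by citation (to Christensen's paper or to Theorem~\ref{prop0101}, which yields it because a Gorenstein ring has Cohen--Macaulay type $\mu^g_R(R)=1$, hence $h=0$ and $d\leq 0$). You instead give a self-contained derivation: since $R$ is Gorenstein, $R$ is itself a dualizing module, so every finitely generated module has finite $\gdim$ (Fact~\ref{f0211'} with $D=R$); the AB-formula~\ref{f0299'} combined with the depth equality of Fact~\ref{f0201'} forces $\gdim_R C=0$, i.e., $C$ is totally $R$-reflexive; the semidualizing axioms say $R$ is totally $C$-reflexive; and antisymmetry of the reflexivity order (Fact~\ref{f1001'}) gives $C\cong R$. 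This is a legitimate and somewhat more conceptual argument, though note that the antisymmetry you invoke is itself proved in the paper via Gerko's tensor factorization~\ref{f0220'}, the very engine behind Theorem~\ref{prop0101}, so the two routes ultimately draw on the same nontrivial input. Your handling of the two converse assertions matches the paper's.

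Two minor slips, neither of which breaks the logic. First, you have the ordering on $\s_0(R)$ consistently inverted: by Definition~\ref{d0204'}, $\cl B\tri\cl C$ means \emph{$C$} is totally $B$-reflexive, so ``$C$ is totally $R$-reflexive'' gives $\cl R\tri\cl C$ (not $\cl C\tri\cl R$), and ``$R$ is totally $C$-reflexive'' gives $\cl C\tri\cl R$. Since you establish both relations and then apply antisymmetry, the conclusion is unaffected, but the labels are swapped. Second, in your parenthetical alternative you cite Theorem~\ref{prop0101} but actually quote the bound $d\leq\mu^{g+1}_R(R)$, which is Theorem~\ref{prop0103}; Theorem~\ref{prop0101} works too, but via $\mu^g_R(R)=1$, hence $h=0$ and $d\leq 0$.
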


\begin{fact} \label{f0215'}
A result of 
Foxby~\cite[(4.1)]{foxby:gmarm},
Reiten~\cite[(3)]{reiten:ctsgm}
and
Sharp~\cite[(3.1)]{sharp:gmccmlr} 
says that $R$ has a dualizing module if and only if $R$ is
Cohen-Macaulay and a homomorphic image of a Gorenstein ring.
Hence, if $R$ is complete and Cohen-Macaulay, then Cohen's structure theorem
implies that $R$ has a dualizing module.
Compare this with Fact~\ref{f0215}.
\end{fact}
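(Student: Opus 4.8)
The statement combines the cited theorem of Foxby, Reiten and Sharp with a consequence of Cohen's structure theorem, so the plan is to establish each implication of the equivalence and then read off the last sentence. First I would prove that a Cohen--Macaulay homomorphic image of a Gorenstein ring has a dualizing module, by an explicit construction. Write $R=Q/\fa$; after localizing $Q$ at the contraction of $\m$ we may assume $Q$ is local and Gorenstein, say of dimension $n$, and put $d=\dim R$ and $c=n-d$. Since $Q$ is Cohen--Macaulay, $\grade_Q(\fa)=\Ht_Q(\fa)=c$, so $\ext^i_Q(R,Q)=0$ for $i<c$; since $\injdim_Q Q=n$ and $\depth_Q R=d$, the standard bound $\sup\{i:\ext^i_Q(R,Q)\neq0\}=\injdim_Q Q-\depth_Q R$ gives $\ext^i_Q(R,Q)=0$ for $i>c$ as well. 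Hence $\rhom_Q(R,Q)$ has cohomology concentrated in the single degree $c$, where it equals the finitely generated $R$-module $D:=\ext^c_Q(R,Q)$. I would then combine this with the adjunction $\rhom_R(-,\rhom_Q(R,Q))\simeq\rhom_Q(-,Q)$ on $\catd(R)$ and the biduality isomorphism $\rhom_Q(\rhom_Q(M,Q),Q)\simeq M$ for finitely generated $Q$-modules $M$ (which holds since $Q$ is Gorenstein) to compute $\rhom_R(D,D)\simeq R$ and to see that $\rhom_R(k,D)$ is concentrated in a single degree. Reading off homology shows that the homothety map $R\to\hom_R(D,D)$ is an isomorphism, that $\ext^{\geq1}_R(D,D)=0$, and that $\injdim_R D=d<\infty$; thus $D$ is dualizing. (Inside the category of modules this is exactly the computation carried out via the change-of-rings spectral sequence for $\ext$, as in the cited papers.)

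For the converse, suppose $D$ is a dualizing $R$-module; the step I expect to be the real obstacle is showing that $R$ must be Cohen--Macaulay. Here I would invoke two facts about the finitely generated module $D$ of finite injective dimension: Bass's equality $\injdim_R D=\depth R$, and Bass's propagation of nonvanishing Bass numbers, $\mu^i_R(\p,D)\neq0\Rightarrow\mu^{i+1}_R(\q,D)\neq0$ whenever $\q$ covers $\p$. By Fact~\ref{f0201'} we have $\ass_R(D)=\ass(R)$, so every minimal prime of $R$ lies in $\ass_R(D)$; choosing a minimal prime $\p$ with $\dim(R/\p)=\dim R=:n$ and a saturated chain from $\p$ to $\m$ of length $n$, propagation yields $\ext^n_R(k,D)\neq0$, whence $\depth R=\injdim_R D\geq n=\dim R$ and $R$ is Cohen--Macaulay. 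Granting this, the Gorenstein image comes for free from Reiten's idealization trick: the Nagata extension $R\ltimes D$ is a Gorenstein local ring (this is where Cohen--Macaulayness of $R$ and the dualizing property of $D$ enter), and the projection $R\ltimes D\onto R$ realizes $R$ as a homomorphic image of a Gorenstein ring.

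Finally, for the last sentence: if $R$ is complete and Cohen--Macaulay, Cohen's structure theorem presents $R$ as a homomorphic image of a complete regular local ring, which is in particular Gorenstein, and the first implication then supplies a dualizing module for $R$. The only genuinely hard point in this program is the Cohen--Macaulay conclusion in the converse; the rest is a direct construction together with standard citations (Bass's inequalities, Reiten's idealization, Cohen's structure theorem).
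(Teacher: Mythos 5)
The paper gives no argument for this Fact; it merely cites Foxby, Reiten, and Sharp and, for the last sentence, Cohen's structure theorem. So there is no internal proof to compare against, and your sketch should be judged as a reconstruction of the cited arguments --- which it does correctly. In the forward direction, the equalities $\grade_Q(\fa)=\Ht_Q(\fa)=c$ (from $Q$ Cohen--Macaulay) and the Ischebeck-type bound $\sup\{i:\ext^i_Q(R,Q)\neq 0\}=\depth Q-\depth_Q R=c$ collapse $\rhom_Q(R,Q)$ to the single module $D=\ext^c_Q(R,Q)$; the adjunction $\rhom_R(-,\rhom_Q(R,Q))\simeq\rhom_Q(-,Q)$ together with Gorenstein biduality over $Q$ then gives $\rhom_R(D,D)\simeq R$ and $\id_R(D)=d<\infty$, so $D$ is dualizing. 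In the converse, the two Bass results you name (the equality $\injdim_R D=\depth R$ for a nonzero finitely generated module of finite injective dimension, and propagation of nonvanishing Bass numbers along saturated chains of primes), combined with $\ass_R(D)=\ass(R)$ from Fact~\ref{f0201'}, are exactly what is needed to force $\depth R=\dim R$; Reiten's trivial extension $R\ltimes D$ then supplies the Gorenstein cover. The final sentence is immediate once $R$ is complete. The one caveat is self-containment: Bass's propagation theorem and the Gorenstein property of $R\ltimes D$ are themselves the substantive content of the cited papers, so your write-up functions as a well-organized guide to those references rather than a proof from first principles --- which is precisely the role the paper assigns this Fact.
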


We next give the first link between semidualizing modules
and Bass numbers.

\begin{fact} \label{f0206'}
Assume that $R$ is Cohen-Macaulay of depth $g$.
If $R$ has a dualizing module $D$, then 
for each $i\geq 0$ we have
$\mu^{i+g}_R(R)=\beta_{i}^R(D)$.
Moreover, if  $D'$ is a dualizing module for $\rhat$,
then 
for each integer $i\geq 0$ we have
$\mu^{i+g}_R(R)=\mu^{i+g}_{\comp R}(\comp R)=\beta_{i}^{\rhat}(D')$;
see e.g.\ \cite[(1.5.3),(2.6)]{avramov:rhafgd} and~\cite[(V.3.4)]{hartshorne:rad}.
Compare this with Fact~\ref{f0206}.
\end{fact}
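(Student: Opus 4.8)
The plan is to reduce everything to a standard computation with Bass and Betti series and the behavior of dualizing modules under completion. First I would recall that for a Cohen--Macaulay local ring $R$ of depth $g$ admitting a dualizing module $D$, the minimal injective resolution of $R$ is, up to a shift by $g$, built from the same data as a minimal free resolution of $D$. Concretely, the dualizing module gives a duality $\hom_R(-,D)$ which exchanges the minimal free resolution $F_\bullet \res D$ with (a shift of) the minimal injective resolution of $R$; tracking ranks of the modules appearing yields $\mu^{i+g}_R(R)=\beta_i^R(D)$ for all $i\geq 0$. In terms of formal series this is the identity $I^R_R(t)=t^g\,P^R_D(t)$, which is the statement to extract from \cite[(1.5.3),(2.6)]{avramov:rhafgd}.

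Next I would handle the passage to the completion. The Bass numbers are insensitive to completion, i.e.\ $\mu^j_R(R)=\mu^j_{\rhat}(\rhat)$ for all $j$, since $\ext^j_R(k,R)\otimes_R\rhat\cong\ext^j_{\rhat}(k,\rhat)$ by flatness of completion together with finiteness of the relevant modules; this gives the middle equality $\mu^{i+g}_R(R)=\mu^{i+g}_{\rhat}(\rhat)$. Note $\rhat$ is again Cohen--Macaulay of depth $g$ and, by Fact~\ref{f0215'}, has a dualizing module $D'$. Applying the first paragraph's identity over $\rhat$ gives $\mu^{i+g}_{\rhat}(\rhat)=\beta_i^{\rhat}(D')$, which completes the chain of equalities. (One may also observe that $\comp D$ is a dualizing module for $\rhat$, so $D'\cong\comp D$ and $\beta_i^{\rhat}(D')=\beta_i^R(D)$ when $D$ exists over $R$, giving consistency, but this is not needed for the stated claim.)

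The only genuine content beyond bookkeeping is the identity $\mu^{i+g}_R(R)=\beta_i^R(D)$ itself, i.e.\ that the dualizing module converts a minimal free resolution into a minimal injective resolution after the shift by $g=\depth R$. I expect this to be the main point to get right: one needs that $\hom_R(-,D)$ is exact on the relevant resolution (which uses that $D$ has finite, hence zero, higher $\ext$ against free modules, and that $\ext^j_R(k,D)=0$ for $j\neq g$ with $\ext^g_R(k,D)\cong k$, the defining property of a dualizing module in the Cohen--Macaulay case), and that the resulting complex of injectives is minimal. All of this is standard local duality, so I would simply cite \cite[(1.5.3),(2.6)]{avramov:rhafgd} and \cite[(V.3.4)]{hartshorne:rad} rather than reprove it, and spend the written proof mostly on assembling the completion step cleanly.
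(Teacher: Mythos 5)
Your proposal is correct in substance and matches the paper's approach: the paper states this as a Fact and relies on the cited references for the key identity $I^R_R(t)=t^g\,P^R_D(t)$, and your completion step (flat base change of $\ext_R(k,R)$ along $R\to\rhat$, plus Fact~\ref{f0215'} for the existence of $D'$) is exactly the right bookkeeping.

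One small caution about your description of the mechanism: you say $\hom_R(-,D)$ carries a minimal free resolution $F_\bullet\to D$ to (a shift of) the minimal injective resolution of $R$, and you later refer to $\hom_R(F_\bullet,D)$ as ``the resulting complex of injectives.'' This is only literally true in the artinian case, where $D\cong E_R(k)$ is injective. In general $\hom_R(F_i,D)\cong D^{\beta_i^R(D)}$ is a direct sum of copies of $D$, which is not injective unless $R$ is Gorenstein. The clean route to $I^R_R(t)=t^gP^R_D(t)$ in the CM case is the series identity $I^{\rhom_R(X,Z)}_R(t)=P^R_X(t)\,I^Z_R(t)$ (Fact~\ref{f0102}), applied with $X=Z=D$: one uses $\rhom_R(D,D)\simeq R$ and the fact that the minimal injective resolution of $D$ has $E(k)$ appearing only in cohomological degree $g$, so $I^D_R(t)=t^g$. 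Since you do ultimately defer to the cited references for this step rather than relying on the heuristic, this is an imprecision of exposition rather than a gap in the argument, but it is worth fixing if you intend the middle paragraph to read as a sketch of the proof rather than pure signposting.
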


Here is one of the main open questions in this subject.
An affirmative answer for the case
when $R$ is Cohen-Macaulay and equicharacteristic
is given in~\cite[(1)]{christensen:cmafsdm}.
Note that it is crucial that $R$ be local; see Remark~\ref{d9999}.
Also note that, while Theorem~\ref{prop0103} shows that chains in
$\s_0(R)$ cannot have arbitrarily large length, the methods of this paper
do not answer this question.

\begin{question} \label{q0201'}
Is the set $\s_0(R)$  finite?
\end{question}

The next fact documents some fundamental properties.

\begin{fact} \label{f0204'}
When $C$ is a finitely generated $R$-module, it is semidualizing for $R$
if and only if the completion $\widehat C$ is semidualizing for $\rhat$.
See~\cite[(5.6)]{christensen:scatac}.
The essential point of the proof is that there are isomorphisms
$$\ext^i_{\rhat}(\chat,\chat)\cong\rhat\otimes_R\ext^i_R(C,C).$$
(The analogous result holds for the dualizing property by, e.g., \cite[(3.3.14)]{bruns:cmr}.)
Thus, the assignment $C\mapsto\widehat C$ induces a well-defined
function $\s_0(R)\hookrightarrow\s_0(\rhat)$;
this function is injective since, for finitely generated $R$-modules
$B$ and $C$, we have $B\cong C$ if and only if $\bhat\cong\chat$.
From~\cite[(5.5)]{christensen:dvke} we know that this map can fail to be surjective. 
Compare this with Fact~\ref{f0204}.
\end{fact}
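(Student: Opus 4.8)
The plan is to reduce the statement to two standard inputs: the completion map $R\to\rhat$ is faithfully flat, and flat base change commutes with $\ext$ when the first argument is finitely presented (equivalently, finitely generated, since $R$ is noetherian). From these, each of the three defining conditions of Definition~\ref{d0201'} transfers between $C$ and $\chat$.

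First I would establish the displayed isomorphism $\rhat\otimes_R\ext^i_R(C,C)\cong\ext^i_{\rhat}(\chat,\chat)$. Fix a resolution $F_\bullet\res C$ of $C$ by finitely generated free $R$-modules; this exists as $R$ is noetherian and $C$ is finitely generated. Since $\rhat\otimes_R-$ is exact and $\chat=\rhat\otimes_R C$, the complex $\rhat\otimes_R F_\bullet$ is a free resolution of $\chat$ over $\rhat$, so $\ext^i_{\rhat}(\chat,\chat)$ is the $i$th cohomology of $\Hom_{\rhat}(\rhat\otimes_R F_\bullet,\chat)$. For each $j$ the module $F_j$ is finitely presented, so the canonical map $\rhat\otimes_R\Hom_R(F_j,C)\to\Hom_{\rhat}(\rhat\otimes_R F_j,\chat)$ is an isomorphism; naturality in $F_j$ makes these into an isomorphism of complexes $\rhat\otimes_R\Hom_R(F_\bullet,C)\xra{\cong}\Hom_{\rhat}(\rhat\otimes_R F_\bullet,\chat)$. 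Because $\rhat$ is flat over $R$, cohomology commutes with $\rhat\otimes_R-$, and taking $\HH^i$ yields the desired isomorphism.

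With this in hand I would check the three conditions. Condition (1) of Definition~\ref{d0201'} is automatic, since $\chat=\rhat\otimes_R C$ is finitely generated over $\rhat$ whenever $C$ is finitely generated over $R$. For condition (3): each $\ext^i_R(C,C)$ is a finitely generated $R$-module, so by faithful flatness $\ext^i_R(C,C)=0$ if and only if $\rhat\otimes_R\ext^i_R(C,C)=0$ if and only if $\ext^i_{\rhat}(\chat,\chat)=0$; hence one vanishing holds for all $i\geq 1$ exactly when the other does. For condition (2): the $i=0$ instance of the base-change isomorphism, combined with $\rhat\otimes_R R\cong\rhat$, identifies $\rhat\otimes_R\ol\chi^R_C$ with $\ol\chi^{\rhat}_{\chat}$ --- one checks on the generator $1\in R$ that both send it to multiplication by $1$ --- and since $\rhat$ is faithfully flat, $\ol\chi^R_C$ is an isomorphism if and only if $\rhat\otimes_R\ol\chi^R_C$ is, hence if and only if $\ol\chi^{\rhat}_{\chat}$ is. Combining the three equivalences gives the asserted characterization, and the injectivity of the induced map $\s_0(R)\hookrightarrow\s_0(\rhat)$ follows from the standard fact that, for finitely generated $R$-modules $B$ and $C$, one has $B\cong C$ if and only if $\bhat\cong\chat$.

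The main obstacle is purely one of bookkeeping: checking that the base-change comparison is an isomorphism of complexes, not merely a degreewise isomorphism, and that in degree zero it intertwines the homothety maps of $C$ and $\chat$; everything else is a direct application of faithful flatness of $\rhat$. The parenthetical statement about the dualizing property requires the additional input that completion preserves finiteness of injective dimension for finitely generated modules, which is standard.
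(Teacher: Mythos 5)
Your proof is correct and fills in the argument the paper only sketches: the paper cites Christensen's result and singles out the base-change isomorphism $\ext^i_{\rhat}(\chat,\chat)\cong\rhat\otimes_R\ext^i_R(C,C)$ as the essential point, and your derivation of that isomorphism (finite free resolution, flat base change of $\Hom$, exactness commuting with cohomology) together with the faithful-flatness transfer of each of the three conditions is exactly the expected route.
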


Next we summarize the aspects of duality with respect to semidualizing modules
that are relevant for our results.

\begin{defn} \label{d0202'}
Let $C$  and $G$ be $R$-modules. 
The  \emph{biduality homomorphism} associated to $C$ and $G$ is the map
$\smash{\ol\delta}^C_G\colon G\to\hom_R(\hom_R(G,C),C)$
given by $\smash{\ol\delta}^C_G(x)(\phi)=\phi(x)$.

Assume that $C$ is a semidualizing $R$-module.
The $R$-module $G$ is \emph{totally $C$-reflexive}
when
it satisfies the following conditions:
\begin{enumerate}[\quad(1)]
\item  \label{f0208a'}
The $R$-module $G$ is finitely generated;
\item  \label{f0208b'}
The biduality map
$\smash{\ol\delta}^C_G\colon G\to\hom_R(\hom_R(G,C),C)$
is an isomorphism; and
\item  \label{f0208c'}
For all $i\geq 1$, we have $\ext^i_R(G,C)=0=\ext^i_R(\hom_R(G,C),C)$. 
\end{enumerate}
\end{defn}

\begin{fact} \label{f0208'}
Let $C$ be a semidualizing $R$-module.
It is straightforward to show that every finitely generated free $R$-module  is totally $C$-reflexive.
The essential point of the proof is that there are isomorphisms
\begin{gather*}
\ext^i_R(R^n,C)
\cong\begin{cases} 0 &\text{if $i\neq 0$} \\ C^n & \text{if $i=0$} \end{cases} \\
\ext^i_R(\hom_R(R^n,C),C)
\cong\ext^i_R(C^n,C)\cong\ext^i_R(C,C)^n
\cong\begin{cases} 0 &\text{if $i\neq 0$} \\ R^n & \text{if $i=0$} \end{cases}
\end{gather*}
It follows that every finitely generated $R$-module $M$ has a resolution
by totally $C$-reflexive $R$-modules
$\cdots\to G_1\to G_0\to M\to 0$.
It is similarly straightforward to show that $C$ is totally $C$-reflexive
because
\begin{gather*}
\ext^i_R(C,C)
\cong\begin{cases} 0 &\text{if $i\neq 0$} \\ R & \text{if $i=0$} \end{cases} \\
\ext^i_R(\hom_R(C,C),C)
\cong\ext^i_R(R,C)
\cong\begin{cases} 0 &\text{if $i\neq 0$} \\ C & \text{if $i=0$.} \end{cases}
\end{gather*}
Compare this with Facts~\ref{f0208} and~\ref{f0299}.
\end{fact}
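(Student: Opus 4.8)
The plan is to verify, in each case, the three conditions of Definition~\ref{d0202'}: finite generation, vanishing of the relevant higher Ext modules, and bijectivity of the biduality map $\smash{\ol\delta}^C_{(-)}$. Finite generation is clear, since $R^n$ and $C$ are finitely generated, and the displayed isomorphisms already handle the Ext vanishing together with the object-level identifications. Indeed, for a free module $R^n$ projectivity gives $\ext^i_R(R^n,C)=0$ for $i\geq 1$, the natural isomorphism $\hom_R(R^n,C)\cong C^n$ identifies the second round of Ext as $\ext^i_R(\hom_R(R^n,C),C)\cong\ext^i_R(C,C)^n$, and this vanishes for $i\geq 1$ by the defining property $\ext^i_R(C,C)=0$ of the semidualizing module $C$; for $G=C$ one has $\ext^i_R(C,C)=0$ directly, and the homothety isomorphism $\hom_R(C,C)\cong R$ identifies $\ext^i_R(\hom_R(C,C),C)\cong\ext^i_R(R,C)=0$ for $i\geq 1$. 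So the only substantive point is that the biduality map \emph{itself} is an isomorphism; the displayed computations only exhibit abstract isomorphisms between $G$ and its double $C$-dual.

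To handle the biduality map on a free module I would use that $\smash{\ol\delta}^C_{(-)}$ is a natural transformation that is additive in its argument, which reduces the claim to $G=R$. For $G=R$, identifying $\hom_R(R,C)$ with $C$ via evaluation at $1$ turns $\hom_R(\hom_R(R,C),C)$ into $\hom_R(C,C)$, and unwinding $\smash{\ol\delta}^C_R(r)(\phi)=\phi(r)$ shows that under these identifications $\smash{\ol\delta}^C_R$ is precisely the homothety map $\smash{\ol\chi}^R_C$, which is an isomorphism since $C$ is semidualizing; hence $\smash{\ol\delta}^C_{R^n}$ is an isomorphism as well. For $G=C$, identify $\hom_R(C,C)$ with $R$ via $\smash{\ol\chi}^R_C$ and then $\hom_R(\hom_R(C,C),C)$ with $\hom_R(R,C)\cong C$; chasing $\smash{\ol\delta}^C_C(x)(\psi)=\psi(x)$ through these identifications --- concretely, $\smash{\ol\delta}^C_C(x)$ sends $1_C$ to $x$ --- shows that $\smash{\ol\delta}^C_C$ corresponds to the identity map of $C$, so it too is an isomorphism. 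This establishes that free modules and $C$ are totally $C$-reflexive, and the resolution statement then follows at once: any finitely generated $R$-module $M$ admits a resolution $\cdots\to R^{n_1}\to R^{n_0}\to M\to 0$ by finitely generated free modules, each of which is totally $C$-reflexive.

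The only place where real care is needed is the biduality condition; everything else is routine bookkeeping with $\hom$ and $\ext$. The key device is that $\smash{\ol\delta}^C_{(-)}$ is natural and additive, so the free case collapses to $G=R$, where the map is the homothety map, while the case $G=C$ unwinds directly through the homothety identification. I expect no genuine obstacle here beyond keeping the successive identifications straight.
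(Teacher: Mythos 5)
Your proof is correct and follows essentially the same line the paper is sketching; you are simply supplying the biduality-map verification that the paper dismisses as ``straightforward.'' Identifying $\smash{\ol\delta}^C_R$ with the homothety map $\smash{\ol\chi}^R_C$, reducing the free case to $G=R$ by additivity/naturality, and unwinding $\smash{\ol\delta}^C_C$ to the identity through the homothety identification is exactly the intended content behind the displayed Ext isomorphisms, and the resolution statement follows as you say from the existence of finite free resolutions.
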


The next definition was introduced by Golod~\cite{golod:gdagpi}.

\begin{defn} \label{d0212'}
Let $C$ be a semidualizing $R$-module, 
and let $M$ be a  finitely generated $R$-module. 
If $M$ has a bounded resolution by totally
$C$-reflexive $R$-modules, then it has 
\emph{finite $\text{G}_C$-dimension}
and its
\emph{$\text{G}_C$-dimension},
denoted $\gcdim_R(M)$ is the length
of the shortest such resolution.
\end{defn}

The next fact contains the
ever-useful ``AB-formula'' for $\text{G}_C$-dimension and 
is followed by some of its consequences.

\begin{fact} \label{f0299'}
Let  $C$ be a semidualizing $R$-module.
If $B$ is an $R$-module of finite $\text{G}_C$-dimension,
then $\gcdim_R(B)=\depth(R)-\depth_R(B)$;
see~\cite[(3.14)]{christensen:scatac} or~\cite{golod:gdagpi}.
\label{f0217'}
When $B$ is  semidualizing, 
Facts~\ref{f0201'} and~\ref{f0299'} combine to show that
$B$ has finite $\text{G}_C$-dimension if and only if $B$ is totally $C$-reflexive.
\end{fact}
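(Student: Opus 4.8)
The plan is to treat the displayed formula as the known $\mathsf{G}_C$-analogue of the Auslander--Bridger formula and, given that, to derive the final biconditional in three lines. For the formula $\gcdim_R(B)=\depth(R)-\depth_R(B)$ I would cite \cite{christensen:scatac} and \cite{golod:gdagpi}; if one wants it self-contained, I would argue by induction on $n=\gcdim_R(B)$ (assuming $B\neq 0$). For $n\ge 1$ one chooses a short exact sequence $0\to K\to F\to B\to 0$ with $F$ finitely generated free, notes that $\gcdim_R(K)=n-1$ by the standard behavior of $\mathsf{G}_C$-dimension along such sequences, invokes the inductive hypothesis $\depth_R(K)=\depth(R)-(n-1)$, and reads off $\depth_R(B)=\depth(R)-n$ from the depth lemma (with the usual small-depth edge cases handled as in the references). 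So the real content is the base case $n=0$, i.e. the assertion that a nonzero totally $C$-reflexive module has $\depth_R(B)=\depth(R)$.

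For that base case I would proceed as follows. Applying $\Hom_R(-,C)$ to a (minimal) free resolution of $\Hom_R(B,C)$ and using the vanishing $\ext^{\ge 1}_R(\Hom_R(B,C),C)=0$ together with the biduality isomorphism $B\cong\Hom_R(\Hom_R(B,C),C)$, one embeds $B$ as the kernel of a map $C^{s_0}\to C^{s_1}$ whose matrix has all entries in $\m$. In particular $\ass_R(B)\subseteq\ass_R(C^{s_0})=\ass_R(C)=\ass(R)$ by Fact~\ref{f0201'}, and likewise $\ass_R(\Hom_R(B,C))\subseteq\ass_R(C)=\ass(R)$; hence a maximal $R$-regular sequence $\x=x_1,\dots,x_g$ (where $g=\depth(R)$) is regular on $R$, on $C$, on $B$, and on $\Hom_R(B,C)$. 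A standard base-change argument for $\Hom$ and $\ext$ (the analogue for total $C$-reflexivity of the statement in Fact~\ref{f0201'} that $C/xC$ is semidualizing, cf.\ \cite[(4.5)]{frankild:rrhffd}) then shows that $B/\x B$ is a nonzero totally $(C/\x C)$-reflexive module over $R'=R/\x R$, and $\depth(R')=0$. Over $R'$ the embedding described above shows $B/\x B=\ker(\,(C/\x C)^{s_0}\to(C/\x C)^{s_1}\,)$ with the differential having entries in the maximal ideal of $R'$, so $\soc((C/\x C)^{s_0})\subseteq B/\x B$; since $C/\x C$ is semidualizing over the depth-zero ring $R'$ we have $\soc(C/\x C)=\Hom_{R'}(k,C/\x C)\neq 0$ (Fact~\ref{f0201'}), and $s_0\ge 1$ because $B/\x B\neq 0$. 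Hence $\soc(B/\x B)\neq 0$, so $\depth_{R}(B/\x B)=0$, and therefore $\depth_R(B)=g=\depth(R)$.

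Granting the formula, the final assertion is immediate. If $B$ is totally $C$-reflexive then $\gcdim_R(B)=0$, which is finite. Conversely, suppose $B$ is semidualizing with $\gcdim_R(B)<\infty$. The Auslander--Bridger formula gives $\gcdim_R(B)=\depth(R)-\depth_R(B)$, and Fact~\ref{f0201'} (applied to the semidualizing module $B$) gives $\depth_R(B)=\depth(R)$; hence $\gcdim_R(B)=0$, which means precisely that $B$ itself is totally $C$-reflexive.

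The main obstacle is the base case of the formula, specifically the upper bound $\depth_R(B)\le\depth(R)$ for totally $C$-reflexive $B$ and the verification that reduction modulo a regular sequence preserves total $C$-reflexivity; both are classical, so in this survey I would lean on \cite{christensen:scatac,golod:gdagpi} for the formula itself and present only the short deduction of the biconditional in detail.
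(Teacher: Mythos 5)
Your deduction of the biconditional is exactly what the paper intends: the forward direction is immediate since a totally $C$-reflexive module has $\gcdim_R(B)=0<\infty$, and the converse combines the cited AB-formula with $\depth_R(B)=\depth(R)$ from Fact~\ref{f0201'} to force $\gcdim_R(B)=0$, i.e., $B$ totally $C$-reflexive. Your optional self-contained sketch of the AB-formula goes beyond what the paper supplies (it defers entirely to \cite{christensen:scatac} and \cite{golod:gdagpi}), and you rightly conclude that leaning on those references is the appropriate move in a survey of this kind.
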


\begin{fact} \label{f0205'}
Let $C$ be a semidualizing $R$-module. If $\pd_R(C)<\infty$, then $C\cong R$
Indeed, using Fact~\ref{f0201'}, the Auslander-Buchsbaum formula
shows that $C$ must be free, and the isomorphism $\Hom_R(C,C)\cong R$
implies that $C$ is free of rank 1.
(Note that this depends on the assumption that $R$ is local;
see Remark~\ref{d9999}.)
It follows that, if $C$ is a non-free semidualizing $R$-module,
then the Betti number $\beta_i^R(C)$ is positive for each integer $i\geq 0$.
Compare this with Fact~\ref{f0205} and Lemma~\ref{l0201}.
Questions about the Betti numbers of semidualizing modules
akin to those in Question~\ref{q0001} are contained in~\ref{q}.
\end{fact}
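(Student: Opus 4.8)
The plan is to combine Fact~\ref{f0201'} with the Auslander--Buchsbaum formula. First I would assume $\pd_R(C)<\infty$. Fact~\ref{f0201'} supplies the equality $\depth_R(C)=\depth(R)$, so the Auslander--Buchsbaum formula reads
\[
\pd_R(C)=\depth(R)-\depth_R(C)=0.
\]
Hence $C$ is projective, and, $R$ being local, $C$ is free, say $C\cong R^n$; moreover $n\geq 1$, since $\ann_R(C)=0\neq R$ forces $C\neq 0$ (again Fact~\ref{f0201'}). Next I would invoke the semidualizing isomorphism $R\cong\Hom_R(C,C)$: computing the right-hand side from $C\cong R^n$ gives $\Hom_R(C,C)\cong R^{n^2}$, so $R\cong R^{n^2}$, and comparing minimal numbers of generators (equivalently, applying $-\otimes_R k$ and comparing $k$-vector-space dimensions) forces $n^2=1$, whence $n=1$ and $C\cong R$. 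Alternatively, once $C$ is known to be free one may simply note that a free module is cyclic precisely when it has rank one and apply the cyclic case of Fact~\ref{f0201'}.

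For the closing assertion I would argue by contraposition. If $C$ is a non-free semidualizing module, then $C\not\cong R$, so the statement just proved gives $\pd_R(C)=\infty$. It then remains to pass from ``$\pd_R(C)=\infty$'' to ``$\beta_i^R(C)>0$ for every $i\geq 0$''; this is a standard property of minimal free resolutions over a local ring: if $\beta_n^R(C)=0$ for some $n$, then the $n$-th term of a minimal free resolution of $C$ vanishes, the resolution truncates to a finite free resolution, and $\pd_R(C)<\infty$, a contradiction (the case $n=0$ being immediate since $C\neq 0$).

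The argument is essentially routine, so I do not anticipate a serious obstacle; the two points I would be careful about are (i) that the Auslander--Buchsbaum step requires the \emph{equality} $\depth_R(C)=\depth(R)$ from Fact~\ref{f0201'}, not merely an inequality, and (ii) that deducing ``$C$ is free'' from ``$\pd_R(C)=0$'' uses that $R$ is local --- over a general commutative ring $C$ could be projective of rank one without being free, which is exactly the $\Pic(S)\subseteq\s_0(S)$ phenomenon recorded in Remark~\ref{d9999}.
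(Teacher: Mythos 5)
Your proposal is correct and follows essentially the same route as the paper: invoke $\depth_R(C)=\depth(R)$ from Fact~\ref{f0201'}, apply Auslander--Buchsbaum to get $\pd_R(C)=0$ and hence freeness over the local ring $R$, then use $\Hom_R(C,C)\cong R$ to pin the rank down to $1$, and obtain the Betti-number statement by contraposition via the standard fact that a minimal free resolution with a vanishing term truncates. Your version simply spells out the details (the $R^{n^2}\cong R$ rank comparison, the $n=0$ exclusion via $\ann_R(C)=0$) that the paper leaves implicit.
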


The next facts contain some  fundamental properties of this notion of reflexivity.

\begin{fact} \label{f0210'}
Let $C$ be a semidualizing $R$-module.
A finitely generated $R$-module $G$ is totally $C$-reflexive
if and only if the completion $\widehat G$ is totally $\widehat C$-reflexive.
The essential point of the proof is that there are isomorphisms
\begin{align*}
\ext^i_{\rhat}(\ghat,\chat)
&\cong\rhat\otimes_R\ext^i_R(G,C)\\
\ext^i_{\rhat}(\hom_{\rhat}(\ghat,\chat),\chat)
&\cong\ext^i_{\rhat}(\rhat\otimes_R\hom_R(G,C),\rhat\otimes_RC)\\
&\cong\rhat\otimes_R\ext^i_R(\hom_R(G,C),C).
\end{align*}
Furthermore, a finitely generated $R$-module $M$ 
has finite $\text{G}_C$-dimension if and only if
$\widehat M$ has finite $\text{G}_{\comp C}$-dimension.
See~\cite[(5.10)]{christensen:scatac} or~\cite{golod:gdagpi}.
Compare this with Fact~\ref{f0210}.
\end{fact}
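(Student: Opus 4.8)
The plan is to reduce everything to the faithful flatness of the completion map $R\to\rhat$ together with the standard flat base-change isomorphism for $\ext$. First I would record that, for finitely generated $R$-modules $G$ and $C$, the natural map
$$\rhat\otimes_R\Hom_R(G,C)\longrightarrow\Hom_{\rhat}(\ghat,\chat)$$
is an isomorphism (the usual compatibility of $\Hom$ with flat base change, valid because $G$ is finitely presented), and more generally that $\rhat\otimes_R\ext^i_R(G,C)\cong\ext^i_{\rhat}(\ghat,\chat)$ for all $i$: take a resolution $F_\bullet\to G$ by finitely generated free modules, note that $\widehat{F_\bullet}\to\ghat$ is such a resolution over $\rhat$, apply $\Hom_R(-,C)$ versus $\Hom_{\rhat}(-,\chat)$, and use the previous isomorphism together with exactness of $\rhat\otimes_R-$. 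The displayed isomorphisms in the statement are exactly these, once one also applies them with $\hom_R(G,C)$ (which is again finitely generated) in place of $G$, and uses $\rhat\otimes_RC\cong\chat$.

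For the first equivalence I would argue as follows. Since $\rhat$ is faithfully flat over $R$ and the modules $\ext^i_R(G,C)$ and $\ext^i_R(\hom_R(G,C),C)$ are finitely generated, the base-change isomorphisms show that the vanishing in condition \eqref{f0208c'} holds for $(G,C)$ over $R$ if and only if it holds for $(\ghat,\chat)$ over $\rhat$. It remains to compare biduality maps: a diagram chase shows that, under the identification above, the completion $\rhat\otimes_R\smash{\ol\delta}^C_G$ is carried to $\smash{\ol\delta}^{\chat}_{\ghat}$. Since a homomorphism of finitely generated $R$-modules is an isomorphism if and only if its completion is (faithfully flat descent), condition \eqref{f0208b'} holds over $R$ if and only if it holds over $\rhat$; and $\ghat$ is automatically finitely generated over $\rhat$. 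This gives $G$ totally $C$-reflexive $\iff$ $\ghat$ totally $\chat$-reflexive.

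For the statement about $\text{G}_C$-dimension, the forward direction is immediate: if $\cdots\to G_1\to G_0\to M\to 0$ is a bounded resolution with each $G_j$ totally $C$-reflexive, then applying the exact functor $\rhat\otimes_R-$ produces a bounded resolution $\cdots\to\widehat{G_1}\to\widehat{G_0}\to\widehat M\to 0$ with each $\widehat{G_j}$ totally $\chat$-reflexive by the first part, so $\widehat M$ has finite $\text{G}_{\chat}$-dimension. For the converse I would invoke the syzygy characterization of finite $\text{G}_C$-dimension (from \cite{christensen:scatac} or \cite{golod:gdagpi}): fix a resolution $F_\bullet\to M$ by finitely generated free modules with $n$-th syzygy $\Omega^nM$; then $\widehat{F_\bullet}\to\widehat M$ is such a resolution over $\rhat$ with $n$-th syzygy $\widehat{\Omega^nM}$. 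If $\gkdim{\chat}_{\rhat}(\widehat M)\le n$, then $\widehat{\Omega^nM}$ is totally $\chat$-reflexive, hence $\Omega^nM$ is totally $C$-reflexive by the first part, whence $\gcdim_R(M)\le n<\infty$.

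The main obstacle I anticipate is the bookkeeping for the biduality map: checking that the canonical base-change isomorphisms intertwine $\rhat\otimes_R\smash{\ol\delta}^C_G$ with $\smash{\ol\delta}^{\chat}_{\ghat}$ requires tracking several natural transformations, though each step is routine. A secondary minor point is the syzygy characterization used in the converse of the last statement, which can simply be cited from the references already invoked in the Fact.
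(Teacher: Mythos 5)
Your proof is correct and follows exactly the approach the paper indicates: establish the flat base-change isomorphisms for $\Hom$ and $\ext$, use faithful flatness of $\rhat$ to transfer the vanishing and biduality conditions, and reduce the $\text{G}_C$-dimension equivalence to the totally reflexive case via syzygies. The only piece the paper leaves implicit that you correctly flag and supply is the compatibility of the base-change isomorphism with the two biduality maps, together with the syzygy characterization of finite $\text{G}_C$-dimension that the cited references provide.
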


\begin{fact} \label{f0211'}
Let $C$ be a semidualizing $R$-module. If $M$ is a
finitely generated $R$-module of finite projective dimension, then $M$ 
has finite $\text{G}_C$-dimension by Fact~\ref{f0208'}.

Let $D$ be a dualizing $R$-module.
If $M$ is a 
maximal Cohen-Macaulay $R$-module, then $M$ is totally $D$-reflexive
by~\cite[(3.3.10)]{bruns:cmr}. The converse holds 
because of the AB-formula~\ref{f0299'}.
It follows that every finitely generated $R$-module $N$ has finite $\text{G}_D$-dimension,
as the fact that $R$ is Cohen-Macaulay 
(c.f.\ Fact~\ref{f0215'})
implies that some syzygy of $N$ is 
maximal Cohen-Macaulay.
Compare this with Fact~\ref{f0299}.
\end{fact}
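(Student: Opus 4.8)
The plan is to establish the Fact's four assertions in turn, each time reducing to material already recorded. For the first assertion, note that since $R$ is local a finitely generated $R$-module $M$ with $\pd_R(M)<\infty$ admits a bounded resolution $0\to F_n\to\cdots\to F_0\to M\to 0$ by finitely generated free $R$-modules; by Fact~\ref{f0208'} every $F_i$ is totally $C$-reflexive, so this is a bounded resolution of $M$ by totally $C$-reflexive modules, and Definition~\ref{d0212'} yields $\gcdim_R(M)\leq\pd_R(M)<\infty$.

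For the second assertion, let $D$ be a dualizing $R$-module and $M$ a maximal Cohen-Macaulay $R$-module. I would invoke the classical duality for canonical modules recorded in \cite[(3.3.10)]{bruns:cmr}: one has $\ext^i_R(M,D)=0$ for all $i\geq 1$, the module $\hom_R(M,D)$ is again maximal Cohen-Macaulay, and the biduality map $\smash{\ol\delta}^D_M\colon M\to\hom_R(\hom_R(M,D),D)$ is an isomorphism. Feeding the maximal Cohen-Macaulay module $\hom_R(M,D)$ back into the vanishing statement gives $\ext^i_R(\hom_R(M,D),D)=0$ for $i\geq 1$ as well, so all three conditions of Definition~\ref{d0202'} hold and $M$ is totally $D$-reflexive.

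The converse runs through the AB-formula. If a nonzero $M$ is totally $D$-reflexive it has finite $\text{G}_D$-dimension with $\gkdim{D}_R(M)\leq 0$ (the module $M$ is its own bounded totally $D$-reflexive resolution), so Fact~\ref{f0299'} gives $0\geq\gkdim{D}_R(M)=\depth(R)-\depth_R(M)$, i.e.\ $\depth_R(M)\geq\depth(R)$; since $R$ is Cohen-Macaulay by Fact~\ref{f0215'} and always $\depth_R(M)\leq\dim_R(M)\leq\dim(R)=\depth(R)$, equality holds and $M$ is maximal Cohen-Macaulay (the zero module being trivially both). For the last assertion, Fact~\ref{f0215'} again gives that $R$ is Cohen-Macaulay; put $d=\dim R$. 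Given a finitely generated $N$, resolve it by finitely generated free modules and let $N_j$ denote the $j$th syzygy. The depth inequality applied to $0\to N_{j+1}\to F_j\to N_j\to 0$ reads $\depth_R(N_{j+1})\geq\min\{\depth(R),\depth_R(N_j)+1\}$, so $\depth_R(N_d)\geq\depth(R)$ and therefore $\depth_R(N_d)=\dim R$, i.e.\ $N_d$ is maximal Cohen-Macaulay; by the second assertion $N_d$ is totally $D$-reflexive. Truncating the free resolution at stage $d$ produces the bounded totally $D$-reflexive resolution $0\to N_d\to F_{d-1}\to\cdots\to F_0\to N\to 0$, whence $\gkdim{D}_R(N)\leq d<\infty$.

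None of these steps is an obstacle once the cited background is granted; the only genuine external input is the canonical-module duality \cite[(3.3.10)]{bruns:cmr} underlying the second assertion. Proving that from scratch would require redeveloping the exactness of $\hom_R(-,D)$ on maximal Cohen-Macaulay modules together with the reflexivity isomorphism, which is the technical core of the theory of canonical modules, so the appropriate course here is simply to cite it; the AB-formula (Fact~\ref{f0299'}) powering the converse is likewise quoted rather than reproved.
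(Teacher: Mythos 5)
Your proposal is correct and follows exactly the route the paper indicates: Fact~\ref{f0208'} for the finite-projective-dimension case, the canonical-module duality \cite[(3.3.10)]{bruns:cmr} for the maximal Cohen-Macaulay direction, the AB-formula of Fact~\ref{f0299'} for the converse, and the syzygy/depth argument (with $R$ Cohen-Macaulay via Fact~\ref{f0215'}) for finiteness of $\text{G}_D$-dimension of arbitrary finitely generated modules. You have merely unpacked the citations the paper leaves implicit, and done so accurately, including the small but necessary observations that truncation at the $d$th syzygy yields a bounded totally $D$-reflexive resolution and that $\depth_R(M)\leq\dim R=\depth R$ closes the converse.
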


Here is the ordering on $\s_0(R)$ that gives  the chains discussed in the introduction.

\begin{defn} \label{d0204'}
Given two classes $\cl B,\cl C\in\s_0(R)$, we write
$\cl B\tri\cl C$ when $C$ is totally $B$-reflexive, that is, when $C$ has finite
$\text{G}_B$-dimension; see Fact~\ref{f0217'}.
We write
$\cl B\trin\cl C$ when $\cl B\tri\cl C$ and $\cl B\neq\cl C$.
\end{defn}

The next facts contain some fundamental properties of this ordering.

\begin{fact} \label{f0216'}
Let $C$ be a semidualizing $R$-module.
Fact~\ref{f0211'} implies that $\cl C\tri\cl R$ and,
if $D$ is a dualizing $R$-module, then $\cl D\tri\cl C$.

Fact~\ref{f0210'} says that 
$\cl B\tri\cl C$ in $\s_0(R)$ if and only if $\cl{\bhat}\tri\cl{\chat}$ in $\s_0(\rhat)$;
also 
$\cl B\trin\cl C$ in $\s_0(R)$ if and only if $\cl{\bhat}\trin\cl{\chat}$ in $\s_0(\rhat)$
by Fact~\ref{f0204'}.
In other words, the injection $\s_0(R)\hookrightarrow\s_0(\rhat)$
perfectly respects the orderings on these two sets.
Compare this with Fact~\ref{f0216}.
\end{fact}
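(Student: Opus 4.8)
The plan is to verify the four assertions of the Fact one at a time, in each case by unwinding Definition~\ref{d0204'} and quoting one of the preceding Facts; nothing deeper is needed.

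First I would dispose of $\cl C\tri\cl R$ and, when a dualizing module $D$ exists, $\cl D\tri\cl C$. By Definition~\ref{d0204'}, the relation $\cl C\tri\cl R$ asserts that $R$ is totally $C$-reflexive; since $R$ is itself semidualizing, Fact~\ref{f0217'} lets me replace this by the requirement that $R$ have finite $\text{G}_C$-dimension, and that holds because $\pd_R(R)=0$, by Fact~\ref{f0211'}. (Alternatively, $R$ is finitely generated and free, so Fact~\ref{f0208'} gives this at once.) For $\cl D\tri\cl C$: the existence of $D$ forces $R$ to be Cohen--Macaulay (Fact~\ref{f0215'}), whence Fact~\ref{f0211'} shows the finitely generated module $C$ has finite $\text{G}_D$-dimension, and Fact~\ref{f0217'} --- applied with $C$ semidualizing --- upgrades this to ``$C$ is totally $D$-reflexive'', i.e.\ $\cl D\tri\cl C$.

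Next I would handle the two equivalences under completion. The first step is to observe that $\cl{\bhat}$ and $\cl{\chat}$ genuinely belong to $\s_0(\rhat)$, so that the ordering there is defined on the relevant classes; this is Fact~\ref{f0204'}. Then, unwinding Definition~\ref{d0204'} via Fact~\ref{f0217'}, the relation $\cl B\tri\cl C$ says exactly that $C$ has finite $\text{G}_B$-dimension, and $\cl{\bhat}\tri\cl{\chat}$ says exactly that $\chat$ has finite $\text{G}_{\bhat}$-dimension; Fact~\ref{f0210'}, used with semidualizing module $B$ and finitely generated module $C$, is precisely the equivalence of these two conditions. For the strict relation, I would combine this with the injectivity of the completion map $\s_0(R)\hookrightarrow\s_0(\rhat)$ --- also part of Fact~\ref{f0204'} --- which yields $\cl B=\cl C\iff\cl{\bhat}=\cl{\chat}$; conjoining the two equivalences gives $\cl B\trin\cl C\iff\cl{\bhat}\trin\cl{\chat}$, and the final sentence of the Fact is merely the assertion that both $\tri$ and $\trin$ are reflected by the injection.

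I do not expect a real obstacle here: the whole statement is bookkeeping on top of Facts~\ref{f0204'}, \ref{f0208'}, \ref{f0211'}, \ref{f0215'}, \ref{f0217'} and~\ref{f0210'}. The only points demanding any care are (i) that completion sends semidualizing modules to semidualizing modules, so that $\s_0(\rhat)$ and its ordering are defined on the classes in play, and (ii) that each ``if and only if'' is read in both directions; both are exactly what Facts~\ref{f0204'} and~\ref{f0210'} provide, the substantive content being the base-change isomorphisms $\ext^i_{\rhat}(\ghat,\chat)\cong\rhat\otimes_R\ext^i_R(G,C)$ recorded in those Facts.
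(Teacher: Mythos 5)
Your proof is correct and follows the same route the paper indicates: the first two relations come from Fact~\ref{f0211'} (together with the equivalence in Fact~\ref{f0217'}), the completion compatibility for $\tri$ from Fact~\ref{f0210'}, and the strict version from that plus the injectivity recorded in Fact~\ref{f0204'}. The only thing you add is the careful unwinding, which is exactly right.
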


\begin{fact} \label{f0218'}
Let $B$ and $C$ be semidualizing $R$-modules such that $C$ is 
totally $B$-reflexive,
that is, such that $\cl B\tri\cl C$. By definition, this implies that 
$\ext^i_R(C,B)=0$ for all $i\geq 1$. In addition,
the $R$-module $\hom_R(C,B)$ is
semidualizing and totally $B$-reflexive; 
see~\cite[(2.11)]{christensen:scatac}.
Compare this with Fact~\ref{f0218}.
\end{fact}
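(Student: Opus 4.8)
For completeness, here is a sketch. The plan is to show that $\hom_R(C,B)$ inherits both properties by transporting structure along the contravariant duality functor $(-)^{\dagger}=\hom_R(-,B)$. Write $\da C=\hom_R(C,B)$, and recall that the biduality morphism $\ol\delta^B_M\colon M\to\hom_R(\da M,B)$ is natural in $M$ and obeys the triangle identity $(\ol\delta^B_M)^{\dagger}\circ\ol\delta^B_{\da M}=\id_{\da M}$, and that the hypothesis $\cl B\tri\cl C$ says exactly that $C$ is finitely generated, that $\ol\delta^B_C$ is an isomorphism, and that $\ext^i_R(C,B)=0=\ext^i_R(\da C,B)$ for all $i\geq 1$; in particular $\hom_R(\da C,B)\cong C$. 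First I would dispatch the easy half, that $\da C$ is totally $B$-reflexive. It is finitely generated over the noetherian ring $R$; the vanishing $\ext^i_R(\da C,B)=0$ for $i\geq 1$ is part of the hypothesis, and $\ext^i_R(\hom_R(\da C,B),B)\cong\ext^i_R(C,B)=0$ for $i\geq 1$ since $\hom_R(\da C,B)\cong C$; and $\ol\delta^B_{\da C}$ is an isomorphism because $\ol\delta^B_C$ is, the triangle identity exhibiting $\ol\delta^B_{\da C}$ as a split monomorphism whose retraction $(\ol\delta^B_C)^{\dagger}$ is invertible.

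For the harder half, that $\da C$ is semidualizing, I would prove in one stroke that the derived homothety morphism $R\to\rhom_R(\da C,\da C)$ is an isomorphism in $\catd(R)$: passing to cohomology then shows both that the ordinary homothety map $\ol\chi^R_{\da C}\colon R\to\hom_R(\da C,\da C)$ is bijective (in degree $0$) and that $\ext^i_R(\da C,\da C)=0$ for $i\geq 1$ (in positive degrees). Here the vanishing $\ext^i_R(C,B)=0$ for $i\geq 1$ gives $\rhom_R(C,B)\simeq\da C$, and the vanishing $\ext^i_R(\da C,B)=0$ for $i\geq 1$ together with $\ol\delta^B_C$ being an isomorphism gives $\rhom_R(\da C,B)\simeq\hom_R(\da C,B)\simeq C$; repeated use of Hom-tensor adjointness then yields
\begin{align*}
\rhom_R(\da C,\da C)
&\simeq\rhom_R\bigl(\da C,\rhom_R(C,B)\bigr)
\simeq\rhom_R\bigl(\da C\lotimes_R C,\,B\bigr)\\
&\simeq\rhom_R\bigl(C,\rhom_R(\da C,B)\bigr)
\simeq\rhom_R(C,C)
\simeq R,
\end{align*}
the last isomorphism because $C$ is semidualizing.

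The step I expect to be the main obstacle is not any individual isomorphism above --- each is either a standard adjunction or a consequence of the vanishing of positive $\ext$ --- but the verification that the composite isomorphism $\rhom_R(\da C,\da C)\simeq R$ is the one induced by the homothety morphism; equivalently, that the adjunction isomorphisms respect the canonical algebra structures on the derived Hom complexes involved. This is routine naturality, but it is the only delicate point. If one prefers to sidestep $\catd(R)$ for the homothety statement itself, one can instead observe that $(-)^{\dagger}$ induces a bijective ring homomorphism $\End_R(C)\to\End_R(\da C)$ --- a priori composition-reversing, but harmless because $\End_R(C)\cong R$ is commutative --- which carries $\ol\chi^R_C$ to $\ol\chi^R_{\da C}$, so that $\ol\chi^R_{\da C}$ is bijective because $\ol\chi^R_C$ is; the vanishing $\ext^i_R(\da C,\da C)=0$ for $i\geq 1$ still rests on the derived computation (or an equivalent spectral sequence). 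Full details appear in~\cite[(2.11)]{christensen:scatac}.
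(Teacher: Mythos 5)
Your argument is correct and is essentially the one the paper has in mind: the paper's Fact~\ref{f0218} (the complex version, to which Fact~\ref{f0218'} is explicitly compared) sketches exactly your chain of isomorphisms $\rhom_R(\da C,\da C)\simeq\rhom_R(\da C\lotimes_RC,B)\simeq\rhom_R(C,\rhom_R(\da C,B))\simeq\rhom_R(C,C)\simeq R$ via Hom-tensor adjointness and reflexivity, citing~\cite[(2.11)]{christensen:scatac} for the details you fill in (the compatibility with the homothety morphism and the verification that $\da C$ is totally $B$-reflexive). Your triangle-identity argument for $\ol\delta^B_{\da C}$ and the $\End_R(C)\to\End_R(\da C)$ remark are correct supplementary details rather than a different route.
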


Here is the key to the proofs of our main results when $R$ is Cohen-Macaulay.

\begin{fact} \label{f0220'}
Consider a chain 
$\cl{C^0}\tri\cl{C^1}\tri\cdots\tri\cl{C^d}$
in $\s_0(R)$. Gerko~\cite[(3.3)]{gerko:sdc} shows that there is an isomorphism
$$C^0\cong\hom_R(C^1,C^0)\otimes_R\cdots\otimes_R\hom_R(C^d,C^{d-1})\otimes_RC^d.$$
(Note that Fact~\ref{f0218'} implies that each factor in the tensor product is a semidualizing
$R$-module.)
The proof is by induction on $d$, with the case $d=1$ being the most important:
The natural evaluation homomorphism
$\xi\colon\hom_R(C^1,C^0)\otimes_RC^1\to C^0$
given by $\phi\otimes x\mapsto \phi(x)$
fits into the following commutative diagram
$$\xymatrix{
R\ar[rr]^-{\smash{\ol\chi}^R_{\hom_R(C^1,C^0)}}_-{\cong} \ar[d]_{\smash{\ol\chi}^R_{C^0}}^{\cong}
&&\hom_R(\hom_R(C^1,C^0),\hom_R(C^1,C^0)) \\
\hom_R(C^0,C^0)\ar[rr]^-{\hom_R(\xi,C^0)}
&&\hom_R(\hom_R(C^1,C^0)\otimes_RC^1,C^0). \ar[u]_{\cong}
}$$
The unspecified isomorphism is Hom-tensor adjointness.
The homomorphisms 
$\smash{\ol\chi}^R_{C^0}$ and $\smash{\ol\chi}^R_{\hom_R(C^1,C^0)}$ are isomorphisms
because $C^0$ and $\hom_R(C^1,C^0)$ are semidualizing;
see Fact~\ref{f0218'}. 
Hence, the homomorphism $\hom_R(\xi,C^0)$ is an isomorphism.
Since $C^0$ is semidualizing, it follows that
$\xi$ is an isomorphism; see~\cite[(A.8.11)]{christensen:gd}. 

Moreover, if $F^i$ is a free resolution of $\hom_R(C^i,C^{i-1})$ for 
$i=1,\ldots,d$ and $F^{d+1}$ is a projective resolution of $C^d$, then the tensor
product from Definition~\ref{d0105}
$$F^1\otimes_R\cdots\otimes_RF^d\otimes_RF^{d+1}$$ 
is a 
free resolution of $C^0$.
Compare this with Fact~\ref{f0220}.
\end{fact}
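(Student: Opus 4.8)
The plan is to induct on $d$, reduce everything to the case $d=1$, dispose of that case by a diagram chase with the homothety and evaluation homomorphisms, and then upgrade the resulting module isomorphism to an isomorphism in $\catd(R)$ in order to get the statement about resolutions. The first thing to note is that, since $\cl{C^{i-1}}\tri\cl{C^i}$ for each $i$, Fact~\ref{f0218'} tells us that every module $\hom_R(C^i,C^{i-1})$ occurring in the asserted formula is semidualizing and totally $C^{i-1}$-reflexive; in particular the homothety map $\ol\chi^R_{\hom_R(C^i,C^{i-1})}$ is an isomorphism and $\ext^{\geq 1}_R(C^i,C^{i-1})=0$. These are the only properties of the factors that will be used.

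For the base case $d=1$ I would introduce the evaluation map $\xi\colon\hom_R(C^1,C^0)\otimes_R C^1\to C^0$ given by $\phi\otimes x\mapsto\phi(x)$, and check by unravelling definitions that the square relating $\hom_R(\xi,C^0)$, the homothety map $\ol\chi^R_{C^0}$, the homothety map $\ol\chi^R_{\hom_R(C^1,C^0)}$, and Hom-tensor adjunction commutes. Since $C^0$ and $\hom_R(C^1,C^0)$ are semidualizing, the two homothety maps are isomorphisms, and Hom-tensor adjunction always is; hence $\hom_R(\xi,C^0)$ is an isomorphism. Because $C^0$ is semidualizing and $\hom_R(C^1,C^0)\otimes_R C^1$ is finitely generated, the cancellation principle \cite[(A.8.11)]{christensen:gd} then forces $\xi$ itself to be an isomorphism, giving $C^0\cong\hom_R(C^1,C^0)\otimes_R C^1$.

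For $d\geq 2$ I would apply the case $d=1$ to $\cl{C^0}\tri\cl{C^1}$ and the inductive hypothesis to the tail $\cl{C^1}\tri\cdots\tri\cl{C^d}$, then substitute. For the assertion about resolutions, the plan is to show that the displayed isomorphism is already an isomorphism in $\catd(R)$, i.e.\ that $\hom_R(C^1,C^0)\lotimes_R\cdots\lotimes_R\hom_R(C^d,C^{d-1})\lotimes_R C^d$ is represented by the module $C^0$ in degree $0$. By associativity of $\lotimes$ this reduces to the single claim that $\hom_R(C^i,C^{i-1})\lotimes_R C^i\simeq C^{i-1}$ in $\catd(R)$ for each $i$: grouping the last two factors and iterating then collapses the whole product to $C^0$. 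Granting this, the complex $F^1\otimes_R\cdots\otimes_R F^{d+1}$, being a bounded-below complex of free modules whose formation computes precisely that derived tensor product, is quasi-isomorphic to $C^0$ concentrated in degree $0$, hence is a free resolution of $C^0$.

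The main obstacle is exactly the claim $\hom_R(C^i,C^{i-1})\lotimes_R C^i\simeq C^{i-1}$ in $\catd(R)$, equivalently the vanishing of $\tor^R_{\geq 1}\!\bigl(\hom_R(C^i,C^{i-1}),C^i\bigr)$; the diagram in the $d=1$ step only controls $\tor^R_0$. To obtain the higher vanishing I would use that $\ext^{\geq 1}_R(C^i,C^{i-1})=0$ gives $\rhom_R(C^i,C^{i-1})\simeq\hom_R(C^i,C^{i-1})$, and that total $C^{i-1}$-reflexivity of $C^i$ places $C^i$ in the appropriate reflexivity class relative to $C^{i-1}$, so that the derived evaluation morphism $\rhom_R(C^i,C^{i-1})\lotimes_R C^i\to C^{i-1}$ is a quasi-isomorphism. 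This is the step where the full strength of total reflexivity (the biduality isomorphism together with the vanishing of $\ext_R(\hom_R(C^i,C^{i-1}),C^{i-1})$) is genuinely needed; it can be extracted from Gerko's \cite[(3.3)]{gerko:sdc} or from Christensen's treatment of semidualizing complexes \cite{christensen:scatac}.
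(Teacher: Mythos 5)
Your proposal follows the same route as the paper: the same induction on $d$, the same commutative diagram relating the homothety morphisms, Hom-tensor adjointness, and the evaluation map $\xi$ for the case $d=1$, and the same appeal to \cite[(A.8.11)]{christensen:gd} to conclude that $\xi$ is an isomorphism. For the resolution claim (which the paper states without further argument), your reduction to showing $\hom_R(C^i,C^{i-1})\lotimes_R C^i\simeq C^{i-1}$ in $\catd(R)$ is the right idea, and the cleanest way to establish that isomorphism is to note that it is exactly Fact~\ref{f0220} applied to the chain $\cl{C^{i-1}}\tri\cl{C^i}$, whose proof runs the very same diagram in $\catd(R)$ with $\rhom$ and $\lotimes$ in place of $\hom$ and $\otimes$, rather than invoking biduality and $\ext$-vanishing directly.
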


The final fact of this section demonstrates the utility of~\ref{f0220'}.
It compares to~\ref{f1001}.

\begin{fact} \label{f1001'}
The ordering on $\s_0(R)$ is reflexive 
by Fact~\ref{f0208'}.
Also, it is antisymmetric by~\cite[(5.3)]{takahashi:hiatsb}. 
The essential point in the proof of antisymmetry comes from Fact~\ref{f0220'}.
Indeed, if $\cl B\tri\cl C\tri\cl B$, then
$$B\cong \Hom_R(C,B)\otimes_R\Hom_R(B,C)\otimes_RB.$$
It follows that there is an equality of Betti numbers
$$\beta^R_0(B)=\beta^R_0(\Hom_R(C,B))\beta^R_0(\Hom_R(B,C))\beta^R_0(B)$$
and so $\Hom_R(C,B)$ and $\Hom_R(B,C)$ are cyclic. Fact~\ref{f0218'}
implies that $\Hom_R(C,B)$ is semidualizing, so we have $\Hom_R(C,B)\cong R$
by Fact~\ref{f0201'}.  This yields the second isomorphism in the next sequence
$$C\cong\hom_R(\Hom_R(C,B),B)\cong \hom_R(R,B)\cong B.$$
The first isomorphism follows from the fact that $C$ is totally $B$-reflexive,
and the third isomorphism is standard. We conclude that $\cl C=\cl B$.
\end{fact}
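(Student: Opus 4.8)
The plan is to treat reflexivity and antisymmetry separately, leaning on the structural facts already assembled. For reflexivity, the assertion $\cl C\tri\cl C$ unwinds by Definition~\ref{d0204'} and Fact~\ref{f0217'} to the statement that $C$ is totally $C$-reflexive, and this is precisely the second half of Fact~\ref{f0208'}; nothing further is needed.

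For antisymmetry, suppose $\cl B\tri\cl C$ and $\cl C\tri\cl B$ with $B,C$ semidualizing. The first step is to apply Fact~\ref{f0220'} to the chain $\cl B\tri\cl C\tri\cl B$ (taking $C^0=B$, $C^1=C$, $C^2=B$, i.e.\ $d=2$), which yields $B\cong\Hom_R(C,B)\otimes_R\Hom_R(B,C)\otimes_R B$; Fact~\ref{f0218'} is what guarantees that each tensor factor is again a semidualizing $R$-module, so that Gerko's factorization applies. The second step is to pass to minimal numbers of generators: over a local ring $\beta^R_0$ is multiplicative on tensor products, since $\beta^R_0(M)=\dim_k(M\otimes_R k)$ and $(M\otimes_R N)\otimes_R k\cong(M\otimes_R k)\otimes_k(N\otimes_R k)$. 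Hence the displayed isomorphism forces the equality of positive integers $\beta^R_0(B)=\beta^R_0(\Hom_R(C,B))\,\beta^R_0(\Hom_R(B,C))\,\beta^R_0(B)$, so $\beta^R_0(\Hom_R(C,B))=1$; that is, $\Hom_R(C,B)$ is cyclic.

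To conclude, observe that $\Hom_R(C,B)$ is semidualizing by Fact~\ref{f0218'}, and a cyclic semidualizing $R$-module is isomorphic to $R$ by Fact~\ref{f0201'}; therefore $\Hom_R(C,B)\cong R$. Since $\cl B\tri\cl C$ means $C$ is totally $B$-reflexive, the biduality isomorphism gives $C\cong\Hom_R(\Hom_R(C,B),B)\cong\Hom_R(R,B)\cong B$, whence $\cl B=\cl C$. (Alternatively, one could simply cite Takahashi's theorem \cite[(5.3)]{takahashi:hiatsb}, but the argument above keeps Section~\ref{sec12} self-contained.)

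The step carrying all the weight — and hence the main obstacle — is the invocation of Fact~\ref{f0220'}: once the factorization $B\cong\Hom_R(C,B)\otimes_R\Hom_R(B,C)\otimes_R B$ is in hand, everything downstream is elementary bookkeeping with Betti numbers. That factorization is the genuinely nontrivial input, resting on Gerko's evaluation-homomorphism argument together with the stability of the semidualizing property under the functor $\Hom_R(-,B)$ recorded in Fact~\ref{f0218'}.
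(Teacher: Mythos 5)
Your proof is correct and follows essentially the same route as the paper: apply Gerko's factorization (Fact~\ref{f0220'}) to the chain $\cl B\tri\cl C\tri\cl B$, compare $\beta^R_0$'s to deduce that $\Hom_R(C,B)$ is cyclic, invoke Facts~\ref{f0218'} and~\ref{f0201'} to get $\Hom_R(C,B)\cong R$, and then use biduality to conclude $C\cong B$. The only cosmetic difference is that you spell out why $\beta^R_0$ is multiplicative on tensor products (via $(M\otimes_RN)\otimes_Rk\cong(M\otimes_Rk)\otimes_k(N\otimes_Rk)$), whereas the paper derives this implicitly from the observation in Fact~\ref{f0220'} that the tensor product of free resolutions is a free resolution of $C^0$.
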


The question of 
transitivity for this relation
is another open question in this area.
It is open, even for artinian rings containing  a field.
Compare this to Question~\ref{q0202}.

\begin{question} \label{q0202'}
Let $A$, $B$ and $C$ be semidualizing $R$-modules.
If $B$ is totally $A$-reflexive and $C$ is totally $B$-reflexive, must $C$ be 
totally $A$-reflexive?
\end{question}

\section{Semidualizing Complexes} \label{sec02}

This section contains definitions and background material on semidualizing
complexes. In a sense, these are derived-category versions of the semidualizing
modules from the previous section.
(For notation and background information on the derived category $\catd(R)$,
consult Appendix~\ref{sec01}.)
Motivation also comes from Grothendieck's notion of a dualizing complex~\cite{hartshorne:rad} and
Avramov and Foxby's notion of a relative dualizing complex~\cite{avramov:rhafgd}.
The general definition  is due to Christensen~\cite{christensen:scatac}.

\begin{defn} \label{d0201}
Let $C$ be an $R$-complex. 
The \emph{homothety morphism} associated to $C$
in the category of $R$-complexes $\catc(R)$ 
is the morphism
$\smash{\ol\chi}^R_C\colon R\to\hom_R(C,C)$ given by
$\smash{\ol\chi}^R_C(r)(c)=rc$. 
This induces a well-defined
\emph{homothety morphism} associated to $C$
in $\catd(R)$ which is denoted $\chi^R_C\colon R\to\rhom_R(C,C)$.

The $R$-complex $C$ is \emph{semidualizing} if
it is homologically finite,
and the homothety morphism $\chi^R_C\colon R\to\rhom_R(C,C)$ is an isomorphism
in $\catd(R)$. 
An $R$-complex $D$ is \emph{dualizing} if it is semidualizing and
has finite injective dimension.
\end{defn}

The first fact of this section describes this
definition in terms of resolutions.

\begin{fact} \label{f0202}
Let $C$ be an $R$-complex. 
The morphism
$\chi^R_C\colon R\to\rhom_R(C,C)$ in $\catd(R)$
can be described  using a free resolution $F$ of $C$,
in which case it is represented by the morphism
$\smash{\ol\chi}^R_F\colon R\to\hom_R(F,F)$ in $\catc(R)$.
It can also be described  using an injective resolution $I$ of $C$,
in which case it is represented by 
$\smash{\ol\chi}^R_I\colon R\to\hom_R(I,I)$.
Compare this with~\cite[(2.1.2)]{christensen:gd}.
As this suggests, the semidualizing
property can be detected by any free (or injective) resolution
of $C$; and, when $C$ is semidualizing, the semidualizing property
is embodied by every free resolution and every injective resolution.
Here is the essence of the argument of one aspect of this statement;
the others are similar. The resolutions $F$ and $I$
are connected by a quasiisomorphism $\alpha\colon F\res I$ which yields 
the next commutative
diagram in $\catc(R)$
$$\xymatrix{
R\ar[rr]^-{\smash{\ol\chi}^R_F}\ar[d]_{\smash{\ol\chi}^R_I} 
&&\hom_R(F,F)\ar[d]^{\hom_R(F,\alpha)}_{\simeq} \\
\hom_R(I,I)\ar[rr]^-{\hom_R(\alpha,I)}_-{\simeq}
&&\hom_R(F,I).}$$
Hence, $\smash{\ol\chi}^R_F$ is a quasiisomorphism if and only if $\smash{\ol\chi}^R_I$ is
a quasiisomorphism. 
\end{fact}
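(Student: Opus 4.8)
The claim has two parts: first, that the derived-category morphism $\chi^R_C$ can be computed from either a free resolution or an injective resolution (via the obvious chain-level homothety maps), and second, that the semidualizing property is detected by any such resolution and, when $C$ is semidualizing, is witnessed by every free and every injective resolution. The plan is to reduce everything to the behavior of the homothety map under quasiisomorphism, since all free (resp.\ injective) resolutions of $C$ are linked by quasiisomorphisms, and a free resolution and an injective resolution are linked by a quasiisomorphism $\alpha\colon F\res I$ (which exists because both compute $C$ in $\catd(R)$; the composite $F\res C\res I$ can be realized by an honest chain map up to homotopy, and in fact one has a genuine quasiisomorphism since $F$ is a complex of projectives mapping to the injective resolution $I$ of the same complex).

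The key computation is the commutative square displayed in the statement. First I would verify its commutativity in $\catc(R)$: both composites send $r\in R$ to the chain map $F\to I$ given by ``multiplication by $r$ followed by $\alpha$,'' equivalently ``$\alpha$ followed by multiplication by $r$,'' which agree because $\alpha$ is $R$-linear. Next, the two labeled maps $\hom_R(F,\alpha)$ and $\hom_R(\alpha,I)$ are quasiisomorphisms: $\hom_R(F,-)$ preserves quasiisomorphisms because $F$ is a bounded-below complex of projectives (so $\hom_R(F,-)$ computes $\rhom_R(F,-)$ on the nose), and $\hom_R(-,I)$ preserves quasiisomorphisms because $I$ is a bounded-below complex of injectives — here one uses the standard fact that $\rhom_R(C,C)$ may be computed either by replacing the first argument by $F$ or the second by $I$. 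Chasing the square then gives that $\smash{\ol\chi}^R_F$ is a quasiisomorphism if and only if $\smash{\ol\chi}^R_I$ is, and moreover that both represent the same morphism $\chi^R_C$ in $\catd(R)$ (namely the one obtained by transporting along these identifications), matching the description via $\rhom_R(C,C)$ in Definition~\ref{d0201}.

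Finally, for the ``any/every'' statement: given two free resolutions $F$ and $F'$, pick a quasiisomorphism $\beta\colon F\res F'$ (lifting the identity on $C$) and run the identical square with $\hom_R(F,\beta)$ and $\hom_R(\beta, F')$ in place of the two maps above — both are quasiisomorphisms since $F, F'$ are complexes of projectives — to conclude $\smash{\ol\chi}^R_F$ is a quasiisomorphism iff $\smash{\ol\chi}^R_{F'}$ is; the same argument with injective resolutions and a quasiisomorphism $\gamma\colon I\res I'$ handles the injective side, using that $\hom_R(-,I')$ and $\hom_R(I,-)$ preserve quasiisomorphisms of complexes of injectives. Combining with the mixed $F$-to-$I$ comparison, all four statements collapse to the single assertion that $\chi^R_C$ is a well-defined morphism in $\catd(R)$ whose being an isomorphism is independent of the chosen representative.

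The main obstacle is not conceptual but bookkeeping: one must be careful that the chain-level maps $\hom_R(F,\alpha)$ and $\hom_R(\alpha,I)$ really are quasiisomorphisms under only the standing hypothesis that $F$ is a bounded-below complex of projectives and $I$ a bounded-below complex of injectives (no finiteness needed for this step), and that the diagram literally commutes in $\catc(R)$ rather than merely up to homotopy — which it does, by the explicit formulas for the homothety maps. Once that is nailed down, the rest is a diagram chase.
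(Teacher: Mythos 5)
Your argument is the same as the paper's: the commutative square linking $\smash{\ol\chi}^R_F$ and $\smash{\ol\chi}^R_I$ through a quasiisomorphism $\alpha\colon F\res I$, with the two side maps quasiisomorphisms because $F$ is a bounded-below complex of projectives and $I$ is a complex of injectives bounded in the appropriate direction. One small slip: in this paper's homological indexing an injective resolution satisfies $I_i=0$ for $i\gg 0$, so $I$ is bounded \emph{above}, not below; with that corrected, your filled-in details (commutativity via $R$-linearity of $\alpha$, and the $F$-to-$F'$, $I$-to-$I'$ comparisons, where the relevant quasiisomorphisms between resolutions are in fact homotopy equivalences) are sound.
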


The next fact compares Definitions~\ref{d0201'} and~\ref{d0201}.

\begin{fact} \label{f0222}
An $R$-module $C$ is semidualizing 
as an $R$-module
if and only if it is
semidualizing as an $R$-complex.
To see this, let $F$ be a free resolution of $M$.
The condition $\ext^i_R(C,C)=0$ is equivalent to 
the condition $\HH_{-i}(\hom_R(F,F))=0$ 
because of the following isomorphisms:
$$\ext^i_R(C,C)\cong\HH_{-i}(\rhom_R(C,C))\cong\HH_{-i}(\hom_R(F,F)).$$
(See, e.g., Fact~\ref{f0106}.)
Thus, we assume that
$\ext^i_R(C,C)=0$ for all $i\geq 1$.
In particular, 
since $\HH_i(R)=0$ for all $i\neq 0$,
 the map
$\HH_i(\chi^R_C)\colon \HH_i(R)\to\HH_i(\rhom_R(C,C))$ is an isomorphism
for all $i\neq 0$.
Next, there is a commutative diagram of $R$-module homomorphisms
where the unspecified isomorphisms are from Facts~\ref{f0103}
and~\ref{f0111}
$$\xymatrix{
R\ar[rr]^-{\smash{\ol\chi}^R_C}\ar[d]_{\cong}
&&\hom_R(C,C) \ar[d]^{\cong} \\
\HH_0(R) \ar[rr]^-{\HH_0(\chi^R_C)}
&&\HH_0(\rhom_R(C,C)).
}$$
It follows that $\smash{\ol\chi}^R_C$ is an isomorphism if and only if
$\HH_0(\chi^R_C)$ is an isomorphism, that is, if and only if
$\chi^R_C$ is an isomorphism in $\catd(R)$.
\end{fact}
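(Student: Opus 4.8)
The plan is to unwind both definitions and match them term by term, doing all computations on the complex side with a single free resolution. Regard the module $C$ as a complex concentrated in degree $0$; its total homology is $C$ itself, so $C$ is homologically finite if and only if it is finitely generated, which reconciles condition~\eqref{d0201a'} of Definition~\ref{d0201'} with the finiteness hypothesis of Definition~\ref{d0201}. It then remains, for a fixed finitely generated $C$, to show that conditions~\eqref{d0201b'} and~\eqref{d0201c'} together are equivalent to the single requirement that $\chi^R_C\colon R\to\rhom_R(C,C)$ be an isomorphism in $\catd(R)$. Fix a free resolution $F\res C$, so that $\rhom_R(C,C)$ is represented by $\hom_R(F,F)$ and, by Fact~\ref{f0202}, the morphism $\chi^R_C$ is represented by $\smash{\ol\chi}^R_F$. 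A morphism in $\catd(R)$ is an isomorphism exactly when it induces an isomorphism on every homology module, so I would examine $\HH_i(\chi^R_C)$ for each $i$. Using the identification $\HH_{-i}(\rhom_R(C,C))\cong\ext^i_R(C,C)$, the homology of the target vanishes automatically in positive homological degree, while in homological degree $-i$ with $i\geq 1$ it equals $\ext^i_R(C,C)$. Since $\HH_i(R)=0$ for $i\neq 0$, it follows that $\HH_i(\chi^R_C)$ is an isomorphism for all $i\neq 0$ precisely when $\ext^i_R(C,C)=0$ for every $i\geq 1$, i.e.\ precisely when condition~\eqref{d0201c'} holds.

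It remains to analyze the degree-zero component. Under the natural identifications $\HH_0(R)\cong R$ and $\HH_0(\rhom_R(C,C))\cong\hom_R(C,C)$, the map $\HH_0(\chi^R_C)$ becomes the module homothety homomorphism $\smash{\ol\chi}^R_C$. Hence, granting condition~\eqref{d0201c'}, the requirement that $\chi^R_C$ be an isomorphism in $\catd(R)$ reduces to the requirement that $\HH_0(\chi^R_C)$ be an isomorphism, which is exactly the requirement that $\smash{\ol\chi}^R_C$ be an isomorphism, i.e.\ condition~\eqref{d0201b'}. Combining the two observations gives both implications: if $C$ is a semidualizing module, then conditions~\eqref{d0201b'}--\eqref{d0201c'} make $\HH_i(\chi^R_C)$ an isomorphism in every degree; conversely, if $\chi^R_C$ is an isomorphism in $\catd(R)$, then its homology in the nonzero degrees gives the Ext-vanishing and its homology in degree $0$ gives the bijectivity of $\smash{\ol\chi}^R_C$.

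The one step requiring genuine care — the main obstacle, such as it is — is checking that $\HH_0(\chi^R_C)$ is literally the module homothety map $\smash{\ol\chi}^R_C$ and not merely some map between the same two modules; equivalently, that the square with vertical arrows the canonical identifications $R\xra{\cong}\HH_0(R)$ and $\hom_R(C,C)\xra{\cong}\HH_0(\rhom_R(C,C))$ and horizontal arrows $\smash{\ol\chi}^R_C$ and $\HH_0(\chi^R_C)$ commutes. This follows from the description of $\chi^R_C$ via a free resolution recorded in Fact~\ref{f0202} together with the naturality of the identification of $\HH_0$ of a derived Hom-complex with the corresponding module of homomorphisms. Once that diagram is in hand, the rest is routine homological bookkeeping.
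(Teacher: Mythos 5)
Your proof is correct and follows essentially the same approach as the paper: compute $\HH_i(\rhom_R(C,C))\cong\ext^{-i}_R(C,C)$ via a free resolution, observe that vanishing in degrees $i\neq 0$ is exactly condition~\eqref{d0201c'}, and identify $\HH_0(\chi^R_C)$ with the module-level homothety $\smash{\ol\chi}^R_C$ through the same commutative square. If anything, you are a bit more explicit than the paper about how the finiteness conditions match up and about the logical structure of the biconditional, but the mathematical content is the same.
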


The next fact documents the interplay between the semidualizing
property and the suspension operator.

\begin{fact} \label{f0203}
It is straightforward to show that an $R$-complex $C$ is
semidualizing if and only if some (equivalently, every) shift $\shift^iC$
is semidualizing; see~\cite[(2.4)]{christensen:scatac}. 
The essential point of the proof is that Fact~\ref{f4112} yields natural isomorphisms
$$\rhom_R(\shift^iC,\shift^iC)\simeq\shift^{i-i}\rhom_R(C,C)\simeq\rhom_R(C,C)$$
that are compatible with the homothety morphisms
$\chi^R_C$ and $\chi^R_{\shift^iC}$.
The analogous statement for dualizing complexes
follows from this because of the equality
$\id_R(\shift^iC)=\id_R(C)-i$ from Fact~\ref{f0101'}.
\end{fact}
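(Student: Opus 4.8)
The plan is to reduce the statement to a single natural isomorphism at the level of $\rhom$ in $\catd(R)$ that intertwines the two homothety morphisms, and then read off the dualizing case from the behavior of injective dimension under shifts. First I would recall (or cite Fact~\ref{f4112}) that for any $R$-complex $C$ and any integer $i$ there is a natural isomorphism in $\catd(R)$
$$\rhom_R(\shift^iC,\shift^iC)\simeq\shift^{-i}\shift^i\rhom_R(C,C)\simeq\rhom_R(C,C),$$
where the shifts on the two arguments contribute $\shift^{-i}$ and $\shift^{i}$ respectively, which cancel. The key point is not merely the existence of this isomorphism but its compatibility with the homothety morphisms: the square with top edge $\chi^R_{\shift^iC}\colon R\to\rhom_R(\shift^iC,\shift^iC)$, bottom edge $\chi^R_C\colon R\to\rhom_R(C,C)$, and right edge the displayed isomorphism, commutes in $\catd(R)$. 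This can be checked at the level of $\catc(R)$ using a free resolution $F$ of $C$, since $\shift^iF$ is then a free resolution of $\shift^iC$ and the homothety morphism of a complex equals that of any free resolution by Fact~\ref{f0202}; the map $r\mapsto(\text{multiplication by }r)$ visibly commutes with the sign conventions defining $\shift^i$ on $\hom_R(F,F)$.

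Granting the commutative square, $\chi^R_{\shift^iC}$ is an isomorphism in $\catd(R)$ if and only if $\chi^R_C$ is, since the other edges of the square are isomorphisms; and $\shift^iC$ is homologically finite precisely when $C$ is, because $\HH_n(\shift^iC)\cong\HH_{n-i}(C)$. Together these say exactly that $\shift^iC$ is semidualizing if and only if $C$ is, giving the ``some (equivalently, every)'' form of the statement by letting $i$ range over $\zz$.

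For the dualizing case, I would invoke the formula $\id_R(\shift^iC)=\id_R(C)-i$ from Fact~\ref{f0101'}: this is finite if and only if $\id_R(C)$ is finite, so $\shift^iC$ has finite injective dimension exactly when $C$ does. Combining this with the semidualizing equivalence just established shows $\shift^iC$ is dualizing if and only if $C$ is. I expect the only real subtlety to be bookkeeping the sign and shift conventions in the natural isomorphism $\rhom_R(\shift^iC,\shift^iC)\simeq\rhom_R(C,C)$ and verifying that it is genuinely compatible with the homothety morphisms rather than off by a sign; everything else is formal, and one may simply cite \cite[(2.4)]{christensen:scatac} for the careful verification.
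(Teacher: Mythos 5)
Your proposal matches the paper's own argument: both hinge on the shift isomorphism $\rhom_R(\shift^iC,\shift^iC)\simeq\rhom_R(C,C)$ from Fact~\ref{f4112}, its compatibility with the homothety morphisms, and the formula $\id_R(\shift^iC)=\id_R(C)-i$ for the dualizing case. Your extra remarks on verifying the commuting square via a free resolution and on the shift-invariance of homological finiteness (the latter recorded in Fact~\ref{f0001}) are consistent with what the paper leaves implicit.
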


\begin{disc} \label{d9998}
As in Remark~\ref{d9999}, we pause to explain some of the 
issues that arise when investigating semidualizing complexes in the non-local setting.
Let $S$ be a commutative noetherian ring, 
not necessarily local, and let $C$ be an $S$-complex.
Define the homothety homomorphism $\smash{\ol\chi}^S_C\colon S\to\hom_S(C,C)$,
the semidualizing property, and the set $\s(S)$ 
as in~\ref{d0201}. 

When $\spec(S)$ is connected, the set $\s(S)$ behaves similarly to $\s_0(S)$:
a nontrivial Picard group makes the ordering on $\s(S)$ non-antisymmetric,
and one can  overcome this by looking at an appropriate set of orbits.

However, when $\spec(S)$ is disconnected (that is, when  $S\cong S_1\times S_2$
for (nonzero) commutative noetherian rings $S_1$ and $S_2$) things are 
even more complicated than in the module-setting.
Indeed, the semidualizing $S$-complexes are all of the form
$\shift^iC_1\oplus \shift^jC_2$ where each $C_i$ is a semidualizing $S_i$-complex.
In other words, each connected component of $\spec(S)$ contributes essentially
two degrees of freedom to the elements of $\s(S)$.
For further discussion, see~\cite{avramov:rrc1,frankild:rrhffd,frankild:rbsc}.
\end{disc}

The next two facts are versions of~\ref{f0231'} and~\ref{f0215'} for semidualizing complexes.

\begin{fact} \label{f0201}
If $R$ is Gorenstein, then every semidualizing $R$-complex $C$ is isomorphic in $\catd(R)$
to $\shift^iR$ for some integer $i$ by~\cite[(8.6)]{christensen:scatac};
see also Theorem~\ref{prop0101}. 
(Note that the assumption that $R$ is local is crucial here
because of Remark~\ref{d9998}.) 
If $R$ is Cohen-Macaulay, then every semidualizing $R$-complex $C$ is isomorphic in $\catd(R)$
to $\shift^iB$ for some integer $i$ and some semidualizing $R$-module $B$
by~\cite[(3.4)]{christensen:scatac}. 
(In each case, we have $i=\inf(C)$. In the second case, we have
$B\cong\HH_i(C)$; see Facts~\ref{f0110} and~\ref{f0107}.
Again, this hinges on the assumption that $R$ is local.)
The converses of these statements hold when
$R$ has a dualizing complex
by~\cite[(8.6)]{christensen:scatac} and Fact~\ref{f0215'};  
the converses can fail when $R$ does not have a dualizing
complex; see~\cite[(5.5)]{christensen:dvke}. 
\end{fact}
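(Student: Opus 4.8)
The plan is to prove the Gorenstein statement directly, to deduce the Cohen-Macaulay statement after passing to the completion (so as to have a dualizing complex available), and to obtain the converses by specializing to the dualizing complex. Throughout, $C$ is a semidualizing $R$-complex; it is nonzero and homologically finite.

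\smallskip
\noindent\emph{The Gorenstein case.} The point is that $R$ being Gorenstein means exactly that $R$ is itself a dualizing $R$-complex, so one may take $D=R$. Put $\da C:=\rhom_R(C,R)$. Because $C$ is homologically finite and $\id_R R<\infty$, the Hom-evaluation morphism $C\lotimes_R\rhom_R(C,R)\to\rhom_R(\rhom_R(C,C),R)$ is an isomorphism; composing with $\chi^R_C$ gives
\[
C\lotimes_R\da C\;\simeq\;\rhom_R(\rhom_R(C,C),R)\;\simeq\;\rhom_R(R,R)\;\simeq\;R
\]
in $\catd(R)$. Now apply $k\lotimes_R-$. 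Since $\da C\lotimes_R k$ is a complex of $k$-vector spaces, restriction of scalars along $R\to k$ rewrites the left-hand side and one obtains
\[
(C\lotimes_R k)\lotimes_k(\da C\lotimes_R k)\;\simeq\;k .
\]
Both factors are nonzero by Nakayama's lemma, hence each is a finite-dimensional graded $k$-vector space of total dimension at least $1$; the total dimension of their tensor product over $k$ is the product of the two, and it equals $1$. Therefore each factor is one-dimensional and concentrated in a single degree, so $C\lotimes_R k\simeq\shift^i k$ with $i=\inf C$. This forces the minimal semifree resolution of $C$ to be $\shift^i R$, i.e.\ $C\simeq\shift^i R$ in $\catd(R)$.

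\smallskip
\noindent\emph{The Cohen-Macaulay case.} First reduce to the complete case: the semidualizing property ascends and descends along $R\to\rhat$ (argue as in Fact~\ref{f0204'}, via Fact~\ref{f0202} and faithful flatness), $R$ is Cohen-Macaulay iff $\rhat$ is, and $C\simeq\shift^iB$ over $R$ iff $\chat\simeq\shift^iB$ over $\rhat$. So assume $R$ is complete and Cohen-Macaulay. Then $R$ has a dualizing complex $D$, and since $R$ is Cohen-Macaulay $D$ has zero amplitude, so $D\simeq\shift^nE$ for a dualizing module $E$. The crux is to prove $\amp C=0$. A clean tool is the identity, valid for any homologically finite $X$ over a local ring,
\[
\HH_{\amp X}(\rhom_R(X,X))\;\cong\;\Hom_R(\HH_{\inf X}(X),\HH_{\sup X}(X)),
\]
obtained by computing $\Hom$ in the top degree from a free resolution of $X$ supported in degrees $\geq\inf X$ and an injective resolution supported in degrees $\leq\sup X$. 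For $X=C$ this homology vanishes because $\rhom_R(C,C)\simeq R$; combining this (and its counterpart for the semidualizing complex $\da C:=\rhom_R(C,D)$, together with $C\simeq\rhom_R(\da C,D)$, $\amp\da C=\amp C$, and the faithfulness of the duality $\rhom_R(-,E)$) pins down $\amp C=0$. This last deduction is where I expect the real work to lie; it is Christensen's \cite[(3.4)]{christensen:scatac}. Once $\amp C=0$, set $i=\inf C$ and $B=\HH_i(C)$, so that $C\simeq\shift^iB$ in $\catd(R)$; then $\shift^iB$ being semidualizing forces $B$ to be a semidualizing $R$-module by Facts~\ref{f0203} and~\ref{f0222}.

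\smallskip
\noindent\emph{The converses.} Assume $R$ has a dualizing complex $D$. If every semidualizing $R$-complex is isomorphic to a shift of $R$, then $D\simeq\shift^iR$, whence $R\simeq\shift^{-i}D$ has finite injective dimension and $R$ is Gorenstein. If instead every semidualizing $R$-complex is isomorphic to a shift of a module, then $D\simeq\shift^iB$ for an $R$-module $B$; by Facts~\ref{f0203} and~\ref{f0222} this $B$ is a semidualizing module, and its injective dimension is finite (it differs from $\id_R D$ by $i$), so $B$ is a dualizing module and $R$ is Cohen-Macaulay by Fact~\ref{f0215'}. That the assumption of a dualizing complex cannot be omitted is shown by the explicit non-Gorenstein (and non-Cohen-Macaulay) local ring of \cite[(5.5)]{christensen:dvke}, which has no dualizing complex and yet has no semidualizing complexes besides the shifts of $R$.
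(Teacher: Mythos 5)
The paper itself gives no argument for this statement --- Fact~\ref{f0201} simply cites~\cite[(8.6), (3.4)]{christensen:scatac} --- so there is no internal proof to match against, and your write-up is a reconstruction. Your Gorenstein argument is correct and self-contained. One can also phrase it without Hom-evaluation: since $R$ is itself a dualizing complex, $C$ is $R$-reflexive (Fact~\ref{f0299}), so Gerko's decomposition (Fact~\ref{f0220}) gives $\rhom_R(C,R)\lotimes_RC\simeq R$ directly; then Fact~\ref{f0102} yields the Poincar\'e-series factorization $1=P^R_{\rhom_R(C,R)}(t)\,P^R_C(t)$, which forces $P^R_C(t)=t^{\inf C}$, and $C\simeq\shift^{\inf C}R$ follows from Fact~\ref{f0205} or Lemma~\ref{l0201}. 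This is exactly the Poincar\'e-series trick the paper uses in Fact~\ref{f1001} and Lemma~\ref{l0501}, so your route sits squarely in the spirit of the surrounding material; your dimension count of $(C\lotimes_Rk)\lotimes_k(\rhom_R(C,R)\lotimes_Rk)$ is the same computation in vector-space language.

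For the Cohen--Macaulay case you reduce correctly to $\rhat$ and identify the right target, $\amp C=0$, but as you concede, the single vanishing $\Hom_R(\HH_{\inf C}(C),\HH_{\sup C}(C))=0$ extracted from Fact~\ref{f0111} does not by itself force this; your citation of~\cite[(3.4)]{christensen:scatac} is carrying the real weight, which is also all the paper does. Be aware that the Gorenstein trick (a rank-one Poincar\'e series) has no amplitude analogue, so a genuinely different mechanism is required there. The converses are handled correctly. One small caution: you describe the ring of~\cite[(5.5)]{christensen:dvke} as ``non-Cohen-Macaulay,'' but the paper does not assert this, and the same citation in Fact~\ref{f0231'} concerns failure of the \emph{Gorenstein} converse for a ring lacking a dualizing \emph{module}; verify that adjective before leaning on it for the Cohen--Macaulay converse.
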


\begin{fact} \label{f0215}
Grothendieck and Hartshorne~\cite[(V.10)]{hartshorne:rad}
and Kawasaki~\cite[(1.4)]{kawasaki}
show that $R$ has a dualizing complex if and only if $R$
is a homomorphic image of a Gorenstein ring.
In particular, if $R$ is complete, then Cohen's structure theorem
implies that $R$ has a dualizing complex.
\end{fact}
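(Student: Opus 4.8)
The plan is to establish the two implications separately; the forward implication is elementary given the machinery already recalled, while the converse is the deep input and the main obstacle.

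\textbf{Forward direction (``$R$ a homomorphic image of a Gorenstein ring'' $\Rightarrow$ ``$R$ has a dualizing complex'').} Suppose $R\cong Q/I$ with $Q$ a Gorenstein ring of finite Krull dimension; since $R$ is local I may take $Q$ local and the map $Q\onto R$ surjective, hence module-finite. As $Q$ is Gorenstein of finite dimension, $\id_Q(Q)<\infty$ and the homothety morphism $Q\to\rhom_Q(Q,Q)$ is an isomorphism in $\catd(Q)$, so $Q$ (in degree $0$) is a dualizing complex for $Q$. I would then verify that $D:=\rhom_Q(R,Q)$ is a dualizing complex for $R$ in three steps. (i) $D$ is homologically finite over $R$: its homology is finitely generated over $Q$ (because $R$ is and $Q$ is noetherian) and killed by $I$, hence finitely generated over $R$, and it is bounded since $\id_Q(Q)<\infty$. (ii) $\id_R(D)<\infty$: the standard adjunction gives a natural isomorphism $\rhom_R(-,D)\simeq\rhom_Q(-,Q)$ of functors on $\catd(R)$, and the right-hand side preserves homological boundedness because $\id_Q(Q)<\infty$; this forces $\id_R(D)<\infty$. (iii) the homothety $\chi^R_D$ is an isomorphism: via the same adjunction $\rhom_R(D,D)\simeq\rhom_Q(\rhom_Q(R,Q),Q)$, and since $Q$ is a dualizing complex for $Q$ the biduality morphism $R\to\rhom_Q(\rhom_Q(R,Q),Q)$ is an isomorphism in $\catd(Q)$ for the homologically finite complex $R$; a diagram chase through the adjunction isomorphisms, entirely parallel to the square displayed in Fact~\ref{f0220'}, identifies this composite with $\chi^R_D$. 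Hence $D$ is semidualizing with finite injective dimension, i.e.\ dualizing.

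\textbf{Converse (``$R$ has a dualizing complex'' $\Rightarrow$ ``$R$ is a homomorphic image of a Gorenstein ring'').} This is the deep part, and I would not reprove it from scratch. When $R$ is Cohen-Macaulay it already follows from facts recorded above: by Fact~\ref{f0201} a Cohen-Macaulay ring with a dualizing complex has a dualizing module (a shift of the dualizing complex, which retains finite injective dimension), and Fact~\ref{f0215'} then exhibits such a ring as a homomorphic image of a Gorenstein ring---the underlying mechanism being Reiten's idealization $R\ltimes\omega$, which is Gorenstein and has $R$ as a residue ring. The genuinely hard, non-Cohen-Macaulay case is Kawasaki's arithmetic Macaulayfication theorem, which manufactures from the dualizing complex a Cohen-Macaulay ring birationally related to $R$ and, after an idealization step as above, realizes $R$ as a homomorphic image of a finite-dimensional Gorenstein ring. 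I expect this converse to be the main obstacle by a wide margin; in the write-up I would simply cite \cite{hartshorne:rad} and \cite{kawasaki} for it.

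\textbf{The ``in particular'' statement.} For this only the forward direction is needed. If $R$ is complete, Cohen's structure theorem provides a surjection $Q\onto R$ with $Q$ a complete regular local ring---a formal power series ring over a field or over a complete discrete valuation ring---and $Q$ is Gorenstein of finite Krull dimension. The forward direction then furnishes a dualizing complex for $R$, concretely $\rhom_Q(R,Q)$.
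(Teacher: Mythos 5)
Your proposal is correct. The paper offers no proof of this Fact at all; it simply states the equivalence and cites Hartshorne~\cite[(V.10)]{hartshorne:rad} for the implication ``homomorphic image of a Gorenstein ring $\Rightarrow$ dualizing complex'' and Kawasaki~\cite[(1.4)]{kawasaki} for the converse, and then invokes Cohen's structure theorem for the complete case. Where you supply a proof you supply the right one: $D=\rhom_Q(R,Q)$ with $Q$ Gorenstein local, the adjunction $\rhom_R(-,D)\simeq\rhom_Q(-,Q)$ to bound the injective dimension, and biduality over $Q$ to identify $\rhom_R(D,D)\simeq R$ with the homothety --- this is precisely the construction underlying \cite[(V.10)]{hartshorne:rad}, so it aligns with what the paper is citing rather than being an alternative route. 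Your decision to cite Kawasaki for the converse matches the paper, and your reduction of the Cohen-Macaulay case to Facts~\ref{f0201} and~\ref{f0215'} is a correct observation, though strictly more than the Fact asks for. The only thing worth flagging is a hygiene point you already handle implicitly: a Gorenstein ring need not have finite Krull dimension globally, so one should (as you do) localize $Q$ at the contraction of $\m$ before invoking $\id_Q(Q)<\infty$.
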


The next fact generalizes~\ref{f0206'}.

\begin{fact} \label{f0206}
Assume for this paragraph that $R$ has a dualizing complex $D$. Then 
there is a coefficientwise equality
$I^R_R(t)=t^sP^R_D(t)$ where $s=\dim(R)-\sup(D)$;
that is,
for all $i\in\bbz$ we have
$\mu^i_R(R)=\beta_{i-s}^R(D)$;
see, for instance,  \cite[(1.5.3),(2.6)]{avramov:rhafgd} and~\cite[(V.3.4)]{hartshorne:rad}.
Also, we have
$\sup(D)-\inf(D)=\dim(R)-\depth(R)$, that is, the range of nonvanishing homology of 
$D$ is the same as the Cohen-Macaulay defect of $R$; see~\cite[(3.5)]{christensen:scatac}.

More generally, let $D'$ be a dualizing complex for $\rhat$.
Then we have
$$I^R_R(t)=I^{\rhat}_{\rhat}(t)=t^sP^{\rhat}_{D'}(t)$$ 
where $s=\dim(\widehat R)-\sup(D')$;
in other words,
for all $i\in\bbz$ we have
$\mu^i_R(R)=\beta_{i-s}^{\rhat}(D')$.
Furthermore, we have
$$\sup(D')-\inf(D')=\dim(\rhat)-\depth(\rhat)=\dim(R)-\depth(R).$$
Compare this with Fact~\ref{f0206'}.
\end{fact}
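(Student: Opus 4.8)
The plan is to reduce everything to two computations in $\catd(R)$, the homology of $\rhom_R(k,D)$ and of $\rhom_R(k,R)$, and then to pass from a general local ring to the complete case by flat base change. Assume first that $R$ has a dualizing complex $D$. Applying $\rhom_R(k,-)$ to the semidualizing isomorphism $R\simeq\rhom_R(D,D)$ and using derived Hom--tensor adjointness gives $\rhom_R(k,R)\simeq\rhom_R(k\lotimes_RD,D)$. If $F\res D$ is a \emph{minimal} free resolution, minimality makes the differential of $k\otimes_RF$ zero, so $k\lotimes_RD\simeq k\otimes_RF\simeq\bigoplus_i\shift^ik^{\beta^R_i(D)}$ in $\catd(R)$, a finite sum since $D$ is homologically finite; combining this with the suspension rules (Fact~\ref{f4112}) yields $\rhom_R(k,R)\simeq\bigoplus_i\shift^{-i}\rhom_R(k,D)^{\beta^R_i(D)}$. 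Everything then hinges on the identification $\rhom_R(k,D)\simeq\shift^{-s}k$ with $s=\dim R-\sup D$, equivalently $\mu^j_R(k,D)=\delta_{j,s}$: granting it, $\rhom_R(k,R)\simeq\bigoplus_i\shift^{-i-s}k^{\beta^R_i(D)}$, so $\mu^j_R(R)=\rank_k\HH_{-j}(\rhom_R(k,R))=\beta^R_{j-s}(D)$, which is exactly the coefficientwise equality $I^R_R(t)=t^sP^R_D(t)$.

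The identification $\rhom_R(k,D)\simeq\shift^{-s}k$ is, I expect, the only genuine obstacle; the rest is formal. To keep the argument self-contained I would invoke local duality. Let $D^{\natural}$ be the normalized dualizing complex, i.e.\ the unique shift of $D$ with $\rhom_R(k,D^{\natural})\simeq k$; local duality gives $\rhom_R(X,D^{\natural})\simeq\hom_R(\mathbf{R}\Gamma_{\m}(X),E_R(k))$ for all homologically finite $X$, so $X=R$ yields $D^{\natural}\simeq\hom_R(\mathbf{R}\Gamma_{\m}(R),E_R(k))$. A degreewise comparison through this Matlis dual shows $\HH_n(D^{\natural})\neq 0$ precisely when $\HH^n_{\m}(R)\neq 0$, whose extreme nonvanishing degrees are $\depth R$ (characterization of depth via local cohomology) and $\dim R$ (Grothendieck's nonvanishing theorem); hence $\sup D^{\natural}=\dim R$ and $\inf D^{\natural}=\depth R$. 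Matching suprema forces $D^{\natural}=\shift^sD$ with $s=\dim R-\sup D$, and then $\rhom_R(k,D)\simeq\shift^{-s}\rhom_R(k,D^{\natural})\simeq\shift^{-s}k$; moreover $\sup D-\inf D=\sup D^{\natural}-\inf D^{\natural}=\dim R-\depth R$, which is the amplitude statement. (Alternatively, the Bass--Poincar\'e identity is \cite[(1.5.3),(2.6)]{avramov:rhafgd} and \cite[(V.3.4)]{hartshorne:rad}, and the amplitude identity is \cite[(3.5)]{christensen:scatac}, and one may simply cite these.)

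Finally, for a general local ring $R$ without a dualizing complex: since $R\to\rhat$ is flat with $\rhat\otimes_Rk\cong k$ and each $\ext^i_R(k,R)$ has finite length (it is a finitely generated $k$-vector space), the flat-base-change isomorphism $\rhat\otimes_R\ext^i_R(k,R)\cong\ext^i_{\rhat}(k,\rhat)$ gives $\mu^i_R(R)=\mu^i_{\rhat}(\rhat)$, that is, $I^R_R(t)=I^{\rhat}_{\rhat}(t)$. The ring $\rhat$ is complete, hence has a dualizing complex $D'$ by Cohen's structure theorem and Fact~\ref{f0215}, so applying the two preceding parts to $\rhat$ --- together with $\dim\rhat=\dim R$ and $\depth\rhat=\depth R$ --- yields $I^{\rhat}_{\rhat}(t)=t^sP^{\rhat}_{D'}(t)$ with $s=\dim\rhat-\sup D'$ and $\sup D'-\inf D'=\dim R-\depth R$, as claimed.
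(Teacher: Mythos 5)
The paper does not prove this statement internally; it is recorded as a Fact and attributed to \cite[(1.5.3),(2.6)]{avramov:rhafgd}, \cite[(V.3.4)]{hartshorne:rad}, and \cite[(3.5)]{christensen:scatac}. Your argument is correct and in spirit reconstructs exactly what those citations supply: applying $\rhom_R(k,-)$ to $R\simeq\rhom_R(D,D)$ and computing with a minimal free resolution is precisely the proof of the Bass-series factorization $I^{\rhom_R(X,Z)}_R(t)=P^R_X(t)\,I^Z_R(t)$ (the paper's Fact~\ref{f0102}, i.e.\ \cite[(1.5.3)]{avramov:rhafgd}) specialized to $X=Z=D$; the local-duality identification $\rhom_R(k,D^{\natural})\simeq k$ and the nonvanishing range of $\mathbf{R}\Gamma_{\m}(R)$ give $I^D_R(t)=t^s$ and the amplitude statement, matching \cite[(V.3.4)]{hartshorne:rad} and \cite[(3.5)]{christensen:scatac}; and the flat base change to $\rhat$ is the standard reduction.

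One sentence is wrong, though it does not break the proof: you assert that $k\lotimes_RD\simeq\bigoplus_i\shift^ik^{\beta^R_i(D)}$ is ``a finite sum since $D$ is homologically finite.'' Homological finiteness of $D$ does not bound its projective dimension --- when $R$ is not Gorenstein, a dualizing complex has $\pd_R(D)=\infty$, so the minimal free resolution $F$ is unbounded and the direct sum is genuinely infinite. The conclusion $\rhom_R(k,R)\simeq\bigoplus_i\shift^{-i}\rhom_R(k,D)^{\beta^R_i(D)}$ nonetheless holds, for a different reason: $D$ has finite injective dimension, so one may compute $\rhom_R(k\lotimes_RD,D)$ as $\hom_R(k\otimes_RF,I)$ with $I$ a bounded injective resolution of $D$, and because $I$ is bounded the degreewise Hom is a finite product for each degree, hence the infinite product $\prod_i$ agrees with the infinite coproduct $\bigoplus_i$. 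Alternatively, even if one writes a product, in each fixed homological degree only the single factor $i=j-s$ contributes, so the Bass-number computation $\mu^j_R(R)=\beta^R_{j-s}(D)$ survives unchanged. You should correct the ``finite sum'' justification to one of these two observations.
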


Fact~\ref{f0201'} implies that a cyclic semidualizing $R$-module
must be isomorphic to the ring $R$.
Using the previous fact, we show next that a version of this statement 
for semidualizing complexes fails in general.
Specifically, there exists a ring $R$ that has a semidualizing $R$-complex $C$
that is not shift-isomorphic to $R$ even though its first nonzero Betti number is 1.
See Example~\ref{ex0601} for more on this ring.

\begin{ex} \label{ex0101}
Let $k$ be a field and set $R=k[\![X,Y]\!]/(X^2,XY)$. Then $R$ is a complete local ring
of dimension 1 and depth 0. 
Hence $R$ has a dualizing complex $D$. 
Apply a shift if necessary to assume that $\inf(D)=0$.
Then Fact~\ref{f0206} provides the first equality in the next sequence
$$P^R_D(t)=I^R_R(t)=1+2t+2t^2+\cdots$$
while the second equality is from, e.g., \cite[Ex.\ 1]{christensen:iabbn}.
In particular, we have $\beta^R_0(D)=1$
and $\beta^R_i(D)=0$ for all $i<0$ even though $D\not\simeq\shift^jR$
for all $j\in\bbz$.
\end{ex}

We shall use the next definition  to equate semidualizing complexes that are
essentially the same. This compares with the identification of isomorphic modules
in Definition~\ref{d0201}; see
Fact~\ref{f0214}.

\begin{defn} \label{d0203}
Given two $R$-complexes $B$ and $C$,  if there is an integer $i$ such that
$C\simeq\shift^iB$, then 
$B$ and $C$ are
\emph{shift-isomorphic}.\footnote{This yields an equivalence relation on the class of all semidualizing
$R$-complexes:
(1) One has $C\simeq \shift^0C$;
(2) If $C\simeq\shift^iB$, then $B\simeq\shift^{-i}C$;
(3) If $C\simeq\shift^iB$ and $B\simeq\shift^jA$, then $C\simeq\shift^{i+j}A$.}
The set of ``shift-isomorphism classes'' of semidualizing $R$-complexes is denoted
$\s(R)$, and the shift-isomorphism class of a semidualizing $R$-complex $C$ is
denoted $\cl C$. 
\end{defn}

The next fact compares Definitions~\ref{d0201'} and~\ref{d0203}.

\begin{fact} \label{f0214}
It is straightforward to show that
the natural embedding of $\catm(R)$ inside $\catd(R)$ induces a natural
injection $\s_0(R)\hookrightarrow\s(R)$; see Facts~\ref{f0222} and~\ref{f0103}. 
This injection is surjective when $R$ is Cohen-Macaulay by Fact~\ref{f0201}.
(Note that the assumption that $R$ is local is essential here
because of Remark~\ref{d9998}.)
\end{fact}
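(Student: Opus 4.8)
The plan is to verify three things: that $C\mapsto\cl C$ is a well-defined map $\s_0(R)\to\s(R)$, that this map is injective, and that it is surjective when $R$ is Cohen-Macaulay; locality enters only at the very end. For well-definedness I would take a semidualizing $R$-module $C$ and regard it as an $R$-complex concentrated in degree $0$. Fact~\ref{f0222} says exactly that $C$ is then a semidualizing $R$-complex, so it has a shift-isomorphism class $\cl C\in\s(R)$. If $B\cong C$ as $R$-modules, then $B$ and $C$ are isomorphic already in $\catc(R)$, hence in $\catd(R)$, so $\cl B=\cl C$ in $\s(R)$; thus the assignment factors through $\s_0(R)$, and its naturality in $R$ is immediate from the construction.

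For injectivity, suppose $B$ and $C$ are semidualizing $R$-modules with $\cl B=\cl C$ in $\s(R)$; by Definition~\ref{d0203} this means $C\simeq\shift^iB$ in $\catd(R)$ for some $i\in\bbz$. Taking homology gives $\HH_0(C)\cong\HH_{-i}(B)$, and since $\HH_0(C)=C\neq 0$ while $\HH_{-i}(B)=0$ whenever $i\neq 0$, we must have $i=0$, so $B\simeq C$ in $\catd(R)$. Applying $\HH_0$ to this isomorphism---equivalently, invoking the full faithfulness of the embedding $\catm(R)\hookrightarrow\catd(R)$ recorded in Fact~\ref{f0103}---produces an $R$-module isomorphism $B\cong C$, so $\cl B=\cl C$ in $\s_0(R)$.

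For surjectivity when $R$ is Cohen-Macaulay, I would take an arbitrary class in $\s(R)$, represented by a semidualizing $R$-complex $C$. By Fact~\ref{f0201} there are an integer $i$ and a semidualizing $R$-module $B$---indeed $i=\inf(C)$ and $B\cong\HH_i(C)$---with $C\simeq\shift^iB$ in $\catd(R)$. Then $\cl C=\cl{\shift^iB}=\cl B$ in $\s(R)$, and $\cl B$ visibly lies in the image of $\s_0(R)\to\s(R)$, so the map is onto.

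I do not expect a genuine obstacle---the author's ``straightforward'' is warranted---but the step requiring the most care is injectivity: one must be certain that an isomorphism in $\catd(R)$ between two modules concentrated in degree $0$ really does come from a module isomorphism, which is precisely the input borrowed from Fact~\ref{f0103}. It is also here that locality matters: as Remark~\ref{d9998} explains, over a disconnected spectrum a semidualizing complex splits as $\shift^iC_1\oplus\shift^jC_2$ with the two shifts independent, so a single global shift no longer normalizes a complex to a module, and both the well-definedness and the injectivity of $\s_0(R)\to\s(R)$ would break down.
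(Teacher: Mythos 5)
Your proof is correct and matches the paper's intended argument: the paper compresses the verification into citations of Facts~\ref{f0222}, \ref{f0103}, and~\ref{f0201}, and you have unpacked exactly those citations -- Fact~\ref{f0222} for well-definedness, full faithfulness of the embedding from Fact~\ref{f0103} (together with the shift-normalization via homology) for injectivity, and Fact~\ref{f0201} for surjectivity in the Cohen--Macaulay case. One small inaccuracy in your closing commentary: over a non-local ring $S$ (even with disconnected spectrum) the map $\s_0(S)\to\s(S)$ remains well-defined and injective by the same homology argument you gave; what fails is surjectivity, since a semidualizing $S$-complex over $S_1\times S_2$ has the form $\shift^{i}C_1\oplus\shift^{j}C_2$ and cannot in general be normalized by a single global shift to a module concentrated in degree $0$. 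The parenthetical in the fact, and the locality caveat in Fact~\ref{f0201}, are pointing at that surjectivity statement, not at well-definedness or injectivity.
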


Here is the version of Question~\ref{q0201'} for semidualizing complexes.
Again, Remark~\ref{d9998} shows that the assumption that $R$ is local is crucial.
Fact~\ref{f0214} shows that an affirmative answer to 
Question~\ref{q0201} would yield an affirmative answer to 
Question~\ref{q0201'}. Also note that
the methods of this paper
do not answer this question, even though Theorem~\ref{prop0103} shows that
$\s(R)$ cannot  have arbitrarily long chains.

\begin{question} \label{q0201}
Is the set  $\s(R)$ finite?
\end{question}

The next properties compare to those in Fact~\ref{f0204'}.

\begin{fact} \label{f0204}
When $C$ is a homologically finite $R$-complex, it is semidualizing for $R$
if and only if the base-changed complex $\rhat\lotimes_RC$ is semidualizing for $\rhat$.
The essential point of the proof is that Fact~\ref{f4112} provides
the following isomorphism in $\catd(\rhat)$
$$\rhom_{\rhat}(\rhat\lotimes_RC,\rhat\lotimes_RC)
\simeq\rhat\lotimes_R\rhom_R(C,C)$$
which is compatible with the corresponding homothety morphisms.
The parallel statement for dualizing objects
also holds;
see~\cite[(5.6)]{christensen:scatac}
and~\cite[(V.3.5)]{hartshorne:rad}.

Given two homologically finite $R$-complexes $B$ and $C$, we have
$C\simeq \shift^iB$ if and only if $\rhat\lotimes_RC\simeq\rhat\lotimes_R\shift^iB$
by~\cite[(1.11)]{frankild:rrhffd}. Combining this with the previous paragraph,
we see that the assignment $C\mapsto\rhat\lotimes_RC$ induces a well-defined
injective function $\s(R)\hookrightarrow\s(\rhat)$. The restriction to $\s_0(R)$
is precisely the induced map from Fact~\ref{f0204'},
and thus there is a commutative diagram
$$\xymatrix{
\s_0(R) \ar@{^(->}[r]\ar@{^(->}[d] &\s_0(\rhat) \ar@{^(->}[d] \\
\s(R) \ar@{^(->}[r] &\s(\rhat). 
}$$
\end{fact}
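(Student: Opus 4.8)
The plan is to reduce all four assertions to two ingredients: the faithful flatness of the completion map $R\to\rhat$, and the base-change isomorphism $\rhat\lotimes_R\rhom_R(C,C)\simeq\rhom_{\rhat}(\rhat\lotimes_R C,\rhat\lotimes_R C)$ in $\catd(\rhat)$ valid for homologically finite $C$, which is recorded in Fact~\ref{f4112}. The organizing remark is this: a morphism $\varphi$ of $R$-complexes is an isomorphism in $\catd(R)$ if and only if $\rhat\lotimes_R\varphi$ is an isomorphism in $\catd(\rhat)$. Indeed, flatness of $\rhat$ gives $\HH_i(\rhat\lotimes_R\cone(\varphi))\cong\rhat\otimes_R\HH_i(\cone(\varphi))$ for all $i$, and faithful flatness of $\rhat$ makes the left side vanish for all $i$ precisely when $\cone(\varphi)$ is exact.

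I would first settle homological finiteness. The same computation $\HH_i(\rhat\lotimes_R C)\cong\rhat\otimes_R\HH_i(C)$ shows $\rhat\lotimes_R C$ has bounded homology exactly when $C$ does; and since $\HH_i(C)$ is a finitely generated $R$-module, its base change is a finitely generated $\rhat$-module that is zero exactly when $\HH_i(C)$ is zero, by faithful flatness. Hence $C$ is homologically finite over $R$ if and only if $\rhat\lotimes_R C$ is homologically finite over $\rhat$. Granting this common hypothesis, Fact~\ref{f4112} applies; the point that needs checking is that its isomorphism is \emph{compatible} with homothety morphisms, meaning it carries $\rhat\lotimes_R\chi^R_C$ to $\chi^{\rhat}_{\rhat\lotimes_R C}$. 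This is a naturality statement, and unwinding it on a bounded-above resolution $F$ of $C$ by finitely generated free $R$-modules, it amounts to the trivial observation that base change sends the scalar-multiplication chain map $R\to\hom_R(F,F)$ to the scalar-multiplication chain map $\rhat\to\hom_{\rhat}(\rhat\otimes_R F,\rhat\otimes_R F)$. With the compatibility in hand, the organizing remark applied to $\varphi=\chi^R_C$ gives exactly: $C$ is semidualizing over $R$ iff $\rhat\lotimes_R C$ is semidualizing over $\rhat$.

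The dualizing statement follows by combining this with the parallel fact that $C$ has finite injective dimension over $R$ iff $\rhat\lotimes_R C$ has finite injective dimension over $\rhat$; this is the cited~\cite[(5.6)]{christensen:scatac},~\cite[(V.3.5)]{hartshorne:rad}. The shift-isomorphism statement is~\cite[(1.11)]{frankild:rrhffd}: one direction is immediate from functoriality of $\rhat\lotimes_R-$ together with $\rhat\lotimes_R\shift^iB\simeq\shift^i(\rhat\lotimes_R B)$, and the converse is again the organizing remark, applied to a comparison morphism $\shift^iB\to C$ in $\catd(R)$ whose base change is a quasiisomorphism. Assembling the pieces: $C\mapsto\rhat\lotimes_R C$ takes semidualizing $R$-complexes to semidualizing $\rhat$-complexes and preserves shift-isomorphism classes, hence induces a well-defined function $\s(R)\to\s(\rhat)$; it is injective by the ``only if'' half of the shift-isomorphism statement. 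For commutativity of the square, observe that when $C$ is a semidualizing module, flatness of $\rhat$ identifies $\rhat\lotimes_R C$ in $\catd(\rhat)$ with the degree-$0$ module $\rhat\otimes_R C=\chat$; so under the natural embeddings $\s_0(R)\hookrightarrow\s(R)$ and $\s_0(\rhat)\hookrightarrow\s(\rhat)$ from Fact~\ref{f0214}, the complex-level map restricts to the module-level map $C\mapsto\chat$ of Fact~\ref{f0204'}.

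The step I expect to require the most care is establishing the homothety-compatibility of the base-change isomorphism --- that is, verifying that the isomorphism of Fact~\ref{f4112} is genuinely natural and therefore intertwines the two homothety morphisms, rather than merely being an abstract isomorphism between the two $\rhom$ objects. Once that naturality is nailed down on a finite free resolution, the remaining arguments are routine invocations of faithful flatness.
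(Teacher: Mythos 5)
Your overall strategy — reduce everything to faithful flatness plus the base-change isomorphism of Fact~\ref{f4112}, verify homothety-compatibility on a resolution by finitely generated free modules, then invoke the ``iso over $R$ iff iso after base change'' principle — is exactly the paper's approach, and the first two paragraphs of your proposal are sound. (Two small quibbles there: free resolutions in this paper's conventions are bounded \emph{below}, not above; and since the hypothesis already supplies homological finiteness of $C$, only the forward implication of your finiteness descent is actually needed.)

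The gap is in your treatment of the hard direction of the shift-isomorphism statement. You write that the implication $\rhat\lotimes_R C\simeq\rhat\lotimes_R\shift^iB\Rightarrow C\simeq\shift^iB$ follows from ``the organizing remark, applied to a comparison morphism $\shift^iB\to C$ in $\catd(R)$ whose base change is a quasiisomorphism.'' But the hypothesis only hands you an isomorphism in $\catd(\rhat)$, i.e.\ an element of $\rhat\otimes_R\Hom_{\catd(R)}(\shift^iB,C)$; it does \emph{not} hand you a morphism defined over $R$. Producing such a morphism is precisely the nontrivial content of \cite[(1.11)]{frankild:rrhffd}: one has to combine the identification $\Hom_{\catd(\rhat)}(\rhat\lotimes_R\shift^iB,\rhat\lotimes_R C)\cong\rhat\otimes_R\Hom_{\catd(R)}(\shift^iB,C)$ with an approximation/Nakayama argument (pick $\phi\in\Hom_{\catd(R)}(\shift^iB,C)$ congruent to the given isomorphism modulo $\m$, check that $1\otimes\phi$ is then forced to be an isomorphism), and only \emph{then} does your organizing remark apply to conclude $\phi$ is an isomorphism over $R$. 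As stated, your sketch assumes existence of the very morphism it needs. Either leave that direction entirely to the citation, as the paper does, or supply the approximation step.
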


The following fact compares to~\ref{f0205'}; see also Lemma~\ref{l0201}
and Question~\ref{q}.

\begin{fact} \label{f0205}
If $C$ is a semidualizing $R$-complex and $\pd_R(C)<\infty$, then $C\simeq\shift^iR$
where $i=\inf(C)$ by~\cite[(8.1)]{christensen:scatac}. 
(As in Fact~\ref{f0205'}, this relies on the local assumption on $R$.)
\end{fact}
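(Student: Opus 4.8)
The plan is to imitate the module argument from Fact~\ref{f0205'}, carried out at the level of a minimal free resolution. First, by Fact~\ref{f0203} we may replace $C$ by $\shift^{-i}C$ with $i=\inf(C)$, so it suffices to treat the case $\inf(C)=0$ and show $C\simeq R$. Since $C$ is homologically finite with $\pd_R(C)<\infty$, it admits a minimal free resolution $F\res C$ which is a \emph{bounded} complex of finitely generated free $R$-modules, with $F_j\neq 0$ precisely for $0\leq j\leq p$ where $p=\pd_R(C)$. The goal is then to force $p=0$ and $\rank F_0=1$.

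The next step is to move the semidualizing hypothesis onto $F$. By Fact~\ref{f0202}, the homothety morphism $\chi^R_C$ is represented by $\ol\chi^R_F\colon R\to\hom_R(F,F)$ in $\catc(R)$, and the semidualizing condition says this is a quasiisomorphism; hence $\hom_R(F,F)\simeq R$ in $\catd(R)$. Now $\hom_R(F,F)$ is again a bounded complex of finitely generated free modules, concentrated in degrees $-p$ through $p$, with bottom term $\hom_R(F_p,F_0)\neq 0$ in degree $-p$. The key observation is that, because $F$ is minimal (so $\partial^F(F)\subseteq\m F$), the complex $\hom_R(F,F)$ is itself minimal: its differential $\phi\mapsto\partial^F\phi-(-1)^{|\phi|}\phi\partial^F$ has image inside $\m\cdot\hom_R(F,F)$. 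For any minimal complex $G$ of finitely generated free modules whose lowest nonzero term lies in degree $m$ (so $G_{m-1}=0$), one has $\HH_m(G)=G_m/\partial(G_{m+1})$, which surjects onto $G_m/\m G_m\neq 0$ and is therefore nonzero. Applied to $G=\hom_R(F,F)$ with $m=-p$, this yields $\HH_{-p}(\hom_R(F,F))\neq 0$; but $\hom_R(F,F)\simeq R$ forces $\HH_n=0$ for all $n\neq 0$, so $p=0$.

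With $p=0$, the resolution collapses to $F=F_0$ in degree $0$, so $C\simeq F_0$ is shift-isomorphic to a finitely generated free module, and $\hom_R(F_0,F_0)\cong R$ forces $\rank F_0=1$, i.e.\ $C\simeq R$. Unwinding the normalization gives $C\simeq\shift^iR$ with $i=\inf(C)$. The step carrying the real weight is the minimality of $\hom_R(F,F)$ together with nonvanishing of the bottom homology of a minimal complex; this is precisely where the local hypothesis enters, via Nakayama's lemma, just as the Auslander-Buchsbaum formula does in Fact~\ref{f0205'}. (Over a non-local ring, $\hom_R(C,C)\cong R$ only forces $C$ to be finitely generated projective of rank $1$, which need not be free.) An alternative, closer to Fact~\ref{f0205'}, replaces the last two paragraphs by combining the Auslander-Buchsbaum formula for complexes, $\pd_R(C)+\depth_R(C)=\depth(R)$, with the depth equality $\depth_R(C)=\depth(R)-\inf(C)$ for semidualizing complexes --- the latter obtained by applying $\rhom_R(k,-)$ to $\rhom_R(C,C)\simeq R$ and comparing suprema, using that $\sup\rhom_R(k,R)=-\depth(R)$.
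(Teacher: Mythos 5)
Your proof is correct. Note that the paper gives no inline argument for this fact at all --- it simply cites Christensen \cite[(8.1)]{christensen:scatac} --- so there is no in-paper proof to compare against; but your primary argument does differ from the route the citation (and the parallel module proof in Fact~\ref{f0205'}) takes. The cited argument, like the module case, runs through the Auslander--Buchsbaum formula for complexes together with the depth formula $\depth_R(C)=\depth R-\inf(C)$ for semidualizing complexes, which is precisely the alternative you sketch in your closing sentences. Your main line of attack instead works entirely with a minimal free resolution $F$: it observes that $\hom_R(F,F)$ inherits minimality from $F$, and then invokes the Nakayama-type observation that a nonzero minimal complex of finitely generated free modules has nonvanishing homology at its lowest nonzero degree, forcing $\pd_R(C)=\inf(C)$ directly. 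This is more elementary and self-contained, since it avoids both the Auslander--Buchsbaum formula for complexes and the depth computation for semidualizing complexes. One step you use implicitly and could flag: the identification $\hom_R(F_j,\m F_{j+n-1})=\m\hom_R(F_j,F_{j+n-1})$ and the finiteness of each $\hom_R(F,F)_n$ both rely on $F$ being a \emph{bounded} complex of \emph{finitely generated} free modules, which is exactly what the hypotheses $\pd_R(C)<\infty$ and homological finiteness of $C$ guarantee. Everything else checks out, including the final step that $\hom_R(F_0,F_0)\cong R$ forces $\rank F_0=1$ over the local ring $R$.
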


Here is a version of Definition~\ref{d0202'} for semidualizing complexes.
It originates with the special cases of ``reflexive complexes'' from~\cite{hartshorne:rad,yassemi:gd}.
The definition in this generality is from~\cite{christensen:scatac}.

\begin{defn} \label{d0202}
Let $C$  and $X$ be $R$-complexes. 
The \emph{biduality morphism} associated to $C$ and $X$ in $\catc(R)$
is the morphism
$\smash{\ol\delta}^C_X\colon X\to\hom_R(\hom_R(X,C),C)$
given by $((\smash{\ol\delta}^C_X)_p(x))_q(\{\phi_j\}_{j\in\bbz})=(-1)^{pq}\phi_p(x)$. 
This yields a well-defined \emph{biduality morphism} 
$\delta^C_X\colon X\to\rhom_R(\rhom_R(X,C),C)$
associated to $C$ and $X$ 
in $\catd(R)$.

Assume that $C$ is a semidualizing $R$-complex.
The $R$-complex $X$ is
\emph{$C$-reflexive} when it satisfies the following properties:
\begin{enumerate}[\quad(1)]
\item \label{d0202a}
The complex $X$ is homologically finite;
\item \label{d0202c}
The biduality morphism
$\delta^C_X\colon X\to\rhom_R(\rhom_R(X,C),C)$ in $\catd(R)$ is an 
isomorphism;
and
\item \label{d0202b}
The complex $\rhom_R(X,C)$ is homologically 
bounded, i.e., finite.
\end{enumerate}
\end{defn}

\begin{disc}
When $C$ is a semidualizing $R$-complex, every homologically finite
$R$-complex $X$ has a well-defined $\text{G}_C$-dimension which is finite
precisely when $X$ is  nonzero and $C$-reflexive. 
(Note that this invariant is not described in terms of resolutions.)
We shall not need this invariant
here; the interested reader should consult~\cite{christensen:scatac}.
\end{disc}

\begin{disc}
Avramov and Iyengar~\cite[(1.5)]{avramov:gahc} have shown 
that condition~\eqref{d0202b}
of Definition~\ref{d0202} is redundant when $C=R$. The same proof shows that
this condition is redundant in general. However, the proof of this fact is outside
the scope of the present article, so we continue to state this condition explicitly.
\end{disc}

The next fact shows that, as with the semidualizing property, the 
reflexivity property is independent of the choice of resolutions.

\begin{fact} \label{f0207}
Let $C$ and $X$ be  $R$-complexes and assume that $C$ is semidualizing. 
The biduality morphism
$\delta^C_X\colon X\to\rhom_R(\rhom_R(X,C),C)$ in $\catd(R)$
can be described  using an injective resolution $I$ of $C$,
in which case it is represented by the morphism
$\smash{\ol\delta}^I_X\colon X\to\hom_R(\hom_R(X,I),I)$. 
Compare this with~\cite[(2.1.4)]{christensen:gd}.
As with the semidualizing
property, reflexivity can be detected by any injective resolution
of $C$; and, when $X$ is $C$-reflexive, the reflexivity
is embodied by every  injective resolution.
Here is the essence of the argument. Let $I$ and $J$ be injective resolutions
of $C$. 
Fact~\ref{f0106} implies that
$$\hom_R(X,I)\simeq\rhom_R(X,C)\simeq\hom_R(X,J)$$
and so $\hom_R(X,I)$ is homologically bounded if and only if
$\hom_R(X,J)$ is homologically bounded.
Furthermore, there is a quasiisomorphism $\alpha\colon I\res J$, and this yields the next
commutative diagram in $\catc(R)$
$$\xymatrix{
X \ar[rrr]^-{\smash{\ol\delta}^I_X}\ar[d]_-{\smash{\ol\delta}^J_X}
&&&\hom_R(\hom_R(X,I),I) \ar[d]_{\simeq}^{\hom_R(\hom_R(X,I),\alpha)}\\
\hom_R(\hom_R(X,J),J)\ar[rrr]^{\hom_R(\hom_R(X, \alpha),J)}_-{\simeq}
&&& \hom_R(\hom_R(X,I),J).
}$$
Hence $\smash{\ol\delta}^I_X$ is a quasiisomorphism if and only if
$\smash{\ol\delta}^J_X$ is a quasiisomorphism.
\end{fact}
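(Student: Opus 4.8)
The plan is to run, for the biduality morphism, the same argument that Fact~\ref{f0202} uses for the homothety morphism: fix an injective resolution $I$ of $C$, realize each occurrence of $\rhom_R(-,C)$ concretely by $\hom_R(-,I)$, and then check that this realization carries the morphism $\delta^C_X$ in $\catd(R)$ to the chain map $\smash{\ol\delta}^I_X$ in $\catc(R)$.

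First I would invoke Fact~\ref{f0106}: since $C$ is homologically finite it has an injective resolution $I$ that is a bounded-above complex of injectives, hence a $\mathrm{K}$-injective complex, so $\hom_R(-,I)$ computes $\rhom_R(-,C)$ on all of $\catd(R)$ with no boundedness hypothesis on the argument. In particular $\rhom_R(X,C)\simeq\hom_R(X,I)$, and applying $\hom_R(-,I)$ once more to the complex $\hom_R(X,I)$ yields a natural isomorphism $\rhom_R(\rhom_R(X,C),C)\simeq\hom_R(\hom_R(X,I),I)$ in $\catd(R)$. The point that must be checked is that, under this identification, $\delta^C_X$ corresponds to $\smash{\ol\delta}^I_X\colon X\to\hom_R(\hom_R(X,I),I)$; this is a diagram chase tracking the construction of $\rhom_R$ and of the biduality morphism through the two passages from $\rhom_R(-,C)$ to $\hom_R(-,I)$, entirely parallel to the homothety case in Fact~\ref{f0202}.

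With that in hand the remaining assertions are formal. For two injective resolutions $I$ and $J$ of $C$, the comparison theorem for injective resolutions furnishes a quasiisomorphism $\alpha\colon I\res J$ over $C$; functoriality of $\hom_R$ then produces a commutative square relating $\smash{\ol\delta}^I_X$ and $\smash{\ol\delta}^J_X$ whose other three sides, namely $\hom_R(\hom_R(X,I),\alpha)$, $\hom_R(\hom_R(X,\alpha),J)$, and the identity on $X$, are quasiisomorphisms, so $\smash{\ol\delta}^I_X$ is a quasiisomorphism if and only if $\smash{\ol\delta}^J_X$ is. Likewise $\hom_R(X,I)\simeq\rhom_R(X,C)\simeq\hom_R(X,J)$, so homological boundedness of $\hom_R(X,I)$ does not depend on $I$. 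Since, by the first part, $X$ is $C$-reflexive exactly when $X$ is homologically finite, $\hom_R(X,I)$ is homologically bounded, and $\delta^C_X$ (equivalently $\smash{\ol\delta}^I_X$) is an isomorphism, the criterion is independent of the chosen injective resolution.

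The main obstacle is the verification that $\smash{\ol\delta}^I_X$ represents $\delta^C_X$: this requires knowing that $\hom_R(-,I)$ legitimately computes the \emph{outer} $\rhom_R(-,C)$ applied to the possibly unbounded complex $\hom_R(X,I)$ (handled by working with a $\mathrm{K}$-injective model for $I$), together with checking the compatibility of the chain-level biduality map, signs included, with the comparison morphisms. Both are routine once a $\mathrm{K}$-injective model is fixed, but they are the only genuinely non-formal input; everything afterward is the square-chasing sketched above.
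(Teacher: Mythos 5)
Your proposal is correct and takes essentially the same approach as the paper: realize $\rhom_R(-,C)$ via a bounded-above complex $I$ of injectives, verify that $\smash{\ol\delta}^I_X$ represents $\delta^C_X$, and then compare two resolutions by a quasiisomorphism $\alpha\colon I\res J$ using the same commutative square with sides $\hom_R(\hom_R(X,I),\alpha)$ and $\hom_R(\hom_R(X,\alpha),J)$. You are slightly more explicit than the paper about the $\mathrm{K}$-injectivity point needed so that $\hom_R(-,I)$ legitimately computes the outer $\rhom_R(-,C)$ on the possibly unbounded complex $\hom_R(X,I)$, but otherwise the two arguments coincide.
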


We next compare Definition~\ref{d0202} with the corresponding notions from
Section~\ref{sec12}.

\begin{fact} \label{f0208}
Let $C$ be a semidualizing $R$-module,
and let $G$ be a finitely generated $R$-module.
If  $G$ is totally $C$-reflexive, then it
is $C$-reflexive as a complex. 
Indeed, the following isomorphisms
imply  that $\rhom_R(G,C)$ is homologically bounded.
$$\HH_{i}(\rhom_R(G,C))
\cong\ext^{-i}(G,C)
\cong\begin{cases} 0 & \text{if $i\neq 0$} \\ \hom_R(G,C) & \text{if $i=0$.}\end{cases}$$
(See Fact~\ref{f0106}.)
Fact~\ref{f0110} explains the first isomorphism in the next sequence
\begin{align*}
\rhom_R(G,C)\simeq\HH_0(\rhom_R(G,C))&\cong\ext^0_R(G,C)\cong\hom_R(G,C)\\
\HH_i(\rhom_R(\rhom_R(G,C),C))
&\cong\HH_i(\rhom_R(\hom_R(G,C),C))\\
&\cong\ext^{-i}_R(\hom_R(G,C),C)\\
&\cong\begin{cases} 0 & \text{if $i\neq 0$} \\ G & \text{if $i=0$}\end{cases}
\end{align*}
and the others follow from the previous display and Fact~\ref{f0106}.
Thus, for all $i\neq 0$, the function
$\HH_i(\delta^C_G)\colon\HH_i(G)\to\HH_i(\rhom_R(\rhom_R(G,C),C))$
maps from 0 to 0 and thus
is an isomorphism.
To show that $G$ is $C$-reflexive, it remains to show that the map
$\HH_0(\delta^C_G)$
is  an isomorphism.
Check that there is a commutative diagram
$$\xymatrix{
G \ar[rr]^-{\smash{\ol\delta}^C_G}_-{\cong}\ar[d]_{\cong}
&&\hom_R(\hom_R(G,C),C) \ar[d]^{\cong} \\
\HH_0(G) \ar[rr]^-{\HH_0(\delta^C_G)}
&&\HH_0(\rhom_R(\rhom_R(G,C),C))
}$$
where the unspecified isomorphisms are essentially from Fact~\ref{f0103}.
Thus $\HH_0(\delta^C_G)$
is  an isomorphism as desired.

More generally, a finitely generated $R$-module has finite
$\text{G}_C$-dimension if and only if it is $C$-reflexive as an $R$-complex.
(Thus, the converse of the second sentence of the previous paragraph fails in general.)
Furthermore, a homologically finite
$R$-complex $X$ is $C$-reflexive if and only if there is an isomorphism
$X\simeq H$ in $\catd(R)$ where $H$ is a bounded complex of totally
$C$-reflexive $R$-modules.
See~\cite[(3.1)]{holm:smarghd}. 
\end{fact}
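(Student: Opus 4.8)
The plan is to prove the module-level assertion directly from the definitions, and then to bootstrap it to the statements about arbitrary $\text{G}_C$-dimension and about homologically finite complexes by a dévissage argument in $\catd(R)$, invoking the literature for the two delicate converse implications.

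First I would verify the three conditions of Definition~\ref{d0202} for a totally $C$-reflexive module $G$. Homological finiteness is immediate. Using $\HH_{-i}(\rhom_R(G,C)) \cong \ext^i_R(G,C)$ from Fact~\ref{f0106} together with the vanishing $\ext^i_R(G,C) = 0$ for $i \geq 1$ built into Definition~\ref{d0202'}, the complex $\rhom_R(G,C)$ has homology concentrated in degree $0$, so $\rhom_R(G,C) \simeq \hom_R(G,C)$ in $\catd(R)$ by Fact~\ref{f0110}; in particular it is homologically bounded. Substituting this and running the same computation with $\ext^i_R(\hom_R(G,C),C)$ shows $\rhom_R(\rhom_R(G,C),C)$ also has homology in degree $0$ only, namely $\hom_R(\hom_R(G,C),C)$, which $\smash{\ol\delta}^C_G$ identifies with $G$. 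Thus the source and the target of $\delta^C_G$ have homology concentrated in degree $0$, and the remaining point is that $\HH_0(\delta^C_G)$ coincides, under the natural comparison isomorphisms of Fact~\ref{f0103}, with the module biduality map $\smash{\ol\delta}^C_G$ --- a commutative-square check. Since $\smash{\ol\delta}^C_G$ is an isomorphism by hypothesis, $\delta^C_G$ is an isomorphism in $\catd(R)$, and $G$ is $C$-reflexive as a complex.

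Next I would observe that the class of $C$-reflexive $R$-complexes is a triangulated subcategory of $\catd(R)$. It is closed under suspension: Fact~\ref{f4112} moves $\shift^i$ past $\rhom_R(-,C)$ compatibly with the biduality morphisms, exactly as in Fact~\ref{f0203}. It is closed under mapping cones: applying $\rhom_R(-,C)$ and $\rhom_R(\rhom_R(-,C),C)$ to a triangle produces triangles, homological finiteness and boundedness of $\rhom_R(-,C)$ propagate along the resulting long exact homology sequences, and the biduality morphism is a natural transformation of triangulated functors, so the five lemma upgrades ``isomorphism on two vertices'' to ``isomorphism on the third''. By the previous paragraph this subcategory contains every totally $C$-reflexive module and hence every suspension of one. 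Consequently, if $M$ has finite $\text{G}_C$-dimension, a bounded resolution $0 \to G_n \to \cdots \to G_0 \to M \to 0$ by totally $C$-reflexive modules realizes $M$ as isomorphic in $\catd(R)$ to a bounded complex of totally $C$-reflexive modules, and the brutal-truncation triangles present the latter as an iterated mapping cone of the shifted terms $\shift^i G_i$, so $M$ is $C$-reflexive; the same truncation dévissage applied to an arbitrary bounded complex $H$ of totally $C$-reflexive modules shows $H$ is $C$-reflexive.

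The two converse implications --- that a $C$-reflexive module has finite $\text{G}_C$-dimension, and that a $C$-reflexive homologically finite complex $X$ is isomorphic in $\catd(R)$ to a bounded complex of totally $C$-reflexive modules --- are the real content, and I expect this to be the main obstacle. The strategy is to start from a bounded-below resolution of $X$ by finitely generated free modules (which are totally $C$-reflexive by Fact~\ref{f0208'}) and to pass to a sufficiently deep soft truncation: homological boundedness of $\rhom_R(X,C)$ forces $\ext^i_R(-,C)$ to vanish on that truncation for $i \geq 1$, homological boundedness of $\rhom_R(\rhom_R(X,C),C)$ forces the companion vanishing for the $C$-dual, and a dimension-shifting argument then shows the terms of the truncation are totally $C$-reflexive. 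For a module $M$ this produces an honest bounded totally $C$-reflexive resolution, that is, finite $\text{G}_C$-dimension. This truncation bookkeeping is exactly what is carried out in~\cite[(3.1)]{holm:smarghd}, which I would cite rather than reproduce.
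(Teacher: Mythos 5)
Your argument for the module-level assertion is essentially identical to the paper's: you use $\HH_{-i}(\rhom_R(G,C))\cong\ext^i_R(G,C)$ and the total-reflexivity vanishing to reduce both $\rhom_R(G,C)$ and its double dual to degree-$0$ modules, and then check via a commutative square that $\HH_0(\delta^C_G)$ coincides with the module biduality isomorphism. For the two general statements the paper simply cites \cite[(3.1)]{holm:smarghd}, as do you; the dévissage you sketch (closure of the $C$-reflexive complexes under suspensions and cones, then brutal truncation) is a correct and standard way to obtain the easy direction, and your caveat that the converse implications are the genuine content and require the truncation bookkeeping of the cited reference is exactly right.
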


The next fact includes  versions of~\ref{f0208'} and~\ref{f0211'} for semidualizing complexes.

\begin{fact} \label{f0299}
Let $C$ be a semidualizing $R$-complex. Every finitely generated free
$R$-module is $C$-reflexive, as is $C$ itself.
The  essential point of the proof is that the following isomorphisms
are compatible with the corresponding biduality morphisms:
\begin{align*}
\rhom_R(\rhom_R(R,C),C)
&\simeq\rhom_R(C,C)\simeq R \\
\rhom_R(\rhom_R(C,C),C)
&\simeq\rhom_R(R,C)\simeq C.
\end{align*}
See~\cite[(2.8)]{christensen:scatac} and Fact~\ref{f4112}.

If $X$ is a homologically
finite $R$-complex of finite projective dimension, then $X$ is $C$-reflexive
by~\cite[(2.9)]{christensen:scatac}.
If $D$ is a dualizing $R$-complex, 
then every homologically finite $R$-complex is $D$-reflexive.
Conversely, if the residue field
$k$ is $C$-reflexive, then $C$ is dualizing.
See~\cite[(8.4)]{christensen:scatac} or~\cite[(V.2.1)]{hartshorne:rad}.
\end{fact}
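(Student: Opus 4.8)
The plan is to verify, for each of the four assertions, the relevant clauses of Definition~\ref{d0202}, bootstrapping from the semidualizing hypothesis that $\chi^R_C$ is an isomorphism, i.e.\ $\rhom_R(C,C)\simeq R$. For $C$ itself this hypothesis says $\rhom_R(C,C)$ is homologically finite, and then
$$\rhom_R(\rhom_R(C,C),C)\simeq\rhom_R(R,C)\simeq C;$$
I would check, exactly as in Facts~\ref{f0202} and~\ref{f0207}, that the composite of these isomorphisms \emph{is} the biduality morphism $\delta^C_C$, whence $\delta^C_C$ is an isomorphism and $C$ is $C$-reflexive. For $R$ one uses $\rhom_R(R,C)\simeq C$ in the inner slot and $\rhom_R(C,C)\simeq R$ in the outer slot; for a finitely generated free module $R^n$ one adds the additivity of $\rhom_R(-,C)$ and of $\rhom_R(\rhom_R(-,C),C)$, noting $\rhom_R(R^n,C)\simeq C^n$ is homologically finite. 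The only delicate point is the compatibility of the displayed isomorphisms with the biduality morphism, a routine but sign-sensitive chain-level computation; cf.~\cite[(2.8)]{christensen:scatac}.

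For the third assertion I would show that the full subcategory of $\catd(R)$ consisting of the $C$-reflexive complexes is thick. Closure under shifts is immediate (Fact~\ref{f4112}). For closure under mapping cones, observe that $\delta^C$ is a natural transformation from the identity functor to $\rhom_R(\rhom_R(-,C),C)$, and the latter sends distinguished triangles to distinguished triangles because $\rhom_R(-,C)$ is exact; hence a triangle $X\to Y\to Z\to\shift X$ yields a morphism of distinguished triangles with components $\delta^C_X,\delta^C_Y,\delta^C_Z$, and if two of these are isomorphisms so is the third. Likewise $\rhom_R(Z,C)$ sits in a distinguished triangle with $\rhom_R(X,C)$ and $\rhom_R(Y,C)$, so homological boundedness propagates two-out-of-three, and homological finiteness propagates along the original triangle; closure under direct summands is then formal. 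Since this thick subcategory contains $R$ by the first assertion, and the thick subcategory of $\catd(R)$ generated by $R$ is exactly the class of homologically finite complexes of finite projective dimension, every such complex is $C$-reflexive.

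For the fourth assertion, suppose first $D$ is dualizing, so $\id_R D<\infty$. Then $\rhom_R(X,D)$ is homologically finite for every homologically finite $X$, and $\rhom_R(-,D)$ is way-out in both directions; hence the covariant functor $\rhom_R(\rhom_R(-,D),D)$ is way-out in both directions. By the first assertion the biduality natural transformation $\delta^D$ is an isomorphism on $R$, hence on every finitely generated free module, so the way-out functor lemma (see~\cite[(V.2.1)]{hartshorne:rad}) shows $\delta^D_X$ is an isomorphism for every homologically finite $X$; together with the homological finiteness of $\rhom_R(X,D)$, this shows that $X$ is $D$-reflexive. For the converse, if $k$ is $C$-reflexive then in particular $\rhom_R(k,C)$ is homologically bounded, so $\ext^i_R(k,C)=0$ for $i\gg0$; by Bass's characterization of finite injective dimension in its form for homologically finite complexes (cf.~\cite[(8.4)]{christensen:scatac}), this forces $\id_R C<\infty$, and since $C$ is already semidualizing it is dualizing.

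I expect the main obstacle to be the step of the fourth assertion that upgrades ``$R$ is $D$-reflexive'' to ``every homologically finite complex is $D$-reflexive'': the thick-subcategory argument used for the finite-projective-dimension case only reaches perfect complexes, so one genuinely has to exploit $\id_R D<\infty$---either through the way-out functor lemma as above, or through a d\'evissage along soft-truncation triangles reducing the claim to the statement that every finitely generated module has finite $\text{G}_D$-dimension (compare Fact~\ref{f0211'}). The remaining work---the compatibility of canonical isomorphisms with biduality morphisms, and the bookkeeping of homology degrees in the way-out estimates---is routine but index- and sign-sensitive.
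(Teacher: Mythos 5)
Your proof is correct and, for the first two assertions, follows exactly the route the paper sketches: compute $\rhom_R(\rhom_R(-,C),C)$ on $R$ and on $C$ using cancellation and the semidualizing isomorphism, and verify the displayed isomorphisms are the biduality morphisms. For the remaining assertions the paper only cites \cite[(2.9),(8.4)]{christensen:scatac} and \cite[(V.2.1)]{hartshorne:rad}, and what you supply is precisely the content of those citations: the thick-subcategory argument (the $C$-reflexive complexes form a thick subcategory of $\catd(R)$ containing $R$, hence all perfect complexes, which over a local noetherian ring are exactly the homologically finite complexes of finite projective dimension); the way-out-functor lemma for the dualizing implication, using $\id_R D<\infty$ to get way-outness in both directions and bootstrapping from $\delta^D_R$ being an isomorphism; and Bass's injective-dimension criterion for the converse. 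Your closing remark correctly identifies why the perfect-complex argument does not reach the dualizing case and why finiteness of $\id_R D$ is essential there. This is a faithful expansion of the references the paper points to rather than a different route.
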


As with the semidualizing property, reflexivity is
independent of shift.

\begin{fact} \label{f0209}
Let $C$ be a semidualizing $R$-complex.
It is straightforward to show that an $R$-complex $X$ is
$C$-reflexive  if and only if some (equivalently, every) shift $\shift^iX$
is $\shift^jC$-reflexive for some (equivalently, every) integer $j$. 
The point is that Fact~\ref{f4112} yields natural isomorphisms
\begin{align*}
\rhom_R(\shift^iX,\shift^jC)
&\simeq\shift^{j-i}\rhom_R(X,C) \\
\rhom_R(\rhom_R(\shift^iX,\shift^jC),\shift^jC)
&\simeq\rhom_R(\shift^{j-i}\rhom_R(X,C),\shift^jC) \\
&\simeq\shift^{j-(j-i)}\rhom_R(\rhom_R(X,C),C) \\
&\simeq\shift^{i}\rhom_R(\rhom_R(X,C),C) 
\end{align*}
that are compatible with $\delta^C_X$ and $\delta^{\shift^jC}_{\shift^iX}$.
\end{fact}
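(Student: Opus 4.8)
The plan is to fix integers $i$ and $j$ and prove the biconditional that $X$ is $C$-reflexive if and only if $\shift^i X$ is $\shift^j C$-reflexive; since this will hold for arbitrary $i,j$, the full ``some (equivalently, every)'' statement follows at once, as ``$\shift^i X$ is $\shift^j C$-reflexive for some $(i,j)$'' then forces ``$X$ is $C$-reflexive'', which in turn forces ``$\shift^{i'} X$ is $\shift^{j'} C$-reflexive for every $(i',j')$''. I would check the biconditional one defining condition at a time, following Definition~\ref{d0202}.

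First I would dispatch the cheap conditions. Homological finiteness of $\shift^i X$ is equivalent to that of $X$ because $\HH_n(\shift^i X)\cong\HH_{n-i}(X)$; and $\shift^j C$ is semidualizing exactly when $C$ is, by Fact~\ref{f0203}, so ``$\shift^j C$-reflexive'' is meaningful. For the requirement that $\rhom_R(X,C)$ be homologically finite, I would invoke the natural isomorphism $\rhom_R(\shift^i X,\shift^j C)\simeq\shift^{j-i}\rhom_R(X,C)$ from Fact~\ref{f4112}: a complex and its shifts are simultaneously homologically bounded, so $\rhom_R(\shift^i X,\shift^j C)$ is homologically finite if and only if $\rhom_R(X,C)$ is. Applying Fact~\ref{f4112} twice more yields
$$\rhom_R(\rhom_R(\shift^i X,\shift^j C),\shift^j C)\simeq\shift^{i}\rhom_R(\rhom_R(X,C),C),$$
which identifies the target of the relevant biduality morphism up to a shift by $i$, matching the shift on the source $\shift^i X$.

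The heart of the argument is to show these identifications are compatible with biduality, i.e., that the square
$$\xymatrix{
\shift^i X \ar[rr]^-{\delta^{\shift^j C}_{\shift^i X}}\ar@{=}[d] && \rhom_R(\rhom_R(\shift^i X,\shift^j C),\shift^j C)\ar[d]^{\simeq}\\
\shift^i X \ar[rr]^-{\shift^i\delta^C_X} && \shift^{i}\rhom_R(\rhom_R(X,C),C)
}$$
commutes in $\catd(R)$. Granting this, $\delta^{\shift^j C}_{\shift^i X}$ is an isomorphism in $\catd(R)$ if and only if $\shift^i\delta^C_X$ is, and since $\shift^i$ is an autoequivalence of $\catd(R)$ the latter holds if and only if $\delta^C_X$ is an isomorphism. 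Assembling the three equivalences gives the biconditional.

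The main obstacle is exactly that last commutativity. I would verify it in $\catc(R)$, as in Fact~\ref{f0207}: fix an injective resolution $I$ of $C$, so that $\shift^j I$ resolves $\shift^j C$ and the two biduality morphisms are represented by the honest chain maps $\smash{\ol\delta}^I_X$ and $\smash{\ol\delta}^{\shift^j I}_{\shift^i X}$ of Definition~\ref{d0202}. Transporting along the canonical isomorphisms $\hom_R(\shift^i X,\shift^j I)\cong\shift^{j-i}\hom_R(X,I)$ of Hom-complexes, one compares the two composites. This is a sign-bookkeeping exercise: one must keep track of the $(-1)^{pq}$ appearing in the formula for $\smash{\ol\delta}$ together with the sign conventions built into the shift functor and into the Hom-complex differential, and confirm the composites agree termwise. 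It is routine but must be done with care; everything else in the argument is formal.
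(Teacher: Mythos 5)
Your proposal is correct and follows essentially the same route as the paper: both arguments hinge on the shift isomorphisms of Fact~\ref{f4112} together with the observation that they intertwine the biduality morphisms $\delta^C_X$ and $\delta^{\shift^jC}_{\shift^iX}$, so that the three conditions of Definition~\ref{d0202} transfer between $(X,C)$ and $(\shift^iX,\shift^jC)$. You merely make explicit what the paper leaves implicit — checking each condition in turn, exhibiting the commutative square in $\catd(R)$, and noting that the compatibility is verified at the chain level on an injective resolution as in Fact~\ref{f0207}.
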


The next fact is a version of~\ref{f0210'} for semidualizing complexes.

\begin{fact} \label{f0210}
If $C$ is a semidualizing $R$-complex, then
a given homologically finite $R$-complex $X$ is $C$-reflexive if and only if
the base-changed complex $\rhat\lotimes_RX$ is $\rhat\lotimes_RC$-reflexive;
see~\cite[(5.10)]{christensen:scatac}.
The main point of the proof  is that 
Fact~\ref{f4112} provides the following isomorphisms in $\catd(\rhat)$ 
\begin{align*}
\rhom_{\rhat}(\rhat\lotimes_RX,\rhat\lotimes_RC)
&\simeq\rhat\lotimes_R\rhom_R(X,C) \\
\rhom_{\rhat}(\rhom_{\rhat}(\rhat\lotimes_RX,\rhat\lotimes_RC),\rhat\lotimes_RC)
\hspace{-2cm}\\
&\simeq\rhom_{\rhat}(\rhat\lotimes_R\rhom_R(X,C),\rhat\lotimes_RC) \\
&\simeq\rhat\lotimes_R\rhom_R(\rhom_R(X,C),C)
\end{align*}
and  that these isomorphisms are compatible with
$\delta^C_X$ and $\delta^{\rhat\lotimes_RC}_{\rhat\lotimes_RX}$.
\end{fact}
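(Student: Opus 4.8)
The plan is to show that the three defining conditions of $C$-reflexivity from Definition~\ref{d0202} each transfer across the faithfully flat ring map $R\to\rhat$. First note that $\rhat\lotimes_RC$ is semidualizing for $\rhat$ by Fact~\ref{f0204}, so the notion ``$\rhat\lotimes_RC$-reflexive'' is meaningful. Condition~\eqref{d0202a} causes no trouble: $X$ is homologically finite by hypothesis, and then $\rhat\lotimes_RX$ is homologically finite over $\rhat$ because $\HH_i(\rhat\lotimes_RX)\cong\rhat\otimes_R\HH_i(X)$. The engine for the other two conditions is faithful flatness of $\rhat$ over $R$: for every $R$-complex $Y$ one has $\HH_i(Y)=0$ if and only if $\rhat\otimes_R\HH_i(Y)\cong\HH_i(\rhat\lotimes_RY)=0$; consequently $Y$ is homologically bounded if and only if $\rhat\lotimes_RY$ is, and a morphism $f$ in $\catd(R)$ is an isomorphism if and only if $\rhat\lotimes_Rf$ is an isomorphism in $\catd(\rhat)$ (apply the first observation to $\cone(f)$, using $\rhat\lotimes_R\cone(f)\simeq\cone(\rhat\lotimes_Rf)$).

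For condition~\eqref{d0202b}, I would invoke the first isomorphism supplied by Fact~\ref{f4112}, namely $\rhom_{\rhat}(\rhat\lotimes_RX,\rhat\lotimes_RC)\simeq\rhat\lotimes_R\rhom_R(X,C)$ in $\catd(\rhat)$; the hypothesis that $X$ is homologically finite is exactly what puts us in the range where this evaluation morphism is an isomorphism. Combined with faithful descent for homological boundedness, this shows that $\rhom_R(X,C)$ is homologically bounded if and only if $\rhom_{\rhat}(\rhat\lotimes_RX,\rhat\lotimes_RC)$ is.

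For condition~\eqref{d0202c}, apply the isomorphisms of Fact~\ref{f4112} twice to obtain $\rhom_{\rhat}(\rhom_{\rhat}(\rhat\lotimes_RX,\rhat\lotimes_RC),\rhat\lotimes_RC)\simeq\rhat\lotimes_R\rhom_R(\rhom_R(X,C),C)$ in $\catd(\rhat)$. The essential claim --- and the one point that actually requires work --- is that this isomorphism is compatible with the biduality morphisms, i.e.\ that together with $\delta^{\rhat\lotimes_RC}_{\rhat\lotimes_RX}$ and $\rhat\lotimes_R\delta^C_X$ it forms a commutative square in $\catd(\rhat)$. Granting this, faithful descent for isomorphisms gives: $\delta^C_X$ is an isomorphism in $\catd(R)$ if and only if $\rhat\lotimes_R\delta^C_X$ is an isomorphism in $\catd(\rhat)$, if and only if $\delta^{\rhat\lotimes_RC}_{\rhat\lotimes_RX}$ is. Assembling the three conditions then finishes the proof.

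The main obstacle is thus the compatibility of the tensor-evaluation isomorphisms with biduality. I would resolve it with explicit representatives: choose a bounded-below complex $F$ of finitely generated free $R$-modules with $F\res X$ and a bounded-below complex $G$ of finitely generated free $R$-modules with $G\res C$, so that $\rhat\otimes_RF$ and $\rhat\otimes_RG$ resolve $\rhat\lotimes_RX$ and $\rhat\lotimes_RC$. The relevant evaluation map is assembled from the adjunction isomorphism $\hom_{\rhat}(\rhat\otimes_RF,-)\cong\hom_R(F,-)$ and the canonical map $\rhat\otimes_R\hom_R(F,G)\to\hom_R(F,\rhat\otimes_RG)$, which is an isomorphism since $F$ is degreewise finite free; one then checks, using the explicit formula for the biduality morphism $\smash{\ol\delta}^C_X$ in Definition~\ref{d0202}, that the resulting square of chain maps commutes on the nose. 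A tidier route is to observe that tensor-evaluation is a natural transformation compatible with composition of $\rhom$'s and with the unit and counit of Hom--tensor adjunction, so that compatibility with the adjunction-built biduality morphism is formal. Either way, this bookkeeping is the only real content, everything else being faithful descent together with the ready-made isomorphisms of Fact~\ref{f4112}.
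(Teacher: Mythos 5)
Your proposal is correct and follows essentially the same route as the paper: the base-change isomorphisms of Fact~\ref{f4112}, their compatibility with the biduality morphisms, and faithful flatness of $R\to\rhat$ for descent are precisely the ingredients the paper's sketch points to (with reference to~\cite[(5.10)]{christensen:scatac}). You have simply made explicit the three-condition bookkeeping and the faithful-descent mechanism that the paper leaves implicit; the identification of the compatibility of tensor-evaluation with biduality as the one nontrivial verification is also where the paper places the weight.
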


Here is the ordering on $\s(R)$ used in our main results.

\begin{defn} \label{d0204}
Given two classes $\cl B,\cl C\in\s(R)$, we write
$\cl B\tri\cl C$ when $C$ is $B$-reflexive;
we write
$\cl B\trin\cl C$ when $\cl B\tri\cl C$ and $\cl B\neq\cl C$.
\end{defn}

The following fact compares this relation with the one from Definition~\ref{d0204'}.

\begin{fact} \label{f0298}
Combining Fact~\ref{f0217'} and the last paragraph of Fact~\ref{f0208}, we see that,
if $B$ and $C$ are semidualizing $R$-modules, then
$\cl B\tri\cl C$ in $\s(R)$ if and only if $\cl B\tri\cl C$ in $\s_0(R)$,
and $\cl B\trin\cl C$ in $\s(R)$ if and only if $\cl B\trin\cl C$ in $\s_0(R)$
That is, the map $\s_0(R)\hookrightarrow\s(R)$ perfectly
respects the orderings on these two sets.
\end{fact}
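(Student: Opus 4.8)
The plan is to unwind both orderings down to the underlying reflexivity notions and then splice together the two ingredients named in the statement. Fix semidualizing $R$-modules $B$ and $C$. By Definition~\ref{d0204}, the relation $\cl B\tri\cl C$ holds in $\s(R)$ exactly when $C$ is $B$-reflexive as an $R$-complex in the sense of Definition~\ref{d0202}; by Definition~\ref{d0204'}, the relation $\cl B\tri\cl C$ holds in $\s_0(R)$ exactly when $C$ is totally $B$-reflexive as an $R$-module in the sense of Definition~\ref{d0202'}. So the first asserted equivalence is precisely the claim that, for the module $C$, being $B$-reflexive as a complex is the same as being totally $B$-reflexive as a module.

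To prove this, I would chain two biconditionals. The last paragraph of Fact~\ref{f0208} states that a finitely generated $R$-module has finite $\text{G}_B$-dimension if and only if it is $B$-reflexive as an $R$-complex; applied to $C$, this says $C$ is $B$-reflexive as a complex if and only if $C$ has finite $\text{G}_B$-dimension. Since $C$ is itself semidualizing, Fact~\ref{f0217'} (which rests on Facts~\ref{f0201'} and~\ref{f0299'}) says that $C$ has finite $\text{G}_B$-dimension if and only if $C$ is totally $B$-reflexive. Combining the two gives the equivalence of the previous paragraph, hence $\cl B\tri\cl C$ in $\s(R)$ if and only if $\cl B\tri\cl C$ in $\s_0(R)$.

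For the strict relation, it remains to check that the equality $\cl B=\cl C$ means the same thing in the two sets. If $\cl B=\cl C$ in $\s_0(R)$, i.e.\ $B\cong C$ as $R$-modules, then clearly $\cl B=\cl C$ in $\s(R)$. Conversely, a shift-isomorphism $B\simeq\shift^i C$ in $\catd(R)$ forces $i=0$, since $\HH_j(B)=0=\HH_j(C)$ for all $j\neq 0$ while $\HH_j(\shift^i C)=0$ for all $j\neq i$; then $B\simeq C$ in $\catd(R)$ and hence $B\cong C$ as $R$-modules. (Equivalently, this is the injectivity of $\s_0(R)\hookrightarrow\s(R)$ recorded in Fact~\ref{f0214}.) Thus $\cl B\neq\cl C$ is detected identically in $\s(R)$ and $\s_0(R)$, and combined with the previous paragraph this gives $\cl B\trin\cl C$ in $\s(R)$ if and only if $\cl B\trin\cl C$ in $\s_0(R)$. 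The final sentence of the Fact is then just a restatement of what has been shown. I do not anticipate a genuine obstacle: the argument is formal bookkeeping, and its one substantive input — the dictionary between "totally $B$-reflexive $R$-module" and "$B$-reflexive $R$-complex" — is exactly the content of Fact~\ref{f0208} that we are allowed to cite.
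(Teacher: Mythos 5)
Your proof is correct and follows exactly the route the paper intends: you chain Definition~\ref{d0204} through the last paragraph of Fact~\ref{f0208} (complex-reflexivity $\Leftrightarrow$ finite $\text{G}_B$-dimension) and then Fact~\ref{f0217'} (finite $\text{G}_B$-dimension $\Leftrightarrow$ totally $B$-reflexive, since $C$ is semidualizing) to Definition~\ref{d0204'}, and you correctly supplement this for the strict relation by verifying that equality of classes agrees in $\s_0(R)$ and $\s(R)$ via the injectivity of $\s_0(R)\hookrightarrow\s(R)$. No gaps.
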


The next facts compare with~\ref{f0216'} and~\ref{f0218'}.

\begin{fact} \label{f0216}
Let $C$ be a semidualizing $R$-complex.
Fact~\ref{f0299} implies that $\cl C\tri\cl R$, and
if $D$ is a dualizing $R$-complex, then $\cl D\tri\cl C$.

Fact~\ref{f0210} says that 
$\cl B\tri\cl C$ in $\s(R)$ if and only if $\cl{\rhat\lotimes_RB}\tri\cl{\rhat\lotimes_RC}$ in $\s(\rhat)$;
also 
$\cl B\trin\cl C$ in $\s(R)$ if and only if $\cl{\rhat\lotimes_RB}\trin\cl{\rhat\lotimes_RC}$ in $\s(\rhat)$
by Fact~\ref{f0204}.
So, the injection $\s(R)\hookrightarrow\s(\rhat)$ perfectly 
respects the orderings on these two sets.
\end{fact}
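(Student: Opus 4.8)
The plan is to verify the four assertions in turn; each is a direct translation of an earlier fact through Definition~\ref{d0204}, so the argument is really a sequence of unwindings of definitions rather than anything substantive.

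First, for $\cl C\tri\cl R$: by Definition~\ref{d0204} this says precisely that $R$ is $C$-reflexive, and Fact~\ref{f0299} records that every finitely generated free $R$-module---in particular $R$ itself---is $C$-reflexive. Second, for $\cl D\tri\cl C$ when $D$ is dualizing: this says $C$ is $D$-reflexive; since $C$ is semidualizing it is homologically finite by Definition~\ref{d0201}, and Fact~\ref{f0299} says every homologically finite $R$-complex is $D$-reflexive. Both of these are then immediate.

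Third, for the base-change equivalence for $\tri$: I would unwind Definition~\ref{d0204} in $\s(R)$ to rewrite $\cl B\tri\cl C$ as ``$C$ is $B$-reflexive'', apply Fact~\ref{f0210} to obtain the equivalence with ``$\rhat\lotimes_R C$ is $(\rhat\lotimes_R B)$-reflexive'', and then re-fold this using Definition~\ref{d0204} in $\s(\rhat)$ to read off $\cl{\rhat\lotimes_R B}\tri\cl{\rhat\lotimes_R C}$. For the $\trin$ version, I would combine this with the equivalence $\cl B\ne\cl C\iff\cl{\rhat\lotimes_R B}\ne\cl{\rhat\lotimes_R C}$, which is exactly the injectivity of the function $\s(R)\hookrightarrow\s(\rhat)$ from Fact~\ref{f0204}. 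The closing sentence about ``perfectly respecting the orderings'' is then just a summary of these two equivalences.

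The only point needing care---and it is bookkeeping, not a genuine obstacle---is making sure the symbols $\cl{\rhat\lotimes_R B}$ and $\cl{\rhat\lotimes_R C}$ are legitimate elements of $\s(\rhat)$: one needs that $\rhat\lotimes_R B$ and $\rhat\lotimes_R C$ are semidualizing for $\rhat$ (Fact~\ref{f0204}) and that shift-isomorphism classes are handled consistently, via $\rhat\lotimes_R\shift^iB\simeq\shift^i(\rhat\lotimes_R B)$ together with the shift-independence of reflexivity from Fact~\ref{f0209}. Beyond invoking the cited facts, there is no real mathematical content here.
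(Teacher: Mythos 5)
Your proposal is correct and follows exactly the same route as the paper: each claim is a direct unwinding of Definition~\ref{d0204} combined with the cited facts (\ref{f0299} for the first two assertions, \ref{f0210} for the base-change equivalence, and the injectivity of $\s(R)\hookrightarrow\s(\rhat)$ from Fact~\ref{f0204} for the strict version). The extra bookkeeping remarks about well-definedness under shift (Fact~\ref{f0209}) and about $\rhat\lotimes_RB$, $\rhat\lotimes_RC$ being semidualizing over $\rhat$ are sound and correctly attributed, though the paper treats them as implicit.
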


\begin{fact} \label{f0218}
Let $B$ and $C$ be semidualizing $R$-complexes such that $C$ is $B$-reflexive,
that is, such that $\cl B\tri\cl C$. This implies that the complex
$\rhom_R(C,B)$ is homologically finite, by definition. Moreover~\cite[(2.11)]{christensen:scatac} 
shows that $\rhom_R(C,B)$ is
semidualizing and $B$-reflexive.
The main point of the proof is that there is a
sequence of isomorphisms
\begin{align*}
\rhom_R(\rhom_R(C,B),\rhom_R(C,B)) \hspace{-2cm} \\
&\simeq\rhom_R(\rhom_R(C,B)\lotimes_RC,B) \\
&\simeq\rhom_R(C,\rhom_R(\rhom_R(C,B),B)) \\
&\simeq\rhom_R(C,C) \\
&\simeq R.
\end{align*}
The first two isomorphisms are  Hom-tensor adjointness~\ref{f4112}.
The third isomorphism is from the assumption that $C$ is $B$-reflexive,
and the fourth isomorphism is from the fact that $C$ is semidualizing.
\end{fact}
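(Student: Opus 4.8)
The plan is to verify, for the complex $C^{\dagger}:=\rhom_R(C,B)$, the conditions of Definitions~\ref{d0201} and~\ref{d0202}: that it is homologically finite, that its homothety morphism is an isomorphism in $\catd(R)$, and that it is $B$-reflexive. First I would dispose of homological finiteness. The homology of $C^{\dagger}$ is degreewise finitely generated because $B$ and $C$ are homologically finite over the noetherian ring $R$, and it is bounded because $C$ is $B$-reflexive --- this boundedness is precisely condition~\eqref{d0202b} of Definition~\ref{d0202} applied to $C$. I would also record at the outset the isomorphism $\rhom_R(C^{\dagger},B)\simeq C$ in $\catd(R)$, realized by the biduality morphism $\delta^B_C\colon C\to\rhom_R(\rhom_R(C,B),B)$, which is an isomorphism exactly because $C$ is $B$-reflexive; this single input drives everything that follows.

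For the homothety morphism I would run the chain displayed in the statement,
\begin{align*}
\rhom_R(C^{\dagger},C^{\dagger})
&\simeq\rhom_R(C^{\dagger}\lotimes_RC,B) \\
&\simeq\rhom_R(C,\rhom_R(C^{\dagger},B)) \\
&\simeq\rhom_R(C,C) \\
&\simeq R,
\end{align*}
where the first two isomorphisms are Hom-tensor adjointness (Fact~\ref{f4112}) together with commutativity of $\lotimes_R$, the third is the isomorphism $\rhom_R(C^{\dagger},B)\simeq C$ recorded above, and the fourth is the fact that $C$ is semidualizing. The point that needs checking is compatibility with the homothety morphisms: each isomorphism in the chain is $R$-linear and carries multiplication by $r$ to multiplication by $r$, so the composite identifies $\chi^R_{C^{\dagger}}$ with $\chi^R_C$; since the latter is an isomorphism, so is the former, and $C^{\dagger}$ is semidualizing. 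Making this airtight amounts to choosing a free resolution $F\res C$ and an injective resolution $B\res I$ so that the adjunction isomorphisms and $\smash{\ol\delta}^B_C$ become honest morphisms of complexes compatible with the homothety maps $\smash{\ol\chi}$, and then tracking the Koszul signs built into Definition~\ref{d0202}. This bookkeeping, rather than any conceptual difficulty, is the main obstacle.

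Finally, for $B$-reflexivity of $C^{\dagger}$: conditions~\eqref{d0202a} and~\eqref{d0202b} of Definition~\ref{d0202} hold because $C^{\dagger}$ is homologically finite, just shown, and $\rhom_R(C^{\dagger},B)\simeq C$ is homologically finite. For the biduality condition~\eqref{d0202c}, I would apply the contravariant functor $\rhom_R(-,B)$ to the isomorphism $\delta^B_C\colon C\to\rhom_R(C^{\dagger},B)$ to get an isomorphism $\rhom_R(\rhom_R(C^{\dagger},B),B)\to C^{\dagger}$, and then invoke the formal identity $\rhom_R(\delta^B_X,B)\circ\delta^B_{\rhom_R(X,B)}=\mathrm{id}_{\rhom_R(X,B)}$, valid up to sign for every $R$-complex $X$, with $X=C$. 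Since $\rhom_R(\delta^B_C,B)$ is an isomorphism, $\delta^B_{C^{\dagger}}$ is a section of an isomorphism and hence is itself an isomorphism; thus $C^{\dagger}$ is $B$-reflexive, which completes the argument (this recovers~\cite[(2.11)]{christensen:scatac}). One could instead try to reduce to modules by writing $C^{\dagger}\simeq H$ with $H$ a bounded complex of totally $B$-reflexive modules, via the last paragraph of Fact~\ref{f0208}, and arguing componentwise; but since $\rhom_R(-,B)$ need not carry such a complex to another one, this detour does not genuinely simplify matters, so I would remain inside $\catd(R)$ throughout.
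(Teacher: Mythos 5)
Your proposal is correct and takes essentially the same route as the paper: the central computation is the identical chain of adjunction isomorphisms yielding $\rhom_R(\rhom_R(C,B),\rhom_R(C,B))\simeq R$, with each step attributed to the same facts (adjointness, $B$-reflexivity of $C$, and $C$ semidualizing). The additional material you supply --- the triangle identity $\rhom_R(\delta^B_C,B)\circ\delta^B_{\rhom_R(C,B)}=\mathrm{id}$ to deduce $B$-reflexivity of $\rhom_R(C,B)$, and the observation that boundedness of $\rhom_R(C,B)$ is precisely one of the defining conditions in Definition~\ref{d0202} --- correctly fills in the details of~\cite[(2.11)]{christensen:scatac} that the paper cites without reproving.
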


The next fact  compares to~\ref{f0220'}. It is the key
tool for our main results.

\begin{fact} \label{f0220}
Consider a chain 
$\cl{C^0}\tri\cl{C^1}\tri\cdots\tri\cl{C^d}$
in $\s(R)$. Gerko~\cite[(3.3)]{gerko:sdc} shows that there is an isomorphism
$$C^0\simeq\rhom_R(C^1,C^0)\lotimes_R\cdots\lotimes_R\rhom_R(C^d,C^{d-1})\lotimes_RC^d.$$
(Note that each factor in the tensor product is a semidualizing
$R$-complex by Fact~\ref{f0218}.)
The proof is by induction on $d$, with the case $d=1$ being the most important.
Consider  the natural evaluation morphism
$$\xi\colon\rhom_R(C^1,C^0)\lotimes_RC^1\to C^0$$
which fits into the following commutative diagram:
$$\xymatrix{
R\ar[rr]^-{\chi^R_{\rhom_R(C^1,C^0)}}_-{\simeq} \ar[d]_{\chi^R_{C^0}}^{\simeq}
&&\rhom_R(\rhom_R(C^1,C^0),\rhom_R(C^1,C^0)) \\
\rhom_R(C^0,C^0)\ar[rr]^-{\rhom_R(\xi,C^0)}
&&\rhom_R(\rhom_R(C^1,C^0)\lotimes_RC^1,C^0). \ar[u]_{\simeq}
}$$
The unspecified isomorphism is adjointness~\ref{f4112}.
The morphisms $\chi^R_{\rhom_R(C^1,C^0)}$ and
$\chi^R_{C^0}$  are isomorphisms
in $\catd(R)$ since $C^0$ and $\rhom_R(C^1,C^0)$ are semidualizing;
see Fact~\ref{f0218}. 
Hence, the morphism $\rhom_R(\xi,C^0)$ is an isomorphism in $\catd(R)$.
Since $C^0$ is semidualizing, it follows that
$\xi$ is also an isomorphism; see~\cite[(A.8.11)]{christensen:gd}.
\end{fact}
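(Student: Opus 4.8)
The plan is to argue by induction on $d$, with the base case $d=1$ carrying all the real content and the inductive step being essentially formal. So suppose first that $d=1$, so that $C^1$ is $C^0$-reflexive; the goal is to show that the natural evaluation morphism $\xi\colon\rhom_R(C^1,C^0)\lotimes_RC^1\to C^0$ is an isomorphism in $\catd(R)$. First I would record, via Fact~\ref{f0218} with $B=C^0$ and $C=C^1$, that $\rhom_R(C^1,C^0)$ is semidualizing, so that $\chi^R_{\rhom_R(C^1,C^0)}$ is an isomorphism; combined with the fact that $C^0$ is semidualizing (so $\chi^R_{C^0}$ is an isomorphism) and Hom-tensor adjointness~\ref{f4112}, the commutative square
$$\xymatrix{
R\ar[rr]^-{\chi^R_{\rhom_R(C^1,C^0)}}_-{\simeq} \ar[d]_{\chi^R_{C^0}}^{\simeq}
&&\rhom_R(\rhom_R(C^1,C^0),\rhom_R(C^1,C^0)) \\
\rhom_R(C^0,C^0)\ar[rr]^-{\rhom_R(\xi,C^0)}
&&\rhom_R(\rhom_R(C^1,C^0)\lotimes_RC^1,C^0) \ar[u]_{\simeq}
}$$
forces $\rhom_R(\xi,C^0)$ to be an isomorphism in $\catd(R)$. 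Checking that this square commutes is a routine diagram chase carried out on a resolution of $C^0$, modulo the customary care with signs.

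Next I would pass from ``$\rhom_R(\xi,C^0)$ is an isomorphism'' to ``$\xi$ is an isomorphism'' by a cancellation argument: the mapping cone $Z$ of $\xi$ is homologically finite, since both $\rhom_R(C^1,C^0)\lotimes_RC^1$ and $C^0$ are; and $\rhom_R(Z,C^0)\simeq0$. Because a semidualizing complex has full support, the functor $\rhom_R(-,C^0)$ reflects isomorphisms between homologically finite complexes, whence $Z\simeq0$ and $\xi$ is an isomorphism. This is the role of~\cite[(A.8.11)]{christensen:gd}. If that cited cancellation result is instead phrased in terms of $C^0$-reflexive complexes, I would additionally note that the source of $\xi$ is $C^0$-reflexive, since $\rhom_R(C^1,C^0)$ is semidualizing and $C^1$ is $C^0$-reflexive.

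For the inductive step, suppose $d\geq2$ and the result is known for shorter chains. Applying the case $d=1$ to the link $\cl{C^0}\tri\cl{C^1}$ gives $C^0\simeq\rhom_R(C^1,C^0)\lotimes_RC^1$, while applying the inductive hypothesis to the subchain $\cl{C^1}\tri\cl{C^2}\tri\cdots\tri\cl{C^d}$ gives
$$C^1\simeq\rhom_R(C^2,C^1)\lotimes_R\cdots\lotimes_R\rhom_R(C^d,C^{d-1})\lotimes_RC^d.$$
Substituting the second isomorphism into the first and using associativity of $\lotimes_R$ in $\catd(R)$ yields the asserted decomposition. For the parenthetical claim I would observe that each factor $\rhom_R(C^i,C^{i-1})$ is semidualizing, which is again Fact~\ref{f0218} applied to the link $\cl{C^{i-1}}\tri\cl{C^i}$.

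The main obstacle lies entirely in the base case and is twofold: the easier-but-fiddly part is verifying commutativity of the square above at the level of representing morphisms of complexes, and the more substantive part is the passage from $\rhom_R(\xi,C^0)$ being an isomorphism to $\xi$ being one, i.e.\ the faithfulness of $\rhom_R(-,C^0)$ on homologically finite complexes. Once the base case is established, the induction is a purely formal substitution and reassociation, and the semidualizing property of the factors is immediate from Fact~\ref{f0218}.
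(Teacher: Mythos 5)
Your proposal is correct and follows essentially the same route as the paper: the same commutative square built from the two homothety morphisms and adjointness, the same reduction to showing that $\rhom_R(\xi,C^0)$ is an isomorphism, the same appeal to a cancellation-type result (the paper's \cite[(A.8.11)]{christensen:gd}) to conclude that $\xi$ itself is an isomorphism, and the same formal substitution-and-reassociation for the inductive step. The only difference is one of exposition — you unpack what the cited cancellation lemma actually does (cone of $\xi$ is homologically finite, $\rhom_R(\cone(\xi),C^0)\simeq 0$, and $\rhom_R(-,C^0)$ reflects triviality on homologically finite complexes) and you spell out the inductive step, both of which the paper leaves implicit.
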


The final fact in this section compares to~\ref{f1001'}.

\begin{fact} \label{f1001}
The ordering on $\s(R)$ is reflexive 
by Fact~\ref{f0208}.
Also, it is antisymmetric by~\cite[(5.3)]{takahashi:hiatsb}. 
The essential point in the proof of antisymmetry comes from Fact~\ref{f0220}.
Indeed, if $\cl B\tri\cl C\tri\cl B$, then
$$B\simeq \rhom_R(C,B)\lotimes_R\rhom_R(B,C)\lotimes_RB.$$
It follows that there is an equality of Poincar\'e series
$$P^R_B(t)=P^R_{\rhom_R(C,B)}(t)P^R_{\rhom_R(B,C)}(t)P^R_{B}(t).$$
Since each Poincar\'e series has nonnegative integer coefficients,
this display implies that 
$P^R_{\rhom_R(C,B)}(t)=t^r$ and $P^R_{\rhom_R(B,C)}(t)=t^{-r}$
for some integer $r$. So, we have $\rhom_R(C,B)\simeq \shift^rR$.
This yields the second isomorphism in the next sequence
$$C \simeq\rhom_R(\rhom_R(C,B),B) \simeq \rhom_R(\shift^rR,B) \simeq \shift^rB.$$
The first isomorphism follows from the fact that $C$ is $B$-reflexive,
and the third isomorphism is cancellation~\ref{f4112}. We conclude that $\cl C=\cl B$.
\end{fact}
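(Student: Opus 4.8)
The plan is to get reflexivity essentially for free and then reduce antisymmetry to a Poincar\'e-series computation. Reflexivity is immediate: by Fact~\ref{f0299} every semidualizing $R$-complex $C$ is itself $C$-reflexive, so $\cl C\tri\cl C$ in $\s(R)$. For antisymmetry, suppose $\cl B\tri\cl C$ and $\cl C\tri\cl B$; I want to conclude $\cl B=\cl C$. The key idea is to feed the length-two chain $\cl B\tri\cl C\tri\cl B$ into Gerko's tensor decomposition and then extract numerical constraints from it.

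First I would apply Fact~\ref{f0220} to this chain, obtaining an isomorphism
$$B\simeq\rhom_R(C,B)\lotimes_R\rhom_R(B,C)\lotimes_RB$$
in $\catd(R)$. By Fact~\ref{f0218} each of $\rhom_R(C,B)$ and $\rhom_R(B,C)$ is again a semidualizing $R$-complex; in particular each is nonzero and homologically finite, hence has a minimal free resolution bounded below and a well-defined Poincar\'e series with nonnegative integer coefficients. Since the tensor product of minimal free resolutions over a local ring is again minimal, Poincar\'e series are multiplicative along $\lotimes_R$, and the displayed isomorphism yields
$$P^R_B(t)=P^R_{\rhom_R(C,B)}(t)\,P^R_{\rhom_R(B,C)}(t)\,P^R_B(t).$$

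As $B$ is nonzero, $P^R_B(t)\neq 0$; its lowest-degree coefficient is a positive integer, so I may cancel it and get $P^R_{\rhom_R(C,B)}(t)\,P^R_{\rhom_R(B,C)}(t)=1$. The step I expect to require the most care is the purely numerical one: a formal Laurent series with nonnegative integer coefficients (and bounded below) that divides $1$ must be a monomial $t^r$ --- factor out the lowest-degree term of each factor to reduce to $\bbz[[t]]$, note that the constant terms must both equal $1$, and observe that the multiplicative inverse in $\bbz[[t]]$ of $1$ plus a nonzero series with nonnegative coefficients necessarily has a negative coefficient, a contradiction unless that series is $0$. Hence $P^R_{\rhom_R(C,B)}(t)=t^r$ for some $r\in\bbz$, which forces the minimal free resolution of $\rhom_R(C,B)$ to be a single copy of $R$ concentrated in homological degree $r$; that is, $\rhom_R(C,B)\simeq\shift^rR$.

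To finish, I would use that $C$ is $B$-reflexive (this is exactly $\cl B\tri\cl C$), so
$$C\simeq\rhom_R(\rhom_R(C,B),B)\simeq\rhom_R(\shift^rR,B),$$
and by the shift/cancellation identities for $\rhom_R$ (Fact~\ref{f4112}) the right-hand side is a shift of $\rhom_R(R,B)\simeq B$. Therefore $\cl C=\cl B$ in $\s(R)$, which is antisymmetry. All of the structural content sits in Facts~\ref{f0220} and~\ref{f0218}; the only real work is the Poincar\'e-series bookkeeping --- well-definedness and boundedness below for homologically finite complexes, multiplicativity under $\lotimes_R$, and the monomial argument above --- which is routine but must be carried out carefully.
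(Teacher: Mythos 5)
Your argument is correct and is essentially the paper's: both feed the length-two chain into Gerko's decomposition (Fact~\ref{f0220}), pass to Poincar\'e series, conclude $\rhom_R(C,B)\simeq\shift^rR$ from positivity, and then unwind via $B$-reflexivity of $C$ and cancellation. The only differences are cosmetic: you cite Fact~\ref{f0299} (which directly gives $C$-reflexivity of $C$) where the paper cites Fact~\ref{f0208}, and you spell out the integrality/positivity argument for why a unit Laurent series with nonnegative integer coefficients must be a monomial, which the paper leaves implicit.
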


As in the module-setting, the question of
the transitivity of this order remains  open.
An affirmative answer to Question~\ref{q0202}
would yield an affirmative answer to Question~\ref{q0202'} as
the map $\s_0(R)\hookrightarrow\s(R)$ is  order-preserving
by Fact~\ref{f0298}. 
Questions~\ref{q0202'} and~\ref{q0202} are equivalent when $R$ is Cohen-Macaulay
since, in this case, the map $\s_0(R)\hookrightarrow\s(R)$ is surjective
by Fact~\ref{f0214}. (Again, this hinges on the local assumption for $R$ by
Remark~\ref{d9998}.)

\begin{question} \label{q0202}
Let $A$, $B$ and $C$ be semidualizing $R$-complexes.
If $B$ is $A$-reflexive and $C$ is $B$-reflexive, must $C$ be $A$-reflexive?
\end{question}

\section{Bounding Bass Numbers} \label{sec05}

We begin with three lemmas, the first of which essentially
says that semidualizing complexes over local rings are indecomposable.
Note that Remark~\ref{d9998} shows that the local hypothesis is essential.

\begin{lem} \label{l0200}
Let $R$ be a local ring
and let $C$ be a semidualizing $R$-complex.
If $X$ and $Y$ are $R$-complexes such that
$C\simeq X\oplus Y$, then either $X\simeq 0$ or $Y\simeq 0$.
\end{lem}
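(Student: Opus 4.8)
The plan is to exploit the semidualizing condition $\rhom_R(C,C)\simeq R$ together with the decomposition $C\simeq X\oplus Y$. First I would observe that $X$ and $Y$ are automatically homologically finite, since they are direct summands of the homologically finite complex $C$; in particular each of them has well-defined (and finite) infimum and supremum, and $\HH(X)$, $\HH(Y)$ are finitely generated over $R$. The key computation is to expand $\rhom_R(C,C)$ using the decomposition. Hom-tensor considerations (Fact~\ref{f4112}, i.e.\ the additivity of $\rhom_R(-,-)$ in each slot on direct sums of homologically finite complexes) give an isomorphism in $\catd(R)$
$$R\simeq\rhom_R(C,C)\simeq\rhom_R(X,X)\oplus\rhom_R(X,Y)\oplus\rhom_R(Y,X)\oplus\rhom_R(Y,Y).$$

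Next I would pass to homology. Since $R$ is concentrated in degree $0$ with $\HH_0(R)=R$, the right-hand side must be concentrated in degree $0$ as well, and the degree-$0$ homology must be $R$. In particular $\HH_0(\rhom_R(X,X))$ and $\HH_0(\rhom_R(Y,Y))$ are finitely generated $R$-modules whose direct sum (together with the cross-terms' degree-$0$ homology) is $R$. Because $R$ is local, $R$ is indecomposable as an $R$-module, so one of these four summands carries all of $\HH_0$ and the rest vanish in degree $0$; but more is needed, since a priori a summand could be nonzero in some other degree while being $0$ in degree $0$. To rule that out I would use that each $\rhom_R(X,X)$ etc.\ is itself homologically bounded and that their direct sum has homology only in degree $0$, forcing $\rhom_R(X,Y)\simeq 0$, $\rhom_R(Y,X)\simeq 0$, and one of $\rhom_R(X,X)$, $\rhom_R(Y,Y)$ to be $\simeq 0$ with the other $\simeq R$. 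Say $\rhom_R(Y,Y)\simeq 0$. Now I claim $Y\simeq 0$: if $Y\not\simeq 0$ then, $Y$ being homologically finite over the local ring $R$, we have $\rhom_R(Y,Y)\not\simeq 0$ — for instance because $\HH_{i}(\rhom_R(Y,Y))$ for $i=\inf Y-\sup Y$ (the bottom nonvanishing degree) contains $\Hom_R(\HH_{\sup Y}(Y),\HH_{\inf Y}(Y))$ as a nonzero summand (a standard spectral-sequence / truncation argument, both these Hom-modules being nonzero finitely generated modules over the local ring $R$, so their Hom is nonzero). This contradiction gives $Y\simeq 0$.

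The main obstacle I anticipate is the last step: cleanly proving that $\rhom_R(Y,Y)\simeq 0$ forces $Y\simeq 0$ for a homologically finite complex over a local ring. The ``top and bottom homology'' trick is the cleanest route — one restricts attention to $\HH_{\sup Y}(Y)$ and $\HH_{\inf Y}(Y)$, both nonzero finitely generated $R$-modules, and shows $\HH_{\inf Y - \sup Y}(\rhom_R(Y,Y))\cong\Hom_R(\HH_{\sup Y}(Y),\HH_{\inf Y}(Y))$, which is nonzero by Nakayama (a nonzero finitely generated module over a local ring admits a surjection to $k$, and $\Hom_R(M,N)\supseteq$ image of such maps is nonzero when $N\neq 0$). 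One must be slightly careful that no higher differentials in the relevant spectral sequence can hit this bottom term — but it sits in the lowest total degree, so nothing can cancel it. Alternatively, one could invoke a form of the result that $\rhom_R(Y,Y)$ detecting $Y\simeq 0$ is already recorded in the literature on semidualizing complexes; but the self-contained argument above is short enough to include directly. The rest of the proof is the routine additivity of $\rhom$ over finite direct sums plus the indecomposability of $R$ as a module over the local ring $R$.
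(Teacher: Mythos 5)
Your overall strategy matches the paper's: expand $R\simeq\rhom_R(C,C)$ by additivity of $\rhom$ over the decomposition $C\simeq X\oplus Y$, and use the indecomposability of the local ring $R$ as a module over itself to conclude that three of the four summands are homologically trivial. The gap is in your final step, where you need $\rhom_R(Y,Y)\simeq 0$ to force $Y\simeq 0$. Your ``bottom homology'' argument fails on two counts. First, $\rhom_R(Y,Y)$ has no bottom nonvanishing degree in general: if $\pd_R(Y)=\infty$ (the typical case for a homologically finite complex), then $\HH_{-i}(\rhom_R(Y,Y))$ is nonzero for arbitrarily large $i$, so the complex is only bounded above, not below (Fact~\ref{f0111} controls $\sup$, not $\inf$). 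Second, and more seriously, the assertion that $\Hom_R(M,N)\neq 0$ for any two nonzero finitely generated modules over a local ring is false: take $R=(k[x,y]/(xy))_{(x,y)}$, $M=R/(x)$, $N=R/(y)$; then $\Hom_R(M,N)=\{n\in N : xn=0\}=0$ because $x$ acts injectively on $N$. Your parenthetical Nakayama argument tacitly needs a nonzero map $k\to N$, i.e.\ a nonzero socle in $N$, which need not exist. There is also a smaller unaddressed case in the middle of your argument: after indecomposability you conclude that one of $\rhom_R(X,X)$, $\rhom_R(Y,Y)$ is $\simeq R$ and the other $\simeq 0$, but a priori the nonzero summand could be a cross-term $\rhom_R(X,Y)$ or $\rhom_R(Y,X)$, in which case both diagonal pieces would vanish.

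The paper handles both issues at once via Fact~\ref{f0102}. It first assumes $X\not\simeq 0$ and uses the coefficientwise equality $I_R^{\rhom_R(X,X)}(t)=P^R_X(t)\,I^X_R(t)$; both factors are nonzero formal Laurent series with nonnegative integer coefficients, so the product is nonzero, hence $\rhom_R(X,X)\not\simeq 0$. This pins down which summand must be $\simeq R$ after the indecomposability argument and forces $\rhom_R(Y,Y)\simeq 0$. A second application of Fact~\ref{f0102}, $0=I_R^{\rhom_R(Y,Y)}(t)=P^R_Y(t)\,I^Y_R(t)$, then gives $P^R_Y(t)=0$ or $I^Y_R(t)=0$, either of which yields $Y\simeq 0$. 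If you prefer to avoid Bass and Poincar\'e series, a clean replacement for your final step is: $\HH_0(\rhom_R(Y,Y))\cong\Hom_{\catd(R)}(Y,Y)$ contains the identity morphism $\id_Y$, and $\id_Y=0$ in $\catd(R)$ if and only if $Y\simeq 0$; this avoids any appeal to top or bottom homology degrees.
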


\begin{proof}
The condition $C\simeq X\oplus Y$ implies that
$\HH_i(C)\cong\HH_i(X)\oplus\HH_i(Y)$ for each index $i$.
Hence, the fact that $C$ is homologically finite implies that
$X$ and $Y$ are both homologically finite as well.

We assume that $X\not\simeq 0$ and show that $Y\simeq 0$. 
Fact~\ref{f0102} yields the following equality of formal Laurent series
$$I_R^{\rhom_R(X,X)}(t)=P^R_X(t)I^X_R(t).$$
The condition $X\not\simeq 0$ implies
$P^R_X(t)\neq 0$ and $I^X_R(t)\neq 0$ by Fact~\ref{f0102}.
The display implies that $I_R^{\rhom_R(X,X)}(t)\neq 0$,
and thus $\rhom_R(X,X)\not\simeq 0$. 
The fact that $C$ is a semidualizing $R$-complex yields the first
isomorphism in the next sequence
\begin{align*}
R
&\simeq\rhom_R(C,C)
\simeq\rhom_R(X\oplus Y,X\oplus Y)\\
&\simeq\rhom_R(X,X) \oplus\rhom_R(X,Y) \oplus\rhom_R(Y,X) \oplus\rhom_R(Y,Y).
\end{align*}
The third isomorphism is additivity~\ref{f4112}.
Because $R$ is local, it is indecomposible as an $R$-module.
By taking homology, we conclude that three of the summands
in the second line of the previous sequence
are homologically trivial, that is $\simeq 0$.
Since $\rhom_R(X,X)\not\simeq 0$, it follows that
$\rhom_R(Y,Y)\simeq 0$.
Another application of Fact~\ref{f0102} implies that
$$0=I_R^{\rhom_R(Y,Y)}(t)=P^R_Y(t)I^Y_R(t).$$
Hence, either $P^R_Y(t)=0$ or $I^Y_R(t)=0$. 
In either case, we conclude that $Y\simeq 0$.
\end{proof}

The next lemma
generalizes Fact~\ref{f0205}. See also Fact~\ref{f0205'}
and Question~\ref{q}. It is essentially a corollary of Lemma~\ref{l0200}.

\begin{lem} \label{l0201}
Let $R$ be a local ring
and let $C$ be a semidualizing $R$-complex.
Set $i=\inf(C)$. If there  is an integer $j\geq i$ such that 
$\beta_j^R(C)=0$, then 
$C\simeq\shift^iR$.
\end{lem}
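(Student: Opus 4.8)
The plan is to take a minimal free resolution of $C$, use the vanishing Betti number to split it into two pieces, apply Lemma~\ref{l0200} to discard one piece, and then finish with Fact~\ref{f0205}.

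First I would fix a minimal free resolution $F\res C$, so that each $F_n$ is free of rank $\beta_n^R(C)$, the differential $\partial^F$ has all entries in $\m$, and $F_n=0$ for $n<i=\inf(C)$. Since $C$ is semidualizing we have $R\simeq\rhom_R(C,C)\not\simeq 0$, hence $C\not\simeq 0$ and $\HH_i(C)\neq 0$; minimality of $F$ then forces $F_i\neq 0$, that is, $\beta_i^R(C)\neq 0$. Consequently the given index $j$ with $\beta_j^R(C)=0$ satisfies $j\geq i+1$, and $F_j=0$.

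Next, because $F_j=0$, the maps $\partial^F_{j+1}\colon F_{j+1}\to F_j$ and $\partial^F_j\colon F_j\to F_{j-1}$ both vanish, so $F$ splits as a direct sum of subcomplexes $F=F'\oplus F''$, where $F'$ agrees with $F$ in degrees between $i$ and $j-1$ and is $0$ elsewhere, while $F''$ agrees with $F$ in degrees $\geq j+1$ and is $0$ elsewhere. In $\catd(R)$ this gives $C\simeq F'\oplus F''$. Since $\HH_i(F')=\HH_i(F)=\HH_i(C)\neq 0$, the complex $F'$ is not homologically trivial, so Lemma~\ref{l0200} forces $F''\simeq 0$, and hence $C\simeq F'$.

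Finally, $F'$ is a bounded complex of finitely generated free $R$-modules, so $\pd_R(C)=\pd_R(F')<\infty$; since $C$ is semidualizing, Fact~\ref{f0205} yields $C\simeq\shift^{\inf(C)}R=\shift^iR$, as desired. There is no serious obstacle here, which is why the lemma is ``essentially a corollary of Lemma~\ref{l0200}''; the only points needing care are the standard facts that a homologically finite complex over a local noetherian ring admits a minimal free resolution whose $n$-th term has rank $\beta_n^R(C)$, and the inequality $j>i$ (which uses $C\not\simeq 0$) that guarantees Lemma~\ref{l0200} kills $F''$ rather than $F'$.
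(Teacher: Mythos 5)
Your proposal is correct and follows essentially the same route as the paper: fix a minimal free resolution, use $\beta_j^R(C)=0$ to split it at degree $j$, invoke Lemma~\ref{l0200} to kill the upper piece, and conclude with Fact~\ref{f0205}. The only cosmetic difference is that you justify $F'\not\simeq 0$ by computing $\HH_i(F')=\HH_i(C)\neq 0$ directly, whereas the paper appeals to minimality of the truncated complex; both amount to the same observation that $F_i\neq 0$.
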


\begin{proof}
By Fact~\ref{f0205}, it suffices to show that $\pd_R(C)<\infty$. 
Let $F$ be a minimal free resolution of $C$. 
The assumption $\beta_j^R(C)=0$ implies that $F_j=0$
by Fact~\ref{f0102}.
Note that $F_i\neq 0$ since $H_i(C)\neq 0$, so we have $j>i$.
Thus $F$ has the following form
$$
F=\cdots\xra{\partial^F_{j+2}}F_{j+1}\to 0\to F_{j-1}
\xra{\partial^F_{j-1}}\cdots\xra{\partial^F_{i+1}}F_{i}\to 0.
$$
Hence, we have
$C\simeq F\cong F^1\oplus F^2$ where
\begin{align*}
F^1&=\cdots\xra{\hspace{8mm}} 0\xra{\hspace{4.5mm}} 0\to F_{j-1}
\xra{\partial^F_{j-1}}\cdots\xra{\partial^F_{i+1}}F_{i}\to 0 \\
F^2&=\cdots\xra{\partial^F_{j+2}}F_{j+1}\to 0\xra{\hspace{4mm}} 0
\xra{\hspace{8.5mm}}\cdots\xra{\hspace{6mm}} 0 \xra{\hspace{3mm}} 0.
\end{align*}
The condition $F_i\neq 0$ implies $F^1\not\simeq 0$ as $F^1$ is minimal;
see Fact~\ref{f0101}.
Lemma~\ref{l0200} yields $F^2\simeq 0$, 
so $C\simeq F^1\oplus F^2\simeq F^1$, which has finite projective dimension.
\end{proof}

When $R$ is Cohen-Macaulay, the gist of the proof of the next lemma is found in
Fact~\ref{f0220'}: the minimal free resolution of $D$ factors as a tensor product
of $d+1$ minimal free resolutions of modules of infinite projective dimension.
Note that the Cohen-Macaulay hypothesis in the final sentence of the statement
is essential because of Example~\ref{ex0101}.

\begin{lem} \label{l0501}
Let $R$ be a local ring of depth $g$
such that $\s(R)$ contains a chain of length $d+1$.
Then there exist power series $P_0(t),\ldots,P_d(t)$ with positive integer
coefficients such that
$$I^R_R(t)=t^{g}P_0(t)\cdots P_d(t).$$
If, in addition, $R$ is Cohen-Macaulay, and 
$p$ is the smallest prime factor of $\mu^g_R(R)$, then the constant term of each $P_i(t)$ is at least $p$.
\end{lem}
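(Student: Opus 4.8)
The key input is Fact~\ref{f0220}, which says that a chain $\cl{C^0}\tri\cl{C^1}\tri\cdots\tri\cl{C^d}$ in $\s(R)$ produces an isomorphism
$$C^0\simeq\rhom_R(C^1,C^0)\lotimes_R\cdots\lotimes_R\rhom_R(C^d,C^{d-1})\lotimes_RC^d$$
in which every tensor factor is a semidualizing $R$-complex. The plan is to pass to completion (so that $R$ has a dualizing complex), identify $I^R_R(t)$ with the Poincar\'e series of a dualizing complex via Fact~\ref{f0206}, observe that the dualizing complex sits at the bottom of every chain, and then read off the claimed factorization from the tensor-product isomorphism together with multiplicativity of Poincar\'e series.

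\textbf{Step 1: Reduce to the complete case.} Using Fact~\ref{f0204} and Fact~\ref{f0216}, a chain of length $d+1$ in $\s(R)$ base-changes to a chain of length $d+1$ in $\s(\rhat)$, and by Fact~\ref{f0206} we have $I^R_R(t)=I^{\rhat}_{\rhat}(t)$ and $\depth(R)=\depth(\rhat)=g$. So we may and do assume $R$ is complete; by Fact~\ref{f0215} it has a dualizing complex $D$, which we normalize so that $\sup(D)=\dim(R)-g$... actually more cleanly: normalize $D$ so that the shift $s$ in Fact~\ref{f0206} satisfies $I^R_R(t)=t^sP^R_D(t)$. We will absorb shifts at the end, so the precise normalization is unimportant as long as we track it.

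\textbf{Step 2: Put the dualizing complex at the bottom of the chain.} Let $\cl{C^0}\trin\cl{C^1}\trin\cdots\trin\cl{C^d}$ be the given chain (length $d+1$ meaning $d+1$ terms). By Fact~\ref{f0216}, $\cl D\tri\cl{C^0}$, i.e.\ $C^0$ is $D$-reflexive, so we may prepend $\cl D$ and assume $C^0=D$ (at worst this only lengthens the chain, and a longer chain gives a finer factorization, which is fine; or if we insist on exactly $d+1$ factors we note $\cl D\tri\cl{C^0}$ so either $\cl D=\cl{C^0}$ already, or we get an extra factor which can be merged). Cleanest: apply Fact~\ref{f0220} directly to the chain $\cl D\tri\cl{C^0}\tri\cdots\tri\cl{C^d}$ of length $d+2$, obtaining
$$D\simeq\rhom_R(C^0,D)\lotimes_R\rhom_R(C^1,C^0)\lotimes_R\cdots\lotimes_R\rhom_R(C^d,C^{d-1})\lotimes_RC^d,$$
a tensor product of $d+2$ semidualizing complexes. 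Hmm — that gives $d+1$ or $d+2$ factors; I will instead just take $P_0(t),\dots,P_{d-1}(t)$ from the $d$ maps $\rhom_R(C^i,C^{i-1})$ for $i=1,\dots,d$ together with $P_d(t)$ from a single factor absorbing $\rhom_R(C^0,D)\lotimes_R C^d$ — wait, that is not a single semidualizing complex. The correct bookkeeping: the chain $\cl D = \cl{C^0}\tri\cdots\tri\cl{C^d}$ (after prepending, now with $d+1$ terms $\cl{C^0},\dots,\cl{C^d}$ and $\cl{C^0}=\cl D$) yields via Fact~\ref{f0220} an isomorphism of $D\simeq C^0$ as a tensor product of the $d$ complexes $E_i:=\rhom_R(C^i,C^{i-1})$ ($i=1,\dots,d$) with $C^d$, i.e.\ $d+1$ tensor factors total; but we want $d+1$ factors $P_0,\dots,P_d$ for a chain with $d+1$ terms, so this works exactly once we prepend $D$, turning the original $d+1$-term chain into a $d+2$-term chain $\cl D\tri\cl{C^0}\tri\cdots\tri\cl{C^d}$ and hence $d+1$ tensor factors. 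I will present it that way.

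\textbf{Step 3: Extract the factorization of Poincar\'e series.} Each tensor factor $E$ above is a semidualizing $R$-complex, hence homologically finite and nonzero, so $P^R_E(t)$ is a nonzero formal Laurent series with nonnegative integer coefficients; moreover $\inf(E)$ is finite and the first nonzero coefficient of $P^R_E(t)$ is $\beta^R_{\inf E}(E)>0$. Since a minimal free resolution of a tensor product of complexes is the tensor product of minimal free resolutions (this is exactly the content of the last paragraph of Fact~\ref{f0220}, invoking Definition~\ref{d0105}), Poincar\'e series are multiplicative: $P^R_D(t)=\prod P^R_{E_k}(t)$. Write each $P^R_{E_k}(t)=t^{a_k}Q_k(t)$ where $Q_k(t)$ has positive constant term and positive integer coefficients (a power series in $t$). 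Then $P^R_D(t)=t^{\sum a_k}\prod Q_k(t)$, and combining with $I^R_R(t)=t^sP^R_D(t)$ from Step 1 gives $I^R_R(t)=t^{s+\sum a_k}\prod_{k=0}^{d}Q_k(t)$. Since $I^R_R(t)$ has constant term $\mu^0_R(R)$ and lowest-degree term in degree $g$ (because $\depth R=g$, so $\mu^i_R(R)=0$ for $i<g$ and $\mu^g_R(R)\neq 0$), and the product $\prod Q_k(t)$ has nonzero constant term, we must have $s+\sum a_k=g$. Setting $P_k(t):=Q_k(t)$ yields $I^R_R(t)=t^g P_0(t)\cdots P_d(t)$ with each $P_k(t)$ a power series with positive integer coefficients, proving the first assertion.

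\textbf{Step 4: The Cohen-Macaulay refinement.} Now assume $R$ is Cohen-Macaulay. Evaluating the identity $I^R_R(t)=t^gP_0(t)\cdots P_d(t)$ and comparing the coefficient of $t^g$ on both sides gives $\mu^g_R(R)=P_0(0)\cdots P_d(0)$, a product of $d+1$ positive integers. Hence each $P_k(0)$ divides $\mu^g_R(R)$ and is $>1$ — wait, I need $P_k(0)\geq p$, the smallest prime factor, not merely $\geq 2$. The point is that each $P_k(0)=\beta^R_{\inf E_k}(E_k)$ is the first nonzero Betti number of a \emph{non-free} semidualizing complex (it is non-free since otherwise by Fact~\ref{f0205} it is shift-isomorphic to $R$, collapsing the chain — here is where I use that the original chain is strict, $\trin$). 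For a non-free semidualizing complex $E$ over a local ring, is the first nonzero Betti number $\geq 2$? In the module case Fact~\ref{f0201'} gives this. In the complex case Example~\ref{ex0101} shows it can be $1$! So $P_k(0)\geq 2$ is \emph{false} in general — but here $R$ is Cohen-Macaulay, and by Fact~\ref{f0201} every semidualizing $R$-complex is shift-isomorphic to a semidualizing \emph{module}; so each $E_k\simeq\shift^{?}B_k$ with $B_k$ a non-free semidualizing module, whence $P_k(0)=\beta^R_0(B_k)\geq 2$ by Fact~\ref{f0201'}. That gives $P_k(0)\geq 2$, not $\geq p$. To upgrade: we have a factorization $\mu^g_R(R)=n_0\cdots n_d$ into integers $n_k=P_k(0)\geq 2$; each $n_k$ has a prime factor, which is a prime factor of $\mu^g_R(R)$, hence $\geq p$... no, $n_k$ could equal a prime larger than $p$, or $n_k$ could be composite with smallest prime factor $p$ — in all cases $n_k\geq p$ since $n_k\geq 2$ and $n_k\mid \mu^g_R(R)$ forces every prime factor of $n_k$ to be a prime factor of $\mu^g_R(R)$, so the smallest is $\geq p$, hence $n_k\geq p$. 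That's the argument. The main obstacle is precisely this last point: getting $\geq p$ rather than just $\geq 2$, which requires noting that any integer $\geq 2$ dividing $\mu^g_R(R)$ is automatically $\geq p$.

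\textbf{Anticipated main difficulty.} The genuinely delicate steps are (a) the completion reduction must carry the chain \emph{and} preserve its being strict (so that no factor degenerates to $R$ and $P_k(0)\geq 2$) — this uses Facts~\ref{f0204} and~\ref{f0216}; and (b) in the Cohen-Macaulay case, converting semidualizing complexes in the factorization back to modules via Fact~\ref{f0201} in order to apply the module-level bound $\beta^R_0\geq 2$ of Fact~\ref{f0201'}, since the complex-level analogue genuinely fails (Example~\ref{ex0101}). The rest is bookkeeping with formal Laurent series and shifts.
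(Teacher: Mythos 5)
Your overall strategy matches the paper's exactly: pass to the completion, normalize the dualizing complex so its Poincar\'e series is the Bass series, use Fact~\ref{f0220} to factor $D$ as a derived tensor product of semidualizing complexes, read off the factorization of $I^R_R(t)$ from multiplicativity of Poincar\'e series, and finally reduce to modules via Fact~\ref{f0201} for the Cohen--Macaulay refinement. However, there is one genuine gap and one bookkeeping error worth flagging.

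The genuine gap is in Step 3. After writing $P^R_{E_k}(t)=t^{a_k}Q_k(t)$ with $a_k=\inf(E_k)$, you assert without argument that ``$Q_k(t)$ has positive constant term \emph{and positive integer coefficients}.'' A priori you only know the coefficients are \emph{nonnegative} integers with positive constant term: nothing you have said rules out $\beta^R_j(E_k)=0$ for some $j>\inf(E_k)$, which would make a higher coefficient of $Q_k$ vanish and destroy the desired conclusion that $P_k(t)$ has \emph{all} positive coefficients. This is precisely where the paper invokes Lemma~\ref{l0201}: since each $E_k$ is non-free (which you establish from strictness of the chain, but only later, in Step 4), a vanishing Betti number in degree $\geq\inf(E_k)$ would force $E_k\simeq\shift^{\inf(E_k)}R$, a contradiction; hence $\beta^R_j(E_k)\geq 1$ for all $j\geq\inf(E_k)$. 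You need to move the ``non-free by strictness'' observation up to Step 3 and combine it with Lemma~\ref{l0201} (or re-prove its content) before you can claim the $Q_k$ are honest power series with strictly positive coefficients. Without this, the first assertion of the lemma is unproved.

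There is also a counting issue that, while not fatal, reflects a misreading of the hypothesis. The paper's convention is that a ``chain of length $d+1$'' has $d+1$ \emph{links}, i.e.\ $d+2$ terms $\cl{C^0}\trin\cdots\trin\cl{C^{d+1}}$ (this is consistent with the proof of Corollary~\ref{cor0101}, where the three-term chain $\cl{D'}\trin\cl{\chat}\trin\cl{\rhat}$ is treated as a chain of length $2$). After normalizing $C^0=D$ and $C^{d+1}=R$, Fact~\ref{f0220} delivers exactly the $d+1$ tensor factors $\rhom_R(C^{i+1},C^i)$, $i=0,\dots,d$, since the trailing $C^{d+1}=R$ drops out. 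Your interpretation ``length $d+1$ meaning $d+1$ terms'' gives $d$ links; prepending $D$ and not appending $R$ leaves a stray factor $C^d$ in the tensor product, so you end up with the wrong number of factors (and your back-and-forth counting in Step~2 never resolves this cleanly). One can repair this by merging factors, since a product of power series with positive integer coefficients again has positive integer coefficients, and the divisibility argument in Step~4 is robust under merging; but it is cleaner to simply adopt the paper's convention and append $\cl R$ to the chain so that Fact~\ref{f0220} produces exactly $d+1$ factors.

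The Cohen--Macaulay refinement in Step~4 is correct and is handled in essentially the same way as the paper, including the crucial use of Fact~\ref{f0201} to drop from complexes to modules (your observation that the complex-level bound $\beta_0\geq 2$ can fail, via Example~\ref{ex0101}, is exactly right) and the elementary number-theoretic point that any integer $\geq 2$ dividing $\mu^g_R(R)$ is at least $p$.
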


\begin{proof}
Assume that 
$\s(R)$ contains a chain 
$\cl{C^0}\trin\cl{C^1}\trin\cdots\trin\cl{C^d}\trin\cl{C^{d+1}}$.

We begin by proving the result in the case where $R$ has a dualizing complex $D$.
Applying a suspension if necessary, we assume that $\sup(D)=\dim(R)$;
see Fact~\ref{f0203}.
It follows that $\inf(D)=g$ by Fact~\ref{f0206}.
From Fact~\ref{f0206} we conclude that there is a formal equality of power series
$I^R_R(t)=P^R_D(t)$.
Fact~\ref{f0216} implies that $\cl D\tri\cl{C^0}$.
Hence, we may extend the given chain by adding the link
$\cl D\tri\cl{C^0}$ if necessary in order to assume 
that $C^0= D$. Similarly, we assume that $C^{d+1}=R$.

Fact~\ref{f0218} implies that, for $i=0,\ldots,d$ the  $R$-complex
$\rhom_R(C^{i+1},C^i)$ is semidualizing and $C^i$-reflexive. We observe
that $\cl{\rhom_R(C^{i+1},C^i)}\neq\cl R$.
Indeed, if not, then
$\rhom_R(C^{i+1},C^i)\simeq \shift^jR$ for some $j$, 
and this explains the second isomorphism in the following sequence.
$$C^{i+1}\simeq\rhom_R(\rhom_R(C^{i+1},C^i),C^i)
\simeq\rhom_R(\shift^jR,C^i)\simeq \shift^jC^i$$
The first isomorphism is by Definition~\ref{d0202}\eqref{d0202c},
and the third one is cancellation~\ref{f4112}.
These isomorphisms imply that $\cl{C^{i+1}}=\cl{C^{i}}$, contradicting our assumption
that $\cl{C^{i+1}}\trin\cl{C^{i}}$.

Set $m_i=\inf(\rhom_R(C^{i+1},C^i))$. Lemma~\ref{l0201} implies that 
$$\beta^R_j(\rhom_R(C^{i+1},C^i))\geq 1$$
for each $j\geq m_i$.
It follows that the series
$$P_i(t)=\sum_{n=0}^{\infty}\beta^R_{n+m_i}(\rhom_R(C^{i+1},C^i))t^n$$
is a power series with positive integer coefficients such that
\begin{equation} \label{thm0101f}
P^R_{\rhom_R(C^{i+1},C^i)}(t)=t^{m_i}P_i(t).
\end{equation}
Fact~\ref{f0220} yields the first isomorphism in the following sequence
\begin{equation} \label{thm0101a}
\begin{split}
D&= C^0 
\simeq\rhom_R(C^1,C^0)\lotimes_R\cdots\lotimes_R\rhom_R(C^{d+1},C^{d})\lotimes_RC^{d+1} \\
&\simeq\rhom_R(C^1,C^0)\lotimes_R\cdots\lotimes_R\rhom_R(C^{d+1},C^{d}).
\end{split}
\end{equation}
The equality and the second isomorphism are from the assumptions $C^0=D$ and $C^{d+1}=R$.
It follows from Fact~\ref{f0111} that
\begin{equation} \label{thm0101d}
g=\inf(D)=\sum_{i=0}^dm_i.
\end{equation}
The second equality in the next sequence follows from~\eqref{thm0101a}
using Fact~\ref{f0102}
\begin{align*}
I^R_R(t)
&=P^R_D(t) \\
&=P^R_{\rhom_R(C^{1},C^0)}(t)\cdots P^R_{\rhom_R(C^{d+1},C^d)}(t) \\
&=\left(t^{m_0}P_0(t)\right)\cdots \left(t^{m_d}P_d(t)\right) \\
&=t^{g}P_0(t)\cdots P_d(t)
\end{align*}
The first equality is by the choice of $D$;
the third equality is from~\eqref{thm0101f};
and the fourth equality is from~\eqref{thm0101d}.

Assume for this paragraph that $R$ is Cohen-Macaulay. Fact~\ref{f0201} yields
an isomorphism
$\rhom_R(C^{i+1},C^i)\simeq\shift^{s_i}B^i$ where 
$s_i=\inf(\rhom_R(C^{i+1},C^i))$ and $B^i$ is the semidualizing
$R$-module $\HH_{s_i}(\rhom_R(C^{i+1},C^i))$.
Since $\rhom_R(C^{i+1},C^i)$ is non-free, Fact~\ref{f0201'}
implies that $\beta^R_0(B^i)\geq 2$; this is the constant term of 
$P_i(t)$. The formula $I^R_R(t)=t^{g}P_0(t)\cdots P_d(t)$
implies that $\mu^g_R(R)$ is the product of the constant terms of the $P_i(t)$;
since each constant term is at least 2, it must be at least $p$.
This completes the proof in the case where $R$ has a dualizing complex.

Finally, we prove the result in general. The completion $\rhat$ has a dualizing
complex by Fact~\ref{f0215}. Also, the given chain gives rise to the following
chain in $\s(\rhat)$
$$\cl{\rhat \lotimes_RC^0}\trin\cl{\rhat \lotimes_RC^1}\trin\cdots\trin\cl{\rhat \lotimes_RC^d}\trin
\cl{\rhat \lotimes_RC^{d+1}}$$
by Fact~\ref{f0216}.
The previous case
yields power series $P_0(t),\ldots,P_d(t)$ with positive integer
coefficients such that
$I^{\rhat}_{\rhat}(t)=t^{\depth(\rhat)}P_0(t)\cdots P_d(t)$.
Hence, the desired conclusion follows from the equalities
$g=\depth(R)=\depth(\rhat)$ and
$I^R_R(t)=I^{\rhat}_{\rhat}(t)$, and the fact that
$R$ is Cohen-Macaulay if and only if $\rhat$ is Cohen-Macaulay.
\end{proof}

\begin{disc} \label{d1}
It is straightforward to use Fact~\ref{f0216} to give a slight strengthening of
Lemma~\ref{l0501}. Indeed, the condition ``$\s(R)$ contains a chain of length $d+1$''
is stronger than necessary; 
the proof shows that one can derive the same conclusions only assuming
that $\s(\rhat)$ contains a chain of length $d+1$.
Similar comments hold true for the remaining results in this section
and for the results of Section~\ref{sec06}.
\end{disc}

The next two results contain Theorem~\ref{thm0001} from the introduction
and follow almost directly from Lemma~\ref{l0501}.

\begin{thm} \label{thm0101}
Let $R$ be a local ring.
If $\s(R)$ contains a chain of length $d+1$,
then the sequence of Bass numbers $\{\mu^i_R(R)\}$ is bounded below by a polynomial in $i$
of degree~$d$.
\end{thm}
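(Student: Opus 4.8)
The plan is to obtain the theorem as an immediate numerical consequence of Lemma~\ref{l0501}. Set $g=\depth(R)$. Since $\s(R)$ contains a chain of length $d+1$, Lemma~\ref{l0501} provides power series $P_0(t),\dots,P_d(t)$, each with positive integer coefficients, such that $I^R_R(t)=t^gP_0(t)\cdots P_d(t)$. Write $P_i(t)=\sum_{n\geq 0}a_{i,n}t^n$, so that $a_{i,n}\geq 1$ for every $i$ and every $n\geq 0$. Everything then reduces to bounding below the coefficients of the product of the $P_i$.

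First I would carry out the elementary estimate. For $N\geq 0$, the coefficient $c_N$ of $t^N$ in $P_0(t)\cdots P_d(t)$ is
\[
c_N=\sum_{n_0+\cdots+n_d=N}a_{0,n_0}\cdots a_{d,n_d},
\]
the sum running over all $(d+1)$-tuples $(n_0,\dots,n_d)$ of nonnegative integers with $n_0+\cdots+n_d=N$. Each summand is at least $1$, and the number of such tuples is $\binom{N+d}{d}$ by the standard stars-and-bars count, so $c_N\geq\binom{N+d}{d}$. Comparing coefficients in $I^R_R(t)=t^g\sum_{N\geq 0}c_Nt^N$ yields $\mu^{N+g}_R(R)=c_N\geq\binom{N+d}{d}$ for all $N\geq 0$, that is, $\mu^i_R(R)\geq\binom{i-g+d}{d}$ for all $i\geq g$. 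The function $i\mapsto\binom{i-g+d}{d}$ is a polynomial in $i$ of degree $d$ with positive leading coefficient $1/d!$, which is precisely the asserted lower bound.

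There is essentially no obstacle here once Lemma~\ref{l0501} is in hand; as the introduction puts it, this is ``straightforward numerics,'' with all the substance lying in Lemma~\ref{l0501} (and, behind it, in Gerko's factorization, Fact~\ref{f0220}). The one point worth a word of care is the meaning of ``bounded below by a polynomial of degree $d$'': the estimate above is valid for $i\geq g$, and since $\mu^i_R(R)=0$ for $i<g$ one cannot expect the same polynomial to dominate the sequence for all $i\in\bbz$. Formulating the conclusion for $i\geq g$ (equivalently, for all sufficiently large $i$) is the right statement, and it already forces $\{\mu^i_R(R)\}$ to be non-constant and, when $d\geq 1$, unbounded.
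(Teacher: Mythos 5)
Your argument is correct and follows the paper's proof essentially verbatim: invoke Lemma~\ref{l0501} for the factorization $I^R_R(t)=t^gP_0(t)\cdots P_d(t)$, then bound the coefficients of $P_0\cdots P_d$ below coefficientwise by those of $\bigl(\sum_{n\geq 0}t^n\bigr)^{d+1}$. The only difference is that you make the ``straightforward numerics'' explicit by identifying that coefficient as $\binom{N+d}{d}$, and you add a sensible clarification that the polynomial lower bound is meant for $i\geq\depth(R)$; neither changes the substance.
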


\begin{proof}
Assume that 
$\s(R)$ contains a chain 
of length $d+1$.
Lemma~\ref{l0501} implies that there exist power series $P_0(t),\ldots,P_d(t)$ with positive integer
coefficients satisfying the equality in the following sequence
\begin{align*}
I^R_R(t)
=t^{\depth(R)}P_0(t)\cdots P_d(t)
\succeq t^{\depth(R)}\left(\sum_{n=0}^\infty t^n\right)^{\!\!d+1}.
\end{align*}
The coefficientwise inequality follows from the fact that each coefficient of 
$P_j(t)$ is a positive integer.

It is well known that the degree-$i$ coefficient of the series
$\left(\sum_{n=0}^\infty t^n\right)^{d+1}$ is given by a
polynomial in $i$ of degree $d$. It follows that the same is true of the 
coefficients of the series
$t^{\depth(R)}\left(\sum_{n=0}^\infty t^n\right)^{d+1}$. Hence, the 
degree-$i$ coefficient of the Bass series $I^R_R(t)$,
i.e., the $i$th Bass number $\mu^i_R(R)$, is bounded below
by such a polynomial. 
\end{proof}

\begin{cor} \label{cor0101}
Let $R$ be a local ring.
If $R$ has a semidualizing complex that is neither dualizing nor free,
then the sequence  of Bass numbers 
$\{\mu^i_R(R)\}$ is bounded below by a linear polynomial in $i$ and hence
is not eventually constant.
\end{cor}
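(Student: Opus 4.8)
The plan is to obtain Corollary~\ref{cor0101} from Theorem~\ref{thm0101} with $d=1$, for which I need a chain of length $2$ in the poset of shift‑isomorphism classes of semidualizing complexes. The difficulty with working over $R$ directly is that a semidualizing $R$-complex $C$ only visibly produces the length‑$1$ chain $\cl C\trin\cl R$ (one has $\cl C\tri\cl R$ by Fact~\ref{f0216}, and $\cl C\neq\cl R$ precisely because $C$ is not free), and there need not be a dualizing complex over $R$ to prepend. So the first move is to pass to the completion $\rhat$, where Fact~\ref{f0215} guarantees a dualizing complex.

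In detail: let $C$ be a semidualizing $R$-complex that is neither dualizing nor free, and set $\ol C=\rhat\lotimes_RC$. By Fact~\ref{f0204}, $\ol C$ is semidualizing for $\rhat$; moreover the injectivity of $\s(R)\hookrightarrow\s(\rhat)$ from that same fact sends $\cl C\neq\cl R$ to $\cl{\ol C}\neq\cl{\rhat}$, so $\ol C$ is not free, and since $C$ is dualizing if and only if $\ol C$ is dualizing (again Fact~\ref{f0204}), $\ol C$ is not dualizing either. Now pick a dualizing complex $D$ for $\rhat$. Fact~\ref{f0216} gives $\cl D\tri\cl{\ol C}\tri\cl{\rhat}$ in $\s(\rhat)$, and both relations are strict: $\cl{\ol C}\neq\cl{\rhat}$ as just noted, and $\cl D\neq\cl{\ol C}$, since otherwise $\ol C\simeq\shift^jD$ for some $j$ would itself be dualizing by Fact~\ref{f0203}. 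Thus $\cl D\trin\cl{\ol C}\trin\cl{\rhat}$ is a chain of length $2$ in $\s(\rhat)$.

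Finally I would apply Theorem~\ref{thm0101} to the local ring $\rhat$ (with $d=1$), concluding that $\{\mu^i_{\rhat}(\rhat)\}$ is bounded below by a polynomial in $i$ of degree $1$, and transfer this to $R$ via the equality $I^R_R(t)=I^{\rhat}_{\rhat}(t)$ from Fact~\ref{f0206}; equivalently, one may invoke the completion‑friendly form of Theorem~\ref{thm0101} recorded in Remark~\ref{d1}. Tracing the proof of Theorem~\ref{thm0101}, the relevant bounding polynomial is $i-\depth(R)+1$, which is genuinely linear with positive leading coefficient, so $\mu^i_R(R)\to\infty$ and in particular the sequence $\{\mu^i_R(R)\}$ is not eventually constant. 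I do not expect a real obstacle here: once the length‑$2$ chain is in hand the corollary is immediate, and the only point needing attention is the routine bookkeeping at the completion — that $\ol C$ inherits "neither dualizing nor free" from $C$ — which is exactly what Fact~\ref{f0204} supplies.
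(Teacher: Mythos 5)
Your proof is correct and takes essentially the same approach as the paper: pass to the completion, form the length-$2$ chain $\cl{D'}\trin\cl{\rhat\lotimes_RC}\trin\cl{\rhat}$ in $\s(\rhat)$, and apply Theorem~\ref{thm0101} (with $d=1$) to $\rhat$, transferring back via $\mu^i_R(R)=\mu^i_{\rhat}(\rhat)$. You in fact supply details the paper elides, namely the verification via Fact~\ref{f0204} that $\rhat\lotimes_RC$ inherits ``neither free nor dualizing'' from $C$, so that both links of the chain are genuinely strict.
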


\begin{proof}
The assumption on $R$ yields a chain in $\s(\rhat)$
of the form $\cl{D'}\trin\cl \chat\trin\cl \rhat$, so the result follows from
Theorem~\ref{thm0101} using the equality
$\mu^i_R(R)=\mu^i_{\rhat}(\rhat)$.
\end{proof}

\section{Bounding Lengths of Chains of Semidualizing Complexes} \label{sec06}

In this section we use Lemma~\ref{l0501} to show how the Bass numbers of
$R$ in low degree can be used to bound the lengths of chains in $\s(R)$. 
The first two results 
contain Theorem~\ref{thm0002} from the introduction
and focus on the first two nonzero Bass numbers.
The results of this section are not  exhaustive.
Instead, they are meant to give a sampling  of applications of Lemma~\ref{l0501}.
For instance, the same technique can be used to give similar bounds
in terms of higher-degree Bass numbers.

\begin{thm} \label{prop0101}
Let $R$ be a local Cohen-Macaulay ring of depth $g$,
and let $h$ denote the number of prime factors of the integer
$\mu^g_R(R)$, counted with multiplicity. 
If $R$ has a chain of semidualizing modules of length $d$,
then $d\leq h\leq\mu^g_R(R)$.
In particular, if $\mu^g_R(R)$
is prime, then every semidualizing $R$-module
is either free or dualizing for $R$.
\end{thm}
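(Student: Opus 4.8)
The plan is to derive Theorem~\ref{prop0101} almost directly from Lemma~\ref{l0501} by unwinding what a chain of semidualizing \emph{modules} gives us. Suppose $R$ is local Cohen-Macaulay of depth $g$ and that $\s_0(R)$ contains a chain $\cl{C^1}\trin\cdots\trin\cl{C^d}$ of length $d$. First I would push this up to a chain in $\s(R)$ of the same length using Fact~\ref{f0298}, which says the embedding $\s_0(R)\hookrightarrow\s(R)$ perfectly respects the orderings; then, since $R$ is Cohen-Macaulay (hence $\cl R\tri$ everything and, after completing, a dualizing complex is $\tri$ everything — see Fact~\ref{f0216}), I can pad the chain at both ends to produce a chain of length $d+1$ in $\s(R)$ of the form $\cl{C^0}\trin\cl{C^1}\trin\cdots\trin\cl{C^d}$ with room for the ``$+1$'' coming from the link to $\cl R$. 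Concretely: the hypothesis ``chain of semidualizing modules of length $d$'' means $d+1$ distinct classes, and by appending $\cl R$ (which is strictly above a non-free term, or one sees the chain already reaches $\cl R$) one gets the input format ``$\s(R)$ contains a chain of length $d+1$'' that Lemma~\ref{l0501} wants. I should be a little careful with the bookkeeping of ``length'' versus ``number of terms'' here; the safest route is to observe that $d$ semidualizing modules with $\cl{C^1}\trin\cdots\trin\cl{C^d}$ together with $\cl R$ (noting $\cl{C^d}\tri\cl R$ and, if $C^d$ is not free, $\cl{C^d}\trin\cl R$; if $C^d\cong R$ we already have the desired chain) gives a strictly increasing chain with $d+1$ comparisons' worth of data, matching the hypothesis of Lemma~\ref{l0501} with its ``$d+1$.''

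Next I would invoke Lemma~\ref{l0501} directly. It gives power series $P_0(t),\ldots,P_{d-1}(t)$ (indexed so there are $d$ of them, matching a chain of length $d$ fed in as ``$(d-1)+1$,'' or $P_0,\dots,P_d$ in the padded formulation — I will fix indices to be internally consistent) with positive integer coefficients such that $I^R_R(t)=t^g P_0(t)\cdots P_{d-1}(t)$, and moreover, since $R$ is Cohen-Macaulay, each $P_i(t)$ has constant term at least $2$ (this is the ``$p\geq 2$'' clause, using that each $\rhom_R(C^{i+1},C^i)$ is non-free so its underlying semidualizing module has $\beta_0\geq 2$ by Fact~\ref{f0201'}). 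Taking constant terms on both sides: the constant term of $I^R_R(t)$ is $\mu^g_R(R)$ (the first nonzero Bass number, occurring in degree $g=\depth R$ since $R$ is Cohen-Macaulay), and it equals the product of the $d$ constant terms of the $P_i$, each of which is an integer $\geq 2$. Therefore $\mu^g_R(R)$ factors as a product of $d$ integers each $\geq 2$. Counting prime factors with multiplicity, each such factor contributes at least one prime, so $d\leq h$ where $h$ is the number of prime factors of $\mu^g_R(R)$ counted with multiplicity; and trivially $h\leq \mu^g_R(R)$ since every integer $\geq 2$ has at least one prime factor and $2^h\leq\mu^g_R(R)$ in fact gives more, but the crude bound $h\leq\mu^g_R(R)$ is immediate.

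For the final sentence: if $\mu^g_R(R)$ is prime then $h=1$, so any chain of semidualizing modules has length $d\leq 1$, i.e.\ $\s_0(R)$ has no strict relations $\cl B\trin\cl C$. In particular, for any semidualizing module $C$ we have $\cl C\tri\cl R$ (Fact~\ref{f0216'}), hence $\cl C=\cl R$, i.e.\ $C\cong R$. Wait — I need to also capture ``dualizing,'' so let me restate: when $\mu^g_R(R)$ is prime, I claim every semidualizing $R$-module is free or dualizing. Take $C$ semidualizing. After completing (harmless by Facts~\ref{f0204'} and~\ref{f0216'}), $R$ has a dualizing module $D$, and $\cl D\tri\cl C\tri\cl R$. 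If $\cl D=\cl C$ then $C$ is dualizing; if $\cl C=\cl R$ then $C$ is free; the possibility $\cl D\trin\cl C\trin\cl R$ would be a chain of length $2$, contradicting $d\leq h=1$. So indeed $C$ is free or dualizing, and descending back to $R$ preserves this.

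\medskip

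I expect the only real obstacle to be \emph{index/length bookkeeping}: making sure that ``a chain of semidualizing modules of length $d$'' translates to exactly the right hypothesis of Lemma~\ref{l0501} (which is stated for $\s(R)$ and phrased with ``$d+1$''), and correspondingly that the number of factors $P_i$ and hence the number of integer factors of $\mu^g_R(R)$ comes out to exactly $d$ and not $d\pm 1$. The substantive mathematics — the factorization of the Bass series, the non-freeness of the Hom-complexes, the $\beta_0\geq 2$ estimate — is entirely supplied by Lemma~\ref{l0501} and Facts~\ref{f0201'}, \ref{f0201}, \ref{f0298}, \ref{f0216}, so once the chain is correctly set up the proof is a one-line extraction of constant terms plus the elementary observation that a product of $d$ integers $\geq 2$ has at least $d$ prime factors counted with multiplicity.
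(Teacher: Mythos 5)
Your proof is correct and follows essentially the same route as the paper: Lemma~\ref{l0501} applied to the length-$d$ chain gives $I^R_R(t)=t^gP_1(t)\cdots P_d(t)$ with each $P_i$ having constant term $\geq 2$, so $\mu^g_R(R)$ is a product of $d$ integers $\geq 2$, yielding $d\leq h\leq\mu^g_R(R)$, and for the prime case one completes and examines the chain $\cl{D'}\tri\cl{\chat}\tri\cl{\rhat}$ in $\s(\rhat)$. The padding you tentatively introduce is unnecessary (and you correctly abandon it in the end): Lemma~\ref{l0501}, with its parameter set to $d-1$, already applies to a chain of length $d$ and produces exactly $d$ factors, so there is nothing to pad.
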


\begin{proof}
By  Lemma~\ref{l0501}, the existence of a chain in $\s_0(R)$
of length $d$ yields a factorization
$I^R_R(t)=t^{g}P_1(t)\cdots P_{d}(t)$
where each $P_i(t)$ is a power series  with positive integer
coefficients 
and constant term $a_i\geq 2$. We then have
$$\mu^g_R(R)=a_1\cdots a_{d}$$
so the inequalities $d\leq h\leq\mu^g_R(R)$ 
follow from the basic properties of factorizations of integers.

Assume now that $\mu^g_R(R)$ is prime and let $C$ be a semidualizing 
$R$-module. 
The ring $\rhat$ has a dualizing module $D'$ by Fact~\ref{f0215'},
and Fact~\ref{f0216'} shows that there is a chain $\cl{D'}\tri\cl \chat\tri\cl \rhat$ in $\s(\rhat)$.
This chain must have length at most 1
since the Bass number $\mu^g_{\rhat}(\rhat)=\mu^g_R(R)$ is prime.
Hence, either $\chat\cong\rhat$ or $\chat\cong D'$.
From Fact~\ref{f0204'}, it follows that
the $R$-module $C$ is either free or dualizing for $R$.
\end{proof}

\begin{thm} \label{prop0103}
Let $R$ be a local ring of depth $g$.
If $R$ has a chain of semidualizing complexes of length $d$,
then $d\leq\mu^{g+1}_R(R)$. In particular, the set $\s(R)$ does not contain
arbitrarily long chains.
\end{thm}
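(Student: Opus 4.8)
The plan is to deduce Theorem~\ref{prop0103} directly from Lemma~\ref{l0501}, by reading off the coefficient of $t^{g+1}$ in the Bass series $I^R_R(t)$, in exact analogy with the proof of Theorem~\ref{prop0101}, which reads off the coefficient of $t^g$. The asserted inequality is automatic when $d=0$ since Bass numbers are nonnegative, so first I would assume $d\geq 1$ and fix a chain of semidualizing complexes of length $d$, equivalently a chain of length $d$ in $\s(R)$.

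Next I would apply Lemma~\ref{l0501} to this chain, so that its factorization has exactly $d$ factors: it produces power series $P_0(t),\ldots,P_{d-1}(t)$, each with positive integer coefficients, such that $I^R_R(t)=t^{g}P_0(t)\cdots P_{d-1}(t)$. Lemma~\ref{l0501} already incorporates the passage to the completion, using $g=\depth(R)=\depth(\rhat)$ and $I^R_R(t)=I^{\rhat}_{\rhat}(t)$, so no extra reduction is needed. Writing $P_i(t)=\sum_{n\geq 0}c_{i,n}t^n$ with every $c_{i,n}$ a positive integer, one has in particular $c_{i,0}\geq 1$ and $c_{i,1}\geq 1$ for every $i$. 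Comparing coefficients of $t^{g+1}$ on the two sides of the displayed factorization, $\mu^{g+1}_R(R)$ equals the coefficient of $t$ in $P_0(t)\cdots P_{d-1}(t)$, namely $\sum_{j=0}^{d-1}c_{j,1}\prod_{i\neq j}c_{i,0}$. This is a sum of $d$ terms, each a product of positive integers and hence at least $1$, so $\mu^{g+1}_R(R)\geq d$, which is the claimed bound. The ``in particular'' then follows immediately, since $\mu^{g+1}_R(R)$ is a fixed finite integer and so no chain in $\s(R)$ can have length exceeding it.

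I do not expect a genuine obstacle here: all the substance is packaged inside Lemma~\ref{l0501}, and what remains is the elementary observation that the linear coefficient of a product of $d$ power series with positive integer coefficients is at least $d$. The only points that need a little care are the bookkeeping on the meaning of ``chain of length $d$'' (a chain with $d$ strict links yields $d$ reflexivity factors $\rhom_R(C^{i+1},C^i)$, hence $d$ series $P_i$ in Lemma~\ref{l0501}), and the harmless degenerate situations, namely $d=0$ and the case where $\s(R)$ consists only of the class $\cl R$, in which $\mu^{g+1}_R(R)$ may vanish.
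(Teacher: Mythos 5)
Your proposal is correct and follows essentially the same route as the paper: both apply Lemma~\ref{l0501} to obtain the factorization $I^R_R(t)=t^g\prod_{i}P_i(t)$ into $d$ power series with positive integer coefficients, then read off the coefficient of $t^{g+1}$ as a sum of $d$ products of positive integers to conclude $\mu^{g+1}_R(R)\geq d$. The only cosmetic differences are your indexing and your explicit remark about the degenerate case $d=0$, which the paper leaves implicit.
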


\begin{proof}
Assume that $\s(R)$ contains a chain of length $d$. Lemma~\ref{l0501}
yields power series $P_1(t),\ldots,P_d(t)$ with positive integer
coefficients such that
\begin{equation} 
\label{prop0103d}
I^R_R(t)=t^{g}P_1(t)\cdots P_d(t).
\end{equation}
For each index $i$, write $P_i(t)=\sum_{j=0}^\infty a_{i,j}t^j$.
By calculating the degree $g+1$ coefficient in~\eqref{prop0103d}, we obtain the first equality
in the following sequence
\begin{equation*} 
\mu^{g+1}_R(R)
=\sum_{i=1}^{d}\frac{a_{1,0}\cdots a_{d,0}}{a_{i,0}}a_{i,1}\geq\sum_{i=1}^{d}a_{i,1}
\geq\sum_{i=1}^{d}1=d.
\end{equation*}
The inequalities are from the conditions
$a_{j,0},a_{i,1}\geq 1$.
\end{proof}

The next result gives an indication how other Bass numbers can also give
information about the chains in $\s(R)$.

\begin{prop} \label{prop0102}
Let $R$ be a local ring of depth $g$.
\begin{enumerate}[\quad\rm(a)]
\item \label{prop0102a}
If $\mu^i_R(R)\leq i-g$ for some index $i\geq g$, then every semidualizing $R$-complex
is either free or dualizing for $R$.
\item \label{prop0102b}
Assume that $R$ is Cohen-Macaulay and let $p$ be the smallest prime divisor of 
$\mu^g_R(R)$. 
If $\mu^i_R(R)< 2p+i-g-1$ for some index $i> g$, then every semidualizing $R$-module
is either free or dualizing for $R$.
\end{enumerate}
\end{prop}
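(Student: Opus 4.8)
The plan is to prove both parts by contraposition, reducing each to Lemma~\ref{l0501} after passing to the completion. In each case I assume that $R$ has a semidualizing object (a complex in~\eqref{prop0102a}, a module in~\eqref{prop0102b}) that is neither free nor dualizing, produce a short chain in $\s(\rhat)$, factor the Bass series $I^R_R(t)=I^{\rhat}_{\rhat}(t)$ accordingly, and then read off a lower bound for the relevant Bass number that contradicts the hypothesis.

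For~\eqref{prop0102a}, let $C$ be a semidualizing $R$-complex that is neither free nor dualizing. By Fact~\ref{f0215} the completion $\rhat$ has a dualizing complex $D'$, and by Facts~\ref{f0204} and~\ref{f0216} the complex $\rhat\lotimes_RC$ is semidualizing for $\rhat$ and is neither shift-isomorphic to $\rhat$ nor dualizing; hence $\cl{D'}\trin\cl{\rhat\lotimes_RC}\trin\cl{\rhat}$ is a chain of length~$2$ in $\s(\rhat)$. Applying Lemma~\ref{l0501} to $\rhat$ (which has depth $g$) produces power series $P_0(t)=\sum_{j\geq0}a_jt^j$ and $P_1(t)=\sum_{j\geq0}b_jt^j$ with all $a_j,b_j\geq1$ such that $I^R_R(t)=I^{\rhat}_{\rhat}(t)=t^gP_0(t)P_1(t)$, using $\depth(R)=\depth(\rhat)=g$. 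Comparing the coefficients of $t^{g+m}$ gives $\mu^{g+m}_R(R)=\sum_{k=0}^{m}a_kb_{m-k}\geq m+1$ for every $m\geq0$, since this sum has $m+1$ terms each at least $1$. Taking $m=i-g\geq0$ yields $\mu^i_R(R)\geq i-g+1$, contradicting $\mu^i_R(R)\leq i-g$.

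For~\eqref{prop0102b}, let $C$ be a semidualizing $R$-module that is neither free nor dualizing (equivalently, by Fact~\ref{f0214}, a semidualizing $R$-complex with these properties). Then $\rhat$ is Cohen-Macaulay with $\mu^g_{\rhat}(\rhat)=\mu^g_R(R)$, and the same reasoning as above furnishes a chain $\cl{D'}\trin\cl{\rhat\lotimes_RC}\trin\cl{\rhat}$ of length~$2$ in $\s(\rhat)$ together with the factorization $I^R_R(t)=t^gP_0(t)P_1(t)$; this time the Cohen-Macaulay clause of Lemma~\ref{l0501} additionally gives $a_0\geq p$ and $b_0\geq p$. For $m\geq1$ we split $\mu^{g+m}_R(R)=\sum_{k=0}^{m}a_kb_{m-k}=a_0b_m+a_mb_0+\sum_{k=1}^{m-1}a_kb_{m-k}\geq p+p+(m-1)$, where the last sum is empty (hence $\geq0$) when $m=1$. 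Thus $\mu^{g+m}_R(R)\geq2p+m-1$, and taking $m=i-g\geq1$ contradicts $\mu^i_R(R)<2p+i-g-1$.

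The substance is entirely in Lemma~\ref{l0501}; what remains is bookkeeping, and that is where I expect the only friction. One must match the length-$2$ chain to the ``length $d+1$'' hypothesis of Lemma~\ref{l0501} (so $d=1$ and there are exactly the two factors $P_0,P_1$), and one must check that each property in play --- being semidualizing, being shift-isomorphic to the ring, being dualizing, having depth $g$, and the value of $\mu^g$ --- transfers between $R$ and $\rhat$ (via Facts~\ref{f0204}, \ref{f0204'}, \ref{f0214}, \ref{f0215}, \ref{f0216} and the equalities $\depth(R)=\depth(\rhat)$, $I^R_R(t)=I^{\rhat}_{\rhat}(t)$). Finally, in~\eqref{prop0102b} one should observe that in the contradiction scenario $\mu^g_R(R)=a_0b_0\geq4$, so the smallest prime divisor $p$ genuinely exists (equivalently, $R$ is not Gorenstein, else the conclusion holds by Fact~\ref{f0231'}).
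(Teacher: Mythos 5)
Your proof is correct and follows essentially the same route as the paper: contraposition, extend the non-free non-dualizing semidualizing object to a length-$2$ chain in $\s(\rhat)$, invoke Lemma~\ref{l0501} to get $I^R_R(t)=t^gP_0(t)P_1(t)$, and then read off the coefficient bounds $\mu^i_R(R)\geq i-g+1$ for part~(a) and $\mu^i_R(R)\geq a_0+b_0+(i-g-1)\geq 2p+i-g-1$ for part~(b). Your closing remark that $\mu^g_R(R)\geq 4$ in the contradiction scenario, so that $p$ exists, is a small piece of bookkeeping the paper leaves implicit (if $\mu^g_R(R)=1$ the ring is Gorenstein and the conclusion is trivial), but otherwise the two arguments coincide.
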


\begin{proof}
We prove the contrapositive of each statement.
Assume that $R$ has a semidualizing complex that is neither free nor 
dualizing. The set $\s(\rhat)$ then has a chain 
$\cl D\trin\cl C\trin\cl R$, so Lemma~\ref{l0501}
yields power series $P_1(t),P_2(t)$ with positive integer
coefficients such that
$I^R_R(t)=t^{g}P_1(t) P_2(t)$.
Write $P_1(t)=\sum_{i=0}^\infty a_it^i$ and $P_2(t)=\sum_{i=0}^\infty b_it^i$.
It follows that, for each index $i\geq g$, we have
\begin{equation} \label{prop0102c}
\mu^i_R(R)
=\sum_{j=0}^{i-g}a_jb_{i-g-j}.
\end{equation}
\eqref{prop0102a}
Since each $a_j,b_j\geq 1$, the equation~\eqref{prop0102c} implies that
$$\mu^i_R(R)
=\sum_{j=0}^{i-g}a_jb_{i-g-j}
\geq\sum_{j=0}^{i-g}1
=i-g+1>i-g.
$$
\eqref{prop0102b}
Assume that $R$ is Cohen-Macaulay.
Lemma~\ref{l0501} implies that $a_0,b_0\geq p$. Assuming that $i>g$, 
equation~\eqref{prop0102c} reads
\begin{xxalignat}{3}
  &{\hphantom{\square}}& \mu^i_R(R)
&=\sum_{j=0}^{i-g}a_jb_{i-g-j}
\geq a_0+b_0+\sum_{j=1}^{i-g-1}1
\geq 2p+i-g-1. &&\qedhere
\end{xxalignat}
\end{proof}

The next example shows how Proposition~\ref{prop0102} applies to the
ring from~\ref{ex0101}.

\begin{ex} \label{ex0601}
Let $k$ be a field and set $R=k[\![X,Y]\!]/(X^2,XY)$. Then $R$ is a complete local ring
of dimension 1 and depth 0. 
From Example~\ref{ex0101}
we have $\mu^2_R(R)=2$, so Proposition~\ref{prop0102}
implies that $\s(R)=\{\cl R,\cl D\}$.
\end{ex}

We conclude this section with some questions that arise naturally from this work
and from the literature on Bass numbers, followed by some discussion.

\begin{question} \label{q}
Let $R$ be a local ring and  $C$  a non-free semidualizing $R$-complex.
\begin{enumerate}[\quad(a)]
\item \label{q1}
Must the sequence $\{\beta^R_i(C)\}$ eventually be strictly increasing?
\item \label{q1'}
Must the sequence $\{\beta^R_i(C)\}$ be nondecreasing?
\item \label{q2} 
Must the sequence $\{\beta_i^R(C)\}$ be unbounded?
\item \label{q3} 
Can the sequence $\{\beta_i^R(C)\}$ be bounded above by a polynomial in $i$?
\item \label{q4} 
Must the sequence $\{\beta_i^R(C)\}$ grow exponentially?
\item \label{q5} 
If $C$ is not dualizing for $R$, must the sequence $\{\mu^i_R(R)\}$ be strictly increasing?
\end{enumerate}
\end{question}

\begin{disc} \label{d2}
Question~\ref{q}\eqref{q1} relates to~\cite[Question 2]{christensen:iabbn}
where it is asked whether the Bass numbers of a non-Gorenstein local ring must
eventually be strictly increasing.
(Note that Example~\ref{ex0101} shows that they need not be always strictly increasing.)
If Question~\ref{q}\eqref{q1} is answered in the affirmative,
then so is~\cite[Question 2]{christensen:iabbn} since the Bass numbers of $R$
are given as the Betti numbers of the dualizing complex for $\rhat$.
Part~\eqref{q1'} is obviously similar to part~\eqref{q1},
and parts~\eqref{q2}--\eqref{q4} of Question~\ref{q} relate similarly to Question~\ref{q0001}.

Question~\ref{q}\eqref{q5} is a bit different. The idea here is that the existence of a semidualizing
$R$-complex that is not free and not dualizing provides a chain of length 2 in $\s(\rhat)$.
Hence, Lemma~\ref{l0501} gives a nontrivial factorization  $I^R_R(t)=t^gP_1(t)P_2(t)$
where each $P_i(t)=t^{m_i}P^R_{C^i}(t)$ for some non-free semidualizing 
$R$-complex $C^i$. If the coefficients of each $P_i(t)$ are strictly increasing,
then the coefficients of the product 
$I^R_R(t)=t^gP_1(t)P_2(t)$ are also strictly increasing. Note, however,
that the positivity of the coefficients of the $P_i(t)$ is not enough to ensure that
the coefficients of $I^R_R(t)$ are strictly increasing.
For instance, we have
$$(2+t+t^2+t^3+\cdots)(5+t+t^2+t^3+\cdots)
=10+7t+8t^2+9t^3+\cdots.$$
\end{disc}

\appendix

\section{Homological Algebra for Complexes} \label{sec01}

This appendix contains notation and useful facts about chain complexes for use
in Sections~\ref{sec02}--\ref{sec06}.
We do not attempt to explain every detail about
complexes that we use. For this, we recommend that the interested reader 
consult a text like~\cite{gelfand:moha} or~\cite{hartshorne:rad}.
Instead, we 
give heuristic explanations of the ideas coupled with explicit connections
to the corresponding notions for modules.
This way, the reader who is familiar with the homological
algebra of modules can get a feeling for the subject
and will possibly be motivated to investigate the subject more deeply.

\begin{defn} \label{d0111}
A \emph{chain complex of $R$-modules}, or \emph{$R$-complex} for short,
is a sequence of $R$-module homomorphisms
$$X=\cdots\xra{\partial^X_{i+1}}X_{i}\xra{\partial^X_{i}}X_{i-1}\xra{\partial^X_{i-1}}\cdots$$
such that $\partial^X_i\partial^X_{i+1}=0$ for each $i\in\bbz$.
The $i$th \emph{homology module} of an $R$-complex $X$ is 
the $R$-module
$\HH_i(X)=\ker(\partial^X_i)/\im(\partial^X_{i+1})$.
A \emph{morphism} of chain complexes $f\colon X\to Y$ is a sequence of $R$-module
homomorphisms $\{f_i\colon X_i\to Y_i\}_{i\in\bbz}$ 
making the following diagram commute
$$\xymatrix{
X\ar[d]_f
& \cdots\ar[r]^-{\partial^X_{i+1}}
&X_{i}\ar[r]^-{\partial^X_{i}}\ar[d]_{f_i}
&X_{i-1}\ar[r]^-{\partial^X_{i-1}}\ar[d]_{f_{i-1}}
&\cdots \\
Y
& \cdots\ar[r]^-{\partial^Y_{i+1}}
&Y_{i}\ar[r]^-{\partial^Y_{i}}
&Y_{i-1}\ar[r]^-{\partial^Y_{i-1}}
&\cdots 
}$$
that is, such that $\partial^Y_if_i=f_{i-1}\partial^X_i$ for all $i\in\bbz$.
A morphism $f\colon X\to Y$ induces an $R$-module homomorphism
$\HH_i(f)\colon\HH_i(X)\to\HH_i(Y)$ for each $i\in\bbz$.
The morphism $f$ is a \emph{quasiisomorphism} if the map $\HH_i(f)$
is an isomorphism for each $i\in\bbz$.
\end{defn}

\begin{notation} \label{n0101}
The category of $R$-complexes is denoted $\catc(R)$.
The category of $R$-modules is denoted $\catm(R)$.
Isomorphisms in  each of these categories are identified by the symbol $\cong$,
and quasiisomorphisms in $\catc(R)$
are identified by the symbol $\simeq$.

The derived category of $R$-complexes is denoted $\catd(R)$.
Morphisms in $\catd(R)$ are 
equivalence classes of diagrams of morphisms in $\catc(R)$.
Isomorphisms in $\catd(R)$ 
correspond to quasiisomorphisms in $\catc(R)$ and
are identified by the symbol $\simeq$.
\end{notation}

The connection between $\catd(R)$ and $\catm(R)$
comes from the following.

\begin{fact} \label{f0103}
Each $R$-module $M$ is naturally associated with an $R$-complex concentrated in
degree 0, namely the complex $0\to M\to 0$. 
We use the symbol $M$ to designate both the module and the associated
complex. With this notation we have
$$\HH_i(M)\cong\begin{cases}
M & \text{if $i=0$} \\ 0 & \text{if $i\neq 0$.}\end{cases}$$
This association gives rise to a full 
embedding of the module category $\catm(R)$ into the derived category $\catd(R)$.
In particular, for $R$-modules $M$ and $N$ we have
$M\cong N$ in $\catm(R)$ if and only if  $M\simeq N$ in $\catd(R)$.
\end{fact}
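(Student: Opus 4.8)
The plan is to treat the three assertions in turn, all of which flow from understanding the functor $F\colon\catm(R)\to\catd(R)$.

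First I would read off the homology formula directly from Definition~\ref{d0111}: in the complex $M=(0\to M\to 0)$ concentrated in degree $0$ every differential vanishes, so $\HH_0(M)=\ker(\partial^M_0)/\im(\partial^M_1)=M$, while $\HH_i(M)=0$ for $i\neq 0$ because $M_i=0$ in those degrees. Since an $R$-module homomorphism $f\colon M\to N$ is literally a morphism between the associated degree-$0$ complexes, the rule $M\mapsto(0\to M\to 0)$, $f\mapsto f$ defines a functor $\catm(R)\to\catc(R)$; composing with the localization functor $\catc(R)\to\catd(R)$ produces the functor $F$ in question, which is visibly injective on objects.

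The substance of the statement is that $F$ is full and faithful, i.e. that for $R$-modules $M,N$ the natural map $\Hom_R(M,N)\to\Hom_{\catd(R)}(M,N)$ is a bijection. I would prove this by resolving the target: choose an injective resolution $N\res I$, so that $I$ is a complex of injective $R$-modules with $I_i=0$ for $i>0$, $\HH_0(I)\cong N$, $\HH_i(I)=0$ for $i\neq 0$, and $N\cong\ker(\partial^I_0)$. Invoking the standard fact that morphisms \emph{into} a complex of injectives of this kind are the same whether computed in $\catd(R)$ or in the homotopy category $K(R)$, one obtains
\[
\Hom_{\catd(R)}(M,N)\cong\Hom_{\catd(R)}(M,I)\cong\Hom_{K(R)}(M,I).
\]
A chain map $M\to I$ is a single homomorphism $M\to I_0$ annihilated by $\partial^I_0$, hence a homomorphism $M\to\ker(\partial^I_0)\cong N$; and the only chain homotopy between two such maps is the zero map $M\to I_1=0$, so $\Hom_{K(R)}(M,I)\cong\Hom_R(M,N)$. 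Tracing through the identifications shows that the resulting isomorphism $\Hom_R(M,N)\cong\Hom_{\catd(R)}(M,N)$ is the one induced by $F$. (In the language of the rest of the appendix this is just the computation $\Hom_{\catd(R)}(M,N)\cong\HH_0(\rhom_R(M,N))\cong\ext^0_R(M,N)\cong\Hom_R(M,N)$.)

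Finally, for the ``in particular'' clause: if $M\cong N$ in $\catm(R)$ then obviously $M\simeq N$ in $\catd(R)$, and conversely any isomorphism $M\simeq N$ in $\catd(R)$ equals $F(g)$ for some $g\in\Hom_R(M,N)$ by fullness, so $g$ is an isomorphism of modules because $F$, being fully faithful, reflects isomorphisms (alternatively, apply $\HH_0$ to the isomorphism $M\simeq N$). I expect the only genuine obstacle to be the comparison between $\catd(R)$-morphisms and $K(R)$-morphisms used above; in a survey of this sort I would simply cite it from~\cite{gelfand:moha} or~\cite{hartshorne:rad}, but to keep things elementary one can argue directly with roofs $M\xla{\simeq}X\xra{f}N$: since such an $X$ has homology concentrated in degree $0$, the good truncation $\tau_{\geq 0}X$ maps quasiisomorphically both into $X$ and onto $\HH_0(X)=M$, and a short diagram chase using the chain-map identity $f_0\circ\partial^X_1=0$ then rewrites every morphism of $\catd(R)$ between $M$ and $N$ as an honest $R$-homomorphism.
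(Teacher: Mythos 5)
The paper states Fact~\ref{f0103} as background without proof, so there is no argument in the paper to compare yours against; judged on its own, your proof is correct. The homology computation is immediate, the identification $\Hom_{\catd(R)}(M,N)\cong\Hom_{K(R)}(M,I)$ for a bounded-above complex of injectives $I$ is the standard tool, and your bookkeeping (a chain map $M\to I$ is a map $M\to I_0$ landing in $\ker\partial^I_0\cong N$, and all homotopies vanish because $I_1=0$) is exactly right. The alternative roof-based argument at the end is also sound: if $M\xla{\simeq}X\xra{f}N$ is a roof, then $X$ has homology concentrated in degree $0$, the truncation $\tau_{\geq 0}X$ maps quasiisomorphically to both $X$ and $\HH_0(X)\cong M$, and the chain-map condition $f_0\circ\partial^X_1=0$ lets $f$ factor through $\HH_0(X)$; this rewrites the roof as an honest module homomorphism. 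Your parenthetical remark $\Hom_{\catd(R)}(M,N)\cong\HH_0(\rhom_R(M,N))\cong\ext^0_R(M,N)\cong\Hom_R(M,N)$ is the version of the argument most consonant with the appendix's conventions (see Fact~\ref{f0106}), though as you note the first isomorphism there is itself the content of the injective-resolution comparison.
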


\begin{fact} \label{f0110}
Let $X$ and $Y$ be $R$-complexes. If $X\simeq Y$ in
$\catd(R)$, then we have $\HH_i(X)\cong\HH_i(Y)$ for all $i\in \bbz$.
The converse fails in general.
However, there is an
isomorphism $X\simeq \HH_0(X)$ in $\catd(R)$ if and only if $\HH_i(X)=0$ for all $i\neq 0$.
In particular, we have $X\simeq 0$ in $\catd(R)$ if and only if $\HH_i(X)=0$ for all $i\in\bbz$.
\end{fact}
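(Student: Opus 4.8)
The plan is to treat the three assertions in turn; the one tool needed throughout is that each $\HH_i(-)$ descends to a functor on $\catd(R)$ and that, by the construction recalled in Definition~\ref{d0111} and Notation~\ref{n0101}, every isomorphism in $\catd(R)$ is built from quasiisomorphisms of $\catc(R)$ together with their formal inverses. For the first assertion I would simply note that a quasiisomorphism $X\res Y$ induces isomorphisms $\HH_i(X)\xra{\cong}\HH_i(Y)$ by definition, so the same holds for any isomorphism $X\simeq Y$ in $\catd(R)$; equivalently, $\HH_i(-)$ sends isomorphisms to isomorphisms. For the failure of the converse I would exhibit a standard example: over $R=k[\![x]\!]/(x^2)$ the complex $Y=(0\to R\xra{x}R\to 0)$ concentrated in degrees $1$ and $0$ has $\HH_0(Y)\cong k\cong\HH_1(Y)$ and $\HH_i(Y)=0$ for $i\neq 0,1$, so it has the same homology as $k\oplus\shift k$; yet $Y\not\simeq k\oplus\shift k$, as one checks by a short computation, e.g.\ by observing that $\Hom_{\catd(R)}(-,k)$ has $k$-dimension $1$ on $Y$ but $2$ on $k\oplus\shift k$.

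For the second assertion, the forward implication is immediate from the first assertion and Fact~\ref{f0103}: if $X\simeq\HH_0(X)$ then $\HH_i(X)\cong\HH_i(\HH_0(X))$, which vanishes for $i\neq 0$. For the converse I would assume $\HH_i(X)=0$ for all $i\neq 0$ and build the isomorphism by two successive good truncations, since $X$ need not be bounded and so cannot simply be replaced by a resolution. First set $Z_0=\ker(\partial^X_0)$ and let $X'$ be the subcomplex with $X'_i=X_i$ for $i\geq 1$, $X'_0=Z_0$, and $X'_i=0$ for $i<0$; the inclusion $X'\hookrightarrow X$ is a chain map, and it is a quasiisomorphism because it induces isomorphisms on $\HH_i$ for $i\geq 0$ while both complexes are acyclic in negative degrees (this uses the hypothesis that $\HH_i(X)=0$ for $i<0$). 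Next let $X''$ be the module $\HH_0(X')$, which equals $\HH_0(X)$, regarded as a complex concentrated in degree $0$ via Fact~\ref{f0103}; the canonical surjection $X'\twoheadrightarrow X''$ is a chain map and a quasiisomorphism, since $\HH_i(X')=0$ for $i>0$ and the map induces the identity on $\HH_0$. Composing in $\catd(R)$ gives $X\simeq X'\simeq\HH_0(X)$, as claimed.

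The third assertion is the case $\HH_0(X)=0$ of the second: if all homology of $X$ vanishes then $X\simeq\HH_0(X)=0$, and conversely $X\simeq 0$ forces all homology to vanish by the first assertion. (One can also see this directly, as $0\to X$ is a quasiisomorphism exactly when $X$ is acyclic.)

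The step I expect to be the main obstacle is the converse of the second assertion: producing the comparison with $\HH_0(X)$ by hand for an arbitrary, possibly unbounded, complex and verifying carefully that the two truncation maps are genuine chain maps and quasiisomorphisms. The other place requiring more than bookkeeping is justifying that the converse of the first assertion genuinely fails, since that needs an actual computation --- such as the $\Hom_{\catd(R)}(-,k)$ comparison above --- and not merely a mismatch of homology modules.
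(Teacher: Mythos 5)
The paper presents Fact~\ref{f0110} without proof, as part of its background survey of the derived category, so there is no argument of the paper's to compare against; your approach via good truncations for the second assertion is the standard one, and the first, second, and third assertions are correctly handled.

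There is, however, a genuine error in your justification that the converse of the first assertion fails. The example $R=k[\![x]\!]/(x^2)$ with $Y=(0\to R\xra{x}R\to 0)$ in degrees $1,0$ is a good one --- $Y$ and $k\oplus\shift k$ do have isomorphic homology in every degree, and indeed $Y\not\simeq k\oplus\shift k$ in $\catd(R)$ --- but the invariant $\Hom_{\catd(R)}(-,k)$ that you propose does not distinguish them. Since $Y$ is a complex of free modules whose differential is multiplication by $x$, which annihilates $k$, the complex $\hom_R(Y,k)$ has zero differential; hence $\rhom_R(Y,k)\simeq k\oplus\shift^{-1}k$ and $\dim_k\Hom_{\catd(R)}(Y,k)=\dim_k\HH_0(\rhom_R(Y,k))=1$. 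On the other side, $\Hom_{\catd(R)}(\shift k,k)\cong\HH_0\bigl(\shift^{-1}\rhom_R(k,k)\bigr)\cong\ext^{-1}_R(k,k)=0$, so $\dim_k\Hom_{\catd(R)}(k\oplus\shift k,k)=\dim_k\hom_R(k,k)+0=1$ as well. One fix is to apply $\Hom_{\catd(R)}(k,-)$ instead: resolving $k$ by $\cdots\to R\xra{x}R\to 0$, one computes $\dim_k\Hom_{\catd(R)}(k,Y)=1$, whereas $\dim_k\Hom_{\catd(R)}(k,k\oplus\shift k)=\dim_k\ext^0_R(k,k)+\dim_k\ext^1_R(k,k)=1+1=2$. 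A cleaner fix, avoiding derived-category computations altogether, is to note that $Y$ is a bounded complex of finitely generated free modules, so $\pd_R(Y)\leq 1$, whereas $\pd_R(k\oplus\shift k)=\infty$ because $R$ is not regular; finiteness of projective dimension is a quasiisomorphism invariant, so $Y\not\simeq k\oplus\shift k$.
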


The next invariants conveniently measure the homological position
of a complex.

\begin{defn} \label{d0108}
The \emph{supremum} and  \emph{infimum} of an $R$-complex $X$ are, respectively
$$\sup(X)=\sup\{i\in\bbz\mid\HH_i(X)\neq 0\}
\qquad\text{and}\qquad
\inf(X)=\inf\{i\in\bbz\mid\HH_i(X)\neq 0\}$$
with the conventions $\inf\emptyset=\infty$ and $\sup\emptyset=-\infty$.
\end{defn}

\begin{fact} \label{f0108}
Let $X$ be an $R$-complex.
If $X\not\simeq 0$,
then $-\infty\leq\inf (X)\leq\sup(X)\leq\infty$.
Also $\inf (X)=\infty$ if and only if $X\simeq 0$ if and only if $\sup(X)=-\infty$.
If $M\neq 0$ is an $R$-module, considered as an $R$-complex, then
$\inf(M)=0=\sup(M)$.
\end{fact}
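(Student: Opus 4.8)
The plan is to reduce everything to the definitions of $\sup$ and $\inf$ from Definition~\ref{d0108}, the conventions $\inf\emptyset=\infty$ and $\sup\emptyset=-\infty$, and the characterization of $X\simeq 0$ in $\catd(R)$ supplied by Fact~\ref{f0110}. Throughout I would abbreviate $S=\{i\in\bbz\mid\HH_i(X)\neq 0\}$, so that by definition $\sup(X)=\sup S$ and $\inf(X)=\inf S$, and so that Fact~\ref{f0110} says exactly that $X\simeq 0$ if and only if $S=\emptyset$.

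First I would dispatch the two equivalences in the second sentence. For an arbitrary subset $S\subseteq\bbz$ one has $\inf S=\infty$ if and only if $S=\emptyset$, since any nonempty subset of $\bbz$ has infimum lying in $\bbz\cup\{-\infty\}$; symmetrically, $\sup S=-\infty$ if and only if $S=\emptyset$. Combining these with the translation $S=\emptyset\iff X\simeq 0$ immediately gives $\inf(X)=\infty\iff X\simeq 0\iff\sup(X)=-\infty$.

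Next, for the first sentence, I would assume $X\not\simeq 0$, so that $S\neq\emptyset$, and pick any $j\in S$. Then $\inf(X)=\inf S\leq j\leq\sup S=\sup(X)$, while $-\infty\leq\inf(X)$ and $\sup(X)\leq\infty$ hold by the conventions governing the extended reals; stringing these together yields $-\infty\leq\inf(X)\leq\sup(X)\leq\infty$. Finally, for the last sentence, if $M$ is a nonzero $R$-module viewed as a complex concentrated in degree $0$, then Fact~\ref{f0103} gives $\HH_0(M)\cong M\neq 0$ and $\HH_i(M)=0$ for $i\neq 0$, so $S=\{0\}$ and hence $\inf(M)=0=\sup(M)$.

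There is no real obstacle here; the only thing demanding any attention is bookkeeping with the $\pm\infty$ conventions and remembering to route the condition $X\simeq 0$ through Fact~\ref{f0110} (and, in the last part, through Fact~\ref{f0103}) rather than trying to argue directly from the complex $X$ itself.
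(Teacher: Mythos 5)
Your argument is correct and is the natural one: everything reduces to unwinding Definition~\ref{d0108} together with Facts~\ref{f0110} and~\ref{f0103}, exactly as you do. The paper records this as a Fact without proof, and your verification is precisely the routine bookkeeping the author leaves to the reader.
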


The next construction allows us to ``shift'' a given $R$-complex,
which is useful, for instance, when we want the nonzero homology modules
in nonnegative degrees.

\begin{defn} \label{d0107}
Let $X$ be an $R$-complex.
For each integer $i$, the $i$th \emph{suspension}
or \emph{shift} of $X$ is the complex
$\shift^iX$ given by
$(\shift^iX)_j=X_{j-i}$
and $\partial^{\shift^iX}_j=(-1)^i\partial^X_{j-i}$.
\end{defn}

\begin{fact} \label{f0107}
If $X$ is an $R$-complex, then $\shift^iX$ is obtained by shifting $X$
to the left by $i$ degrees and multiplying the differential by $(-1)^i$. 
In particular, if $M$ is an $R$-module, then $\shift^iM$ is 
a complex that is concentrated
in degree $i$.
It is straightforward to show that
$\HH_j(\shift^iX)\cong\HH_{j-i}(X)$, and hence
$\inf(\shift^iX)=\inf(X)+i$
and
$\sup(\shift^iX)=\sup(X)+i$.
\end{fact}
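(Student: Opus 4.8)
The plan is to unwind Definition~\ref{d0107} directly and verify each of the four assertions in turn; the single observation doing all the work is that multiplication by the unit $(-1)^i$ leaves kernels and images unchanged, so the sign twist in the shifted differential is invisible to homology.

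First I would handle the two descriptive statements. The relation $(\shift^iX)_j = X_{j-i}$ says exactly that the module sitting in homological degree $n$ of $X$ reappears in homological degree $n+i$ of $\shift^iX$, which is the claim that $\shift^iX$ is $X$ shifted to the left by $i$ degrees, and the relation $\partial^{\shift^iX}_j = (-1)^i\partial^X_{j-i}$ is the claim about multiplying the differential by $(-1)^i$. For the module case, when $M$ is regarded as the complex concentrated in degree $0$ (Fact~\ref{f0103}), the formula $(\shift^iM)_j = M_{j-i}$ gives $M$ when $j=i$ and $0$ otherwise, so $\shift^iM$ is concentrated in degree $i$.

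Next I would compute homology. By Definition~\ref{d0111}, $\HH_j(\shift^iX) = \ker(\partial^{\shift^iX}_j)/\im(\partial^{\shift^iX}_{j+1})$. Since $\partial^{\shift^iX}_j = (-1)^i\partial^X_{j-i}$ and $(-1)^i$ is a unit, we get $\ker(\partial^{\shift^iX}_j) = \ker(\partial^X_{j-i})$ and likewise $\im(\partial^{\shift^iX}_{j+1}) = \im(\partial^X_{j+1-i})$; the quotient of these two submodules of $X_{j-i}$ is exactly $\HH_{j-i}(X)$, which gives the natural isomorphism $\HH_j(\shift^iX) \cong \HH_{j-i}(X)$.

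Finally I would read off the supremum and infimum formulas from this homology computation: $\HH_j(\shift^iX)\neq 0$ precisely when $\HH_{j-i}(X)\neq 0$, so the set of degrees in which $\shift^iX$ has nonzero homology is the translate by $i$ of the corresponding set for $X$; applying $\sup$ and $\inf$ — with the usual conventions on $\emptyset$ recorded in Fact~\ref{f0108} covering the case $X\simeq 0$ — yields $\sup(\shift^iX) = \sup(X)+i$ and $\inf(\shift^iX) = \inf(X)+i$. There is no real obstacle in this argument; the only step deserving a moment's thought is the remark that twisting a differential by the unit $(-1)^i$ alters neither its kernel nor its image, hence leaves homology untouched.
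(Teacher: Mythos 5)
Your proof is correct and is exactly the straightforward unwinding of Definition~\ref{d0107} that the paper has in mind when it labels this a Fact and says the homology isomorphism is "straightforward to show." The key observation — that multiplying a differential by the unit $(-1)^i$ changes neither kernel nor image, hence leaves homology untouched — is precisely the right one, and the rest follows by bookkeeping.
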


For most of this investigation, we focus on $R$-complexes with only 
finitely many nonzero homology
modules, hence the next terminology. 

\begin{defn} \label{d0101}
An $R$-complex $X$ is \emph{bounded} if $X_i=0$ for $|i|\gg 0$.
It is \emph{homologically bounded below} if $\HH_i(X)=0$ for $i\ll 0$.
It is \emph{homologically bounded above} if $\HH_i(X)=0$ for $i\gg 0$.
It is \emph{homologically bounded} if $\HH_i(X)=0$ for $|i|\gg 0$.
It is \emph{homologically degreewise finite} if each homology module $\HH_i(X)$
is finitely generated.
It is \emph{homologically finite}
if the module $\HH(X)=\oplus_{i\in\bbz}\HH_i(X)$
is finitely generated.
\end{defn}

The next fact summarizes elementary translations of these definitions.

\begin{fact} \label{f0001}
An $R$-complex $X$ is homologically bounded below
if $\inf(X)>-\infty$.
It is homologically bounded above 
if $\sup(X)<\infty$.
Hence, it is homologically bounded
if $\inf(X)>-\infty$ and $\sup(X)<\infty$, that is, if it is homologically bounded both 
above and below.
The complex $X$ is homologically finite
if it
is homologically both degreewise finite and bounded.

Each of the properties defined in~\ref{d0101} is invariant under shift.
For instance, an $R$-complex $X$ is homologically finite if and only if
some (equivalently, every) shift $\shift^iX$ is homologically finite; see Fact~\ref{f0107}.
\end{fact}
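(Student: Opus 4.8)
The plan is to unwind the definitions in~\ref{d0101} and~\ref{d0108} and translate directly between the ``homological'' boundedness conditions and the invariants $\inf$ and $\sup$. First I would record the tautology that, for an $R$-complex $X$, the support $S_X=\{i\in\bbz\mid\HH_i(X)\neq 0\}$ is bounded below (allowing $S_X=\emptyset$) precisely when $\HH_i(X)=0$ for $i\ll 0$, and that $\inf(X)=\inf S_X$ by definition; so $\inf(X)>-\infty$ if and only if $X$ is homologically bounded below, with the convention $\inf\emptyset=\infty$ handling the case $X\simeq 0$. The statement about $\sup(X)$ and homological boundedness above is the mirror image, and the assertion about homological boundedness is then the conjunction of the two, since $\HH_i(X)=0$ for $|i|\gg 0$ holds exactly when $S_X$ is bounded both below and above.

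For the characterization of homological finiteness I would argue purely at the level of $R$-modules, applied to the graded module $\HH(X)=\oplus_{i\in\bbz}\HH_i(X)$. If $\HH(X)$ is finitely generated, then each $\HH_i(X)$, being a direct summand of $\HH(X)$, is itself finitely generated; moreover a finite generating set of $\HH(X)$ has finite total support among the summands, so $\HH_i(X)=0$ for all but finitely many $i$, i.e.\ $X$ is homologically bounded. Conversely, a finite direct sum of finitely generated modules is finitely generated, which gives the reverse implication; thus ``homologically finite'' is equivalent to ``homologically degreewise finite and homologically bounded.''

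For the shift invariance I would invoke Fact~\ref{f0107}, which supplies the isomorphisms $\HH_j(\shift^iX)\cong\HH_{j-i}(X)$ together with the identity $(\shift^iX)_j=X_{j-i}$ on the underlying modules. Consequently each of the index sets controlling the properties in~\ref{d0101} --- the support $\{j\mid X_j\neq 0\}$ of the underlying complex for ``bounded'', and $S_X$ for the homological conditions --- is merely translated by $i$ upon applying $\shift^i$, and translation preserves being bounded below, bounded above, bounded, or finite; the isomorphisms on homology also preserve finite generation of each $\HH_j$. Hence every property defined in~\ref{d0101}, in particular homological finiteness, holds for $X$ if and only if it holds for some, equivalently every, $\shift^iX$.

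There is essentially no obstacle here: the content is entirely bookkeeping with the definitions. The only points needing the slightest care are the edge case $X\simeq 0$, which is absorbed by the conventions $\inf\emptyset=\infty$ and $\sup\emptyset=-\infty$, and the elementary module-theoretic observation that a finitely generated module cannot be an infinite direct sum of nonzero submodules.
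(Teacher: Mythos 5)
Your proof is correct and matches the intent of the paper, which states this as a ``fact'' precisely because the content is nothing more than the definitional unwinding you carry out: $\inf$ and $\sup$ bound the homological support, a finitely generated graded module is exactly one that is degreewise finitely generated with finite support, and Fact~\ref{f0107} translates the support by $i$ under $\shift^i$. No gaps; the edge-case conventions and the direct-summand observation are exactly the points worth flagging, and you flag them.
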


For modules, many of these notions are trivial:

\begin{fact} \label{f0105}
An $R$-module $M$ is always homologically bounded as an $R$-complex.
It is homologically  finite as an $R$-complex
if and only if it is finitely generated. 
\end{fact}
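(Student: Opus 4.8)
The plan is to reduce both assertions to the elementary computation, recorded in Fact~\ref{f0103}, of the homology of $M$ when it is regarded as an object of $\catd(R)$. First I would invoke that fact: under the full embedding $\catm(R)\hookrightarrow\catd(R)$, the module $M$ is identified with the complex $0\to M\to 0$ concentrated in degree~$0$, so that $\HH_0(M)\cong M$ and $\HH_i(M)=0$ for every $i\neq 0$. Thus the only possibly nonzero homology module of $M$ lies in a single degree, and in particular $\HH_i(M)=0$ for $|i|\gg 0$; comparing with Definition~\ref{d0101}, this is exactly the statement that $M$ is homologically bounded, which is the first assertion.

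For the second assertion I would combine the same homology computation with Fact~\ref{f0001}, which states that a complex is homologically finite if and only if it is both homologically bounded and homologically degreewise finite. Since $M$ has just been shown to be homologically bounded, it is homologically finite precisely when it is homologically degreewise finite, i.e.\ precisely when each $\HH_i(M)$ is finitely generated; because $\HH_i(M)=0$ for $i\neq 0$ and $\HH_0(M)\cong M$, this is equivalent to the finite generation of $M$. Alternatively one can argue straight from Definition~\ref{d0101}: the module $\HH(M)=\bigoplus_{i\in\bbz}\HH_i(M)$ is isomorphic to $M$, so it is finitely generated if and only if $M$ is.

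No genuine obstacle arises in this argument. The only point requiring care is the bookkeeping among the three finiteness conditions of Definition~\ref{d0101} and the correct use of the identification $\HH(M)\cong M$ coming from Fact~\ref{f0103}; everything else is immediate.
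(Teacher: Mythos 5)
Your argument is correct and is the natural one; since the paper records Fact~\ref{f0105} without proof (it is part of the background material in Appendix~\ref{sec01}), the expected justification is exactly the reduction you give via Fact~\ref{f0103} and Definition~\ref{d0101}. The only small imprecision is that Fact~\ref{f0001} as written in the paper only asserts the implication ``degreewise finite and bounded $\Rightarrow$ homologically finite,'' not the biconditional you quote; but this does not affect your proof, since your alternative direct argument ($\HH(M)\cong M$, so $\HH(M)$ is finitely generated if and only if $M$ is) handles both directions cleanly and is, if anything, the preferable route.
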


As with modules, there are various useful types of resolutions of $R$-complexes.

\begin{defn} \label{d0103}
Let $X$ be an $R$-complex.
An \emph{injective resolution}\footnote{Note that our injective resolutions
are bounded above by definition. There are notions of 
injective (and projective) resolutions for unbounded
complexes, but we do not need them here. The interested reader should 
consult~\cite{avramov:hdouc} for information on these more general constructions.} 
of  $X$ is an
$R$-complex $J$ such that $X\simeq J$ in $\catd(R)$,
each $J_i$ is injective, and $J_i=0$ for $i\gg 0$.
The complex $X$ has \emph{finite injective dimension}
if it has an injective resolution $J$ such that $J_i=0$ for $i\ll 0$.
More specifically, the injective dimension of  $X$ is
$$\id_R(X)=\inf\{\sup\{i\in\bbz\mid J_{-i}\neq 0\}\mid
\text{$J$ is an injective resolution of $X$}\}.$$
Dually, a \emph{free resolution} of  $X$ is an
$R$-complex $F$ such that $F\simeq X$ in $\catd(R)$,
each $F_i$ is free, and $F_i=0$ for $i\ll 0$.
The complex $X$ has \emph{finite projective dimension}\footnote{Since
the ring $R$ is local, every projective $R$-module is free. For this reason,
we focus on free resolutions instead of projective ones. On the other hand, tradition dictates
that the corresponding homological dimension is the ``projective dimension''
instead of the possibly confusing (though, potentially liberating) ``free dimension''.}
if it has a free resolution $F$ such that $F_i=0$ for $i\gg 0$.
More specifically, the projective dimension of  $X$ is
$$\pd_R(X)=\inf\{\sup\{i\in\bbz\mid F_{i}\neq 0\}\mid
\text{$F$ is a free resolution of $X$}\}.$$
A free resolution $F$ of $X$ is \emph{minimal}\footnote{There is
also a notion of minimal injective resolutions of complexes, but 
it is slightly more complicated, and we do not need it here.}
if for each index $i$, the module $F_i$ is finitely generated
and $\im(\partial^F_i)\subseteq\m F_{i-1}$.
\end{defn}

For modules, the notions from~\ref{d0103} are the familiar ones.

\begin{fact} \label{f0109}
Let $M$ be an $R$-module. An injective resolution of $M$ as an $R$-module,
in the traditional sense of an exact sequence of the form
$$0\to M\to J_0\xra{\partial_0} J_{-1}\xra{\partial_{-1}}\cdots$$
where each $J_i$ is injective, gives rise to an injective resolution of 
$M$ as an $R$-complex:
$$0\to J_0\xra{\partial_0} J_{-1}\xra{\partial_{-1}}\cdots.$$
Conversely, every injective resolution of $M$ as an $R$-complex 
gives rise to an injective resolution of $M$ as an $R$-module,
though one has to work a little harder. 
Accordingly, the injective dimension of $M$ as an $R$-module
equals the injective dimension of $M$ as an $R$-complex.
Similar comments apply 
to free resolutions and projective dimension.
\end{fact}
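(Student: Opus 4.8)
The plan is to match the classical (module-theoretic) resolutions against the complex-theoretic ones of Definition~\ref{d0103} in each direction separately. For the classical-to-complex direction, given a classical injective resolution $0\to M\to J_0\to J_{-1}\to\cdots$ of the module $M$, I would set $J$ to be the $R$-complex $\cdots\to 0\to J_0\to J_{-1}\to\cdots$ concentrated in degrees $\leq 0$. The augmentation $M\to J_0$ is a morphism of complexes $M\to J$, and exactness of the classical resolution says precisely that this morphism induces an isomorphism on $\HH_0$ and that $\HH_i(J)=0$ for $i\neq 0$; that is, $M\to J$ is a quasiisomorphism, so $M\simeq J$ in $\catd(R)$. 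Since each $J_i$ is injective and $J_i=0$ for $i>0$, the complex $J$ is an injective resolution of $M$ as an $R$-complex. The argument for free resolutions is the dual: a classical free resolution $\cdots\to F_1\to F_0\to M\to 0$ becomes the complex $\cdots\to F_1\to F_0\to 0$ in degrees $\geq 0$, and exactness makes the augmentation $F\to M$ a quasiisomorphism.

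For the complex-to-classical direction --- the ``work a little harder'' part --- let $J$ be an injective resolution of $M$ as an $R$-complex, so $J$ is bounded above, every $J_i$ is injective, and $\HH_i(J)=0$ for $i\neq 0$ while $\HH_0(J)\cong M$. If $J$ has nonzero terms in positive degrees, I would trim them away one at a time by the standard splitting trick: writing $n=\sup\{i:J_i\neq 0\}\geq 1$, the vanishing $\HH_n(J)=0$ forces $\partial^J_n\colon J_n\to J_{n-1}$ to be injective, and since $J_n$ is injective this map splits, so $J$ decomposes as the direct sum of the contractible complex $0\to J_n\xra{\cong}\partial^J_n(J_n)\to 0$ (in degrees $n,n-1$) and a bounded-above complex of injectives with the same homology but top degree $n-1$, whose terms in negative degrees are unchanged. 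Iterating finitely often yields an injective resolution $J'$ with $J'_i=0$ for $i>0$, and then $\HH_0(J')\cong M$ means exactly that $0\to M\to J'_0\to J'_{-1}\to\cdots$ is an exact sequence of injective modules, i.e.\ a classical injective resolution. The dual argument produces a classical free resolution from a complex one: a complex free resolution is bounded below, and one splits off trivial summands from the bottom using that a surjection onto a free module (all projectives are free here, as $R$ is local) splits, eventually reaching a complex $\cdots\to F'_1\to F'_0\to 0$ in nonnegative degrees with $\HH_0\cong M$, which unwinds to the exactness of $\cdots\to F'_1\to F'_0\to M\to 0$.

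It then remains to compare the two injective dimensions (and, dually, the two projective dimensions). One inequality is immediate: a classical resolution viewed as a complex is a complex resolution occupying the same degrees, so $\id$ computed via complexes is at most $\id$ computed via modules. For the reverse inequality I would observe that the trimming procedure above never alters the terms of $J$ in negative degrees, so if a complex injective resolution has $J_i=0$ for $i<-m$ then the extracted classical resolution also satisfies $J'_i=0$ for $i<-m$; taking the infimum over all complex resolutions gives that $\id_R(M)$ as a module is at most $\id_R(M)$ as a complex. Interchanging the roles of positive and negative degrees gives the corresponding statement for projective dimension. The only genuine obstacle in all of this is the complex-to-classical passage: a bounded-above complex of injectives (resp.\ bounded-below complex of free modules) need not already be concentrated in the degrees a classical resolution allows, and one must verify that trimming it down preserves both the homology and the relevant length --- which is exactly what the splitting argument accomplishes.
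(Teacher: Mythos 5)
Your proof is correct and spells out exactly the argument the paper gestures at when it calls the converse direction ``a little harder'': the classical-to-complex direction is immediate, and the complex-to-classical direction peels off split contractible two-term summands from the top (resp.\ bottom) of a bounded-above injective (resp.\ bounded-below free) resolution, using that an injection out of an injective module (resp.\ a surjection onto a projective, hence free, module over the local ring $R$) splits. The bookkeeping observation that this trimming leaves the terms in negative (resp.\ positive) degrees untouched is precisely what gives the equality of the two homological dimensions.
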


The next fact summarizes basic properties about existence of these resolutions.

\begin{fact} \label{f0101}
Let $X$ be an $R$-complex.
Then  $X$ has a free resolution if and only if it is homologically bounded below;
when these conditions are met, 
it has a free resolution $F$ such that $F_i=0$ for all $i<\inf(X)$;
see~\cite[(2.11.3.4)]{avramov:dgha}
or~\cite[(6.6.i)]{felix:rht} or~\cite[(2.6.P)]{foxby:hacr}. 
Dually, 
the complex $X$ has an injective resolution if and only if it is homologically bounded above;
when these conditions are satisfied, 
it has an injective resolution $J$ such that $J_i=0$ for all $i>\sup(X)$.
If $X$ is homologically both degreewise finite
and bounded below, then it has a minimal free resolution $F$,
and one has $F_i=0$ for all $i<\inf(X)$;
see~\cite[Prop.\ 2]{apassov:cdgm}
or~\cite[(2.12.5.2.1)]{avramov:dgha}. 
\end{fact}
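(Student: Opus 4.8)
The plan is to prove the three claims in turn, each by reducing via a soft truncation to a degreewise construction that mimics the classical module case. \emph{Free resolutions.} The forward implication is immediate: a free resolution $F$ of $X$ is bounded below by definition, so $\HH_i(X)\cong\HH_i(F)=0$ for $i\ll 0$. For the converse, discard the trivial case $X\simeq 0$ (take $F=0$) and put $s=\inf(X)$. Since $\HH_i(X)=0$ for $i<s$, the inclusion of the soft truncation---the subcomplex agreeing with $X$ in degrees $>s$, equal to $\Ker(\partial^X_s)$ in degree $s$, and $0$ below---is a quasiisomorphism, so we may assume $X_i=0$ for $i<s$. Now build $F$ with $F_i=0$ for $i<s$ together with a quasiisomorphism $\phi\colon F\res X$ by induction up the degrees: take $F_s$ free surjecting onto $X_s$, and, given $F_s,\dots,F_n$ with the partial chain map $\phi$ inducing bijections on $\HH_i$ for $i<n$ and a surjection on $\HH_n$, let $F_{n+1}$ be free surjecting onto the module of pairs $\{(f,x)\in F_n\oplus X_{n+1}:\ \partial^F_n(f)=0,\ \phi_n(f)=\partial^X_{n+1}(x)\}$, with $\partial^F_{n+1}$ and $\phi_{n+1}$ the induced coordinate maps. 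A routine diagram chase shows the chain-map identities persist, $\HH_n(\phi)$ becomes bijective, and $\HH_{n+1}(\phi)$ becomes surjective; passing to the union over $n$ gives the desired $F$.

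\emph{Injective resolutions.} This is the dual statement, proved by the dual construction inside $\catm(R)$: the forward direction uses the convention that injective resolutions are bounded above, and for the converse one soft-truncates $X$ from above at $\sup(X)$ (replacing $X_{\sup(X)}$ by $\coker(\partial^X_{\sup(X)+1})$ and killing everything higher) and builds $J$ one degree at a time going \emph{down}, replacing ``free module surjecting onto $-$'' by ``$-$ embeds into an injective module'' and the pullback-type module of pairs by the analogous pushout; the same chase applies, and one obtains $J_i=0$ for $i>\sup(X)$.

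\emph{Minimal free resolutions.} Assume in addition that $X$ is homologically degreewise finite. After the truncation above, run the free-resolution construction but at each stage keep the free module finitely generated: the submodules of the (finitely generated) $F_n$ and the homology modules of $X$ that must be hit at the $(n+1)$st stage are finitely generated because $R$ is noetherian and $X$ is homologically degreewise finite, so one only needs to adjoin finitely many basis elements at each step. Choosing these generators minimally and invoking Nakayama forces $\im(\partial^F_i)\subseteq\m F_{i-1}$, giving a minimal $F$ with $F_i=0$ for $i<\inf(X)$. Alternatively, first produce any degreewise finite free resolution and then peel off a maximal trivial direct summand---a sum of complexes $0\to R\xra{1}R\to 0$---to reach a minimal one, using that a bounded-below free complex with $\im(\partial^F)\not\subseteq\m F$ has a unit entry in some differential and hence splits off such a summand.

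The step I expect to be the real work is the inductive construction in the first part---carrying along the chain-map identities together with the ``bijective below the top degree, surjective at the top degree'' condition on the induced homology maps---since once that machine is built, the truncations, the dualization, the noetherian finiteness, and the Nakayama argument are all routine.
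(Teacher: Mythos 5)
The paper does not prove this Fact but simply cites the literature, so your argument is necessarily an independent one. Your treatment of the first two assertions --- existence of free, respectively injective, resolutions via soft truncation followed by a degreewise pullback (respectively pushout) construction --- is correct and is the standard argument.

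The minimal-resolution part, however, has a genuine gap. First, $X_s=\Ker(\partial^X_s)$ need not be finitely generated (only $H_s(X)$ is), so ``take $F_s$ free surjecting onto $X_s$'' cannot be run with finitely generated modules; you must instead only require the composite $F_s\to X_s\to H_s(X)$ to be surjective, and in higher degrees arrange to hit $H_{n+1}(X)$ rather than all of $X_{n+1}$. Second, and more seriously, even after that fix ``choosing these generators minimally and invoking Nakayama'' does not force $\im(\partial^F_i)\subseteq\m F_{i-1}$: the submodule $\im(\partial^F_{n+1})=\{f\in\Ker(\partial^F_n)\mid\phi_n(f)\in\im(\partial^X_{n+1})\}$ of $F_n$ is determined the moment $F_n$ and $\phi_n$ are chosen, and picking $F_{n+1}$ minimally over the pullback does nothing to push it into $\m F_n$. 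Concretely, take $R$ local with $\m\neq 0$ and $X$ the complex $0\to R\to R^2\to 0$ in degrees $1,0$ with $\partial^X_1(a)=(a,0)$; here $H_1(X)=0$, $H_0(X)\cong R$, $\inf(X)=0$, and the soft truncation is $X$ itself. Your construction sets $F_0=R^2$, $\phi_0=\mathrm{id}$, so the pullback $P_0\cong R$ forces $F_1=R$ with $\partial^F_1(1)=(1,0)\notin\m F_0$ --- not minimal (the minimal resolution is $0\to R\to 0$). Your ``alternatively'' is the route that works and is what the cited sources do: build a degreewise finite free resolution (using the $H_s(X)$ fix above) and split off trivial complexes $0\to R\xra{1}R\to 0$. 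But ``peel off a maximal trivial direct summand'' still needs an argument, since in an unbounded complex infinitely many splittings may be required; the clean version is a degree-by-degree induction, splitting in degrees $(s+1,s)$, then $(s+2,s+1)$, and so on, noting that each split in degrees $(n+1,n)$ strictly drops the finite number $\rank(F_n)$ and, because it replaces $\partial^F_n$ by a restriction of itself, preserves the already-achieved condition $\im(\partial^F_j)\subseteq\m F_{j-1}$ for $j\leq n$.
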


These invariants interact with the shift operator as one might expect:

\begin{fact} \label{f0101'}
It is straightforward to show that,
if $X$ is an $R$-complex and $i$ is an integer, then
$\id_R(\shift^iX)=\id_R(X)-i$ and
$\pd_R(\shift^iX)=\pd_R(X)+i$.
\end{fact}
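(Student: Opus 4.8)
The plan is to argue directly from the definitions of injective and projective dimension in terms of resolutions (Definition~\ref{d0103}) together with the elementary behaviour of the suspension functor on complexes (Definition~\ref{d0107} and Fact~\ref{f0107}). The one observation that makes everything go is that suspension sets up a bijection between resolutions of $X$ and resolutions of $\shift^iX$. Concretely: if $J$ is an injective resolution of $X$, then $\shift^iJ$ is an injective resolution of $\shift^iX$, because $\shift^i$ is an additive functor on $\catc(R)$ carrying quasiisomorphisms to quasiisomorphisms (so $X\simeq J$ in $\catd(R)$ gives $\shift^iX\simeq\shift^iJ$), each module $(\shift^iJ)_n=J_{n-i}$ is injective, and $(\shift^iJ)_n=J_{n-i}=0$ once $n\gg 0$. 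Applying $\shift^{-i}$ and using $\shift^{-i}\shift^i=\mathrm{id}_{\catc(R)}$ shows that every injective resolution of $\shift^iX$ arises this way from a unique injective resolution of $X$, so $J\mapsto\shift^iJ$ is a bijection. The identical argument, with ``injective'' replaced by ``free'' and the condition $J_n=0$ for $n\gg 0$ replaced by $F_n=0$ for $n\ll 0$, gives a bijection $F\mapsto\shift^iF$ between the free resolutions of $X$ and those of $\shift^iX$.

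The rest is index bookkeeping. For an injective resolution $J$ of $X$, the substitution $m=n+i$ gives
$$\sup\{n\in\bbz\mid(\shift^iJ)_{-n}\neq 0\}=\sup\{n\in\bbz\mid J_{-n-i}\neq 0\}=\sup\{m\in\bbz\mid J_{-m}\neq 0\}-i.$$
Taking the infimum over all injective resolutions (which by the bijection above is the same whether one ranges over resolutions of $X$ or of $\shift^iX$) yields $\id_R(\shift^iX)=\id_R(X)-i$; the conventions $\inf\emptyset=\infty$, $\sup\emptyset=-\infty$ from Definition~\ref{d0108} handle the degenerate cases, and the existence clause of Fact~\ref{f0101} ensures the relevant families of resolutions are nonempty exactly when $X$ (equivalently $\shift^iX$) is homologically bounded above. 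Dually, for a free resolution $F$ of $X$, the substitution $m=n-i$ gives
$$\sup\{n\in\bbz\mid(\shift^iF)_n\neq 0\}=\sup\{n\in\bbz\mid F_{n-i}\neq 0\}=\sup\{m\in\bbz\mid F_m\neq 0\}+i,$$
and taking the infimum over all free resolutions gives $\pd_R(\shift^iX)=\pd_R(X)+i$.

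I do not anticipate a real obstacle: as the statement of the Fact already advertises, this is a matter of unwinding definitions. The only point deserving care is the claim that $J\mapsto\shift^iJ$ is a \emph{bijection} on resolutions — this is what legitimizes passing the infimum back and forth between $X$ and $\shift^iX$ — so I would make the functoriality of $\shift^i$ and the identity $\shift^{-i}\shift^i=\mathrm{id}$ explicit rather than leave them tacit. The sign $(-1)^i$ on the differential of $\shift^iX$ plays no role here, since it changes neither which modules occur in a complex nor its homology (up to the natural isomorphism $\HH_j(\shift^iX)\cong\HH_{j-i}(X)$ of Fact~\ref{f0107}).
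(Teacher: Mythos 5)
Your argument is correct, and since the paper offers no proof (the Fact is labelled ``straightforward''), it takes exactly the route the author intends: unwind Definition~\ref{d0103} and Definition~\ref{d0107}, observe that $J\mapsto\shift^iJ$ (resp.\ $F\mapsto\shift^iF$) is a bijection between injective (resp.\ free) resolutions of $X$ and of $\shift^iX$, and reindex the suprema. Your care about verifying bijectivity via $\shift^{-i}\shift^i=\mathrm{id}$ (which is what licenses replacing the infimum over resolutions of $\shift^iX$ by the infimum over resolutions of $X$) and your note that the sign $(-1)^i$ on the differential is immaterial are exactly the right points to make explicit.
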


The next constructions extend Hom and tensor product to the category $\catc(R)$.

\begin{defn} \label{d0105}
Let $X$ and $Y$ be $R$-complexes. The 
\emph{tensor product complex} $X\otimes_RY$
and
\emph{homomorphism complex}
$\Hom_R(X,Y)$ 
are defined by the formulas
\begin{align*}
(X\otimes_RY)_i&=\coprod_{j\in\bbz}X_j\otimes_RY_{i-j} \\
\partial^{X\otimes_RY}_i(\{x_j\otimes y_{i-j}\}_{j\in\bbz})
&=\{\partial^X_j(x_j)\otimes y_{i-j}+(-1)^{j-1}x_{j-1}\otimes\partial^Y_{i-j+1}(y_{i-j+1})\}\\
\Hom_R(X,Y)_i&=\prod_{j\in\bbz}\hom_R(X_j,Y_{j+i}) \\
\partial^{\hom_R(X,Y)}_i(\{\phi_j\}_{j\in\bbz})
&=\{\partial^Y_{j+i}\phi_j-(-1)^i\phi_{j-1}\partial^X_{j}\}.
\end{align*}
\end{defn}

When one of the complexes in this definition is a module, 
the resulting complexes have the form one should expect:

\begin{fact} \label{f0104}
Let $X$ be an $R$-complex and $M$ an $R$-module. 
The complexes 
$X\otimes_RM$, $M\otimes_RX$ and $\hom_R(M,X)$
are exactly the complexes you would expect, namely
\begin{align*}
X\otimes_R M=
&\cdots\xra{\partial^X_{i+1}\otimes M}X_{i}\otimes M
\xra{\partial^X_{i}\otimes M}X_{i-1}\otimes M
\xra{\partial^X_{i-1}\otimes M}\cdots \\
M\otimes_R X=
&\cdots\xra{M\otimes\partial^X_{i+1}}M\otimes X_{i}
\xra{M\otimes\partial^X_{i}}M\otimes X_{i-1}
\xra{M\otimes\partial^X_{i-1}}\cdots \\
\hom_R(M,X) =\\
&\hspace{-1.5cm}\cdots\xra{\hom(M,\partial^X_{i+1})} \hom (M,X_{i})
\xra{\hom(M,\partial^X_{i})}  \hom (M,X_{i-1})\xra{\hom(M,\partial^X_{i-1})} 
\cdots.
\intertext{On the other hand, the complex $\hom (X,M)$ 
has the form you would expect, but the differentials differ by a sign: }
\hom_R (X,M)=\\
&\hspace{-1.5cm}\cdots\xra{(-1)^i\hom (\partial^X_{i},M)} \hom (X_{i},M)
\xra{(-1)^{i+1}\hom (\partial^X_{i+1},M)}  \hom (X_{i-1},M)\cdots.
\end{align*}
Note that this sign difference does not change the homology since it 
changes neither the kernels nor the images of the respective maps.
\end{fact}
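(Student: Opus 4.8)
The plan is to prove Fact~\ref{f0104} by specializing the formulas of Definition~\ref{d0105} to the situation where one of the two complexes is the $R$-module $M$, regarded via Fact~\ref{f0103} as the complex with $M_0=M$, $M_j=0$ for $j\neq 0$, and zero differential. The whole argument is a matter of seeing which summands and which terms in the differential formulas survive, and tracking the accompanying signs; there is no conceptual content, only bookkeeping.

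Concretely, I would first treat $X\otimes_RM$. In $(X\otimes_RY)_i=\coprod_{j}X_j\otimes_RY_{i-j}$, setting $Y=M$ kills every summand except $j=i$, giving $(X\otimes_RM)_i=X_i\otimes_RM$. In the differential $\partial^{X\otimes_RY}_i(\{x_j\otimes y_{i-j}\})=\{\partial^X_j(x_j)\otimes y_{i-j}+(-1)^{j-1}x_{j-1}\otimes\partial^Y_{i-j+1}(y_{i-j+1})\}$, the second term vanishes since $\partial^M=0$, and in the first term only $j=i$ contributes, leaving $\partial^X_i(x_i)\otimes y$; thus the differential is $\partial^X_i\otimes M$, with no sign. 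The cases $M\otimes_RX$ and $\hom_R(M,X)$ are the same bookkeeping: in $\Hom_R(X,Y)_i=\prod_j\hom_R(X_j,Y_{j+i})$ with $X$ replaced by $M$ only $j=0$ survives, giving $\hom_R(M,X_i)$, and again the $\partial^M$-term in the differential formula drops out and the one surviving index makes the residual sign trivial. I would write the computation out once, for $X\otimes_RM$, and remark that the other two are verbatim.

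The one place a sign genuinely appears is $\hom_R(X,M)$: here $\Hom_R(X,Y)_i=\prod_j\hom_R(X_j,Y_{j+i})$ with $Y=M$ forces $j=-i$, so $\hom_R(X,M)_i=\hom_R(X_{-i},M)$, i.e.\ the terms are the expected $\hom_R(X_n,M)$. In $\partial^{\hom_R(X,Y)}_i(\{\phi_j\})=\{\partial^Y_{j+i}\phi_j-(-1)^i\phi_{j-1}\partial^X_j\}$ the first term dies, and after identifying the single surviving component (the one landing in $\hom_R(X_{1-i},M)$) one is left with $-(-1)^i\,\phi\circ\partial^X_{1-i}$; comparing this with the naive contravariant differential $\hom_R(\partial^X_{\bullet},M)$ shows the two agree up to the factor $(-1)^{i+1}$, which is exactly the asserted sign discrepancy. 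The only mildly delicate point in the whole argument is keeping the indexing straight here — that $\hom_R(X,M)_i=\hom_R(X_{-i},M)$ and that the differential lowers homological degree — so I would fix conventions before running it; this is the main obstacle, and it is purely cosmetic, namely displaying the answer with internally consistent indices. Finally, the closing sentence is a one-line observation: replacing $\partial_i$ by $(-1)^i\partial_i$ changes neither $\ker\partial_i$ nor $\im\partial_{i+1}$ (each is only rescaled by a unit of $\bbz$), so $\HH_i=\ker\partial_i/\im\partial_{i+1}$ is unaffected, and the sign is homologically invisible.
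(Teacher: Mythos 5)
Your computation is correct, and the approach — specializing the formulas of Definition~\ref{d0105} to the case where one factor is a module concentrated in degree $0$ and watching which summands and sign factors survive — is the only sensible one; the paper states this as a Fact without proof, so there is nothing to compare it against. Your index-tracking for $\hom_R(X,M)$ is right: $\hom_R(X,M)_i=\hom_R(X_{-i},M)$, the surviving component of the differential is in slot $j=1-i$, and the coefficient $-(-1)^i=(-1)^{i+1}$ is exactly what survives from the term $-(-1)^i\phi_{j-1}\partial^X_j$.

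One thing worth noting, precisely because your bookkeeping is careful: the paper's displayed formula for $\hom_R(X,M)$ is not internally consistent. It shows
$\hom(X_i,M)\xra{(-1)^{i+1}\hom(\partial^X_{i+1},M)}\hom(X_{i-1},M)$,
but $\hom(\partial^X_{i+1},M)$ is a map $\hom(X_i,M)\to\hom(X_{i+1},M)$, not $\hom(X_{i-1},M)$; since the homological degree of $\hom(X_n,M)$ is $-n$, the subscripts of $X$ must increase as one moves rightward along the differential. The arrow label and the sign $(-1)^{i+1}$ are correct, but the target should read $\hom(X_{i+1},M)$. Your derivation gives the right answer — at homological degree $m$ the differential is $(-1)^{m+1}\hom(\partial^X_{1-m},M)$, which, with $m=-i$, reproduces the sign $(-1)^{i+1}$ on the map $\hom(X_i,M)\to\hom(X_{i+1},M)$ — so your proposal in fact implicitly corrects a typo in the statement you were asked to prove.
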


Here are some standard isomorphisms we shall need.

\begin{fact} \label{f3112}
Let $X$, $Y$ and $Z$ be  $R$-complexes.
The following natural isomorphisms are straightforward to verify,
using the counterparts for modules in the first five, and using the definition in the last:
\begin{align}
\hom_R(R,X)
&\cong X \label{f3112b} \tag{cancellation}\\
X\otimes_RY
&\cong Y\otimes_RX  \label{f3112c} \tag{commutativity}\\
\hom_R(X\oplus Y,Z)
&\cong\hom_R(X,Z)\oplus\hom_R(Y,Z) \tag{additivity} \\
\hom_R(X,Y\oplus Z)
&\cong\hom_R(X,Z)\oplus\hom_R(X,Y) \tag{additivity} \\
\hom_R(X\otimes_RY,Z)
&\cong \hom_R(X,\hom_R(Y,Z))  \label{f3112e} \tag{adjointness} \\
\hom_R(\shift^iX,\shift^jY)
&\cong\shift^{j-i}\hom_R(X,Y). \tag{shift}
\intertext{Let $S$ be a flat $R$-algebra. If each $R$-module $X_i$ is
finitely generated and $X_i=0$ for $i\ll 0$, then}
\hom_S(S\otimes_RX,S\otimes_RY)
&\cong S\otimes_R\hom_R(X,Y). \tag{base-change}
\end{align}
\end{fact}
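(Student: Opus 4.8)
The plan is to establish each isomorphism one degree at a time, invoking the familiar module-theoretic statement in every degree, and then to check that the resulting family of $R$-module homomorphisms is compatible with the differentials from Definition~\ref{d0105}. Since each constituent module map is plainly natural and bijective, the only real content lies in the compatibility with differentials, and the only place that requires genuine thought is bookkeeping with the Koszul signs $(-1)^{j-1}$, $(-1)^i$, etc.\ that appear in the tensor and Hom complexes. So I would first dispatch the sign-free cases and then treat the two sign-sensitive isomorphisms, adjointness and shift, with a little care.

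For the cancellation isomorphism \eqref{f3112b}, the module maps $\hom_R(R,X_j)\to X_j$ given by $\phi\mapsto\phi(1)$ assemble into a degree-$0$ map; because $\partial^R=0$, the differential of $\hom_R(R,X)$ collapses to composition with $\partial^X$, so this map is a morphism of complexes, and it is bijective in each degree. For commutativity \eqref{f3112c}, the module isomorphisms $X_j\otimes_RY_{i-j}\cong Y_{i-j}\otimes_RX_j$, suitably reindexed, identify $(X\otimes_RY)_i$ with $(Y\otimes_RX)_i$; a brief check shows the two terms of the differential correspond under this identification, the relevant signs matching as the definition dictates. The two additivity isomorphisms are immediate from the module versions applied degreewise, since the (finite) direct sums and the products in Definition~\ref{d0105} are formed componentwise and the differentials act componentwise.

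Adjointness \eqref{f3112e} is the main computation. In each total degree, Hom--tensor adjointness for modules together with a reindexing of the index sets produces a bijection between $\prod_j\hom_R((X\otimes_RY)_j,Z_{j+i})$ and $\prod_\ell\hom_R(X_\ell,\hom_R(Y,Z)_{\ell+i})$; the substantive point is to verify that the sign $(-1)^i$ carried by $\partial^{\hom_R(X\otimes_RY,Z)}$ is reproduced under this bijection by the nested differential of $\hom_R(X,\hom_R(Y,Z))$, which is a direct calculation from the two displayed formulas in Definition~\ref{d0105}. The shift isomorphism is cleanest straight from the definitions: since $(\shift^iX)_\ell=X_{\ell-i}$ and $(\shift^jY)_\ell=Y_{\ell-j}$, we get $\hom_R(\shift^iX,\shift^jY)_n=\prod_\ell\hom_R(X_{\ell-i},Y_{\ell+n-j})$, which after the substitution $\ell\mapsto\ell+i$ is exactly $\hom_R(X,Y)_{n-(j-i)}=(\shift^{j-i}\hom_R(X,Y))_n$; the signs $(-1)^i$ and $(-1)^j$ coming from the shifted differentials combine with the $(-1)^n$ in the Hom-differential to yield exactly the $(-1)^{j-i}$ that $\partial^{\shift^{j-i}\hom_R(X,Y)}$ carries.

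Finally, for base-change I would apply the module-level isomorphism $S\otimes_R\hom_R(X_j,Y_{j+i})\cong\hom_S(S\otimes_RX_j,S\otimes_RY_{j+i})$ in each bidegree --- valid because $X_j$ is finitely generated (hence finitely presented, as $R$ is noetherian) and $S$ is flat --- and then assemble these over $j$. The hypotheses that $X$ is degreewise finitely generated and bounded below are exactly what is needed to pass $S\otimes_R(-)$ through the products $\prod_j$ of Definition~\ref{d0105} defining $\hom_R(X,Y)$ and to match them with the corresponding products for $\hom_S(S\otimes_RX,S\otimes_RY)$; one then checks, as in the previous cases, that the resulting degree-$0$ map is a morphism of $S$-complexes. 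I expect the two places needing actual care to be the sign verification in the adjointness step and, in the base-change step, confirming that the stated finiteness and boundedness hypotheses genuinely permit commuting $S\otimes_R(-)$ past the relevant products; everything else amounts to routine degreewise transcription of the module facts.
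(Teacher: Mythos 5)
Your degreewise--plus--sign--check strategy is exactly the ``straightforward to verify using the module counterparts (and, for shift, the definition)'' verification the paper has in mind, so you are on the same page as the author. Two of the details deserve a warning, though, because as phrased they are not quite correct.

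First, on the sign-sensitive isomorphisms. For commutativity, the degreewise module isomorphism $X_j\otimes_R Y_{i-j}\cong Y_{i-j}\otimes_R X_j$ that you invoke is the plain twist $x\otimes y\mapsto y\otimes x$, and this does \emph{not} commute with the differentials of Definition~\ref{d0105}; the chain isomorphism is $x\otimes y\mapsto(-1)^{|x||y|}\,y\otimes x$, with a Koszul sign that is \emph{not} present in the module-level statement. Similarly, for shift your reindexing $\ell\mapsto\ell+i$ is correct, but the identity-on-components map $\sigma$ obtained this way satisfies $\sigma\circ\partial=(-1)^{i}\,\partial\circ\sigma$ under the paper's conventions (both terms of the Hom differential come out off by the same factor $(-1)^i$), so $\sigma$ is not yet a morphism of complexes; one must insert an alternating sign, e.g.\ $(-1)^{in}$ in degree $n$, before the isomorphism lands in $\shift^{j-i}\hom_R(X,Y)$. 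Your sentence ``the signs $(-1)^i$ and $(-1)^j$ \ldots combine with the $(-1)^n$ \ldots to yield exactly $(-1)^{j-i}$'' glosses over this and, read literally, claims the naive map already works, which it does not.

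The more substantive issue is base-change. You assert that the hypotheses on $X$ are ``exactly what is needed to pass $S\otimes_R(-)$ through the products $\prod_j$.'' They are not. The finite generation of each $X_j$ (over a noetherian $R$, hence finite presentation) together with flatness of $S$ gives the bidegreewise isomorphism $S\otimes_R\hom_R(X_j,Y_{j+i})\cong\hom_S(S\otimes_R X_j,S\otimes_R Y_{j+i})$, and that much is fine. But $\hom_R(X,Y)_i=\prod_j\hom_R(X_j,Y_{j+i})$ is still an \emph{infinite} product whenever $X$ is unbounded above and $Y$ is unbounded, and tensoring with a flat algebra does not commute with infinite products: already the natural map $\bbq\otimes_{\bbz}\prod_{n\geq 0}\bbz\to\prod_{n\geq 0}\bbq$ fails to be surjective. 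So boundedness below of $X$ does \emph{not} by itself justify commuting $S\otimes_R(-)$ past the product. What makes the argument work in practice is that the product is actually finite in each degree, for instance because $Y$ is bounded above (as it will be when $Y$ is an injective resolution); alternatively one proves the derived-category statement~\ref{f4112} directly by choosing suitably bounded representatives. The paper's own chain-level statement is silent about this, so you are inheriting rather than creating the gap, but your claim that the stated hypotheses are ``exactly what is needed'' is false and you should either add the missing boundedness or explain why the products are finite in your intended application.
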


Bounded complexes yield bounded
homomorphism and tensor product complexes. More specifically, the next
fact follows straight from the definitions.

\begin{fact} \label{f0112}
Let $X$ and $Y$ be  $R$-complexes.
If $X_i=0=Y_j$ for all $i< m$ and all $j< n$, then
$(X\otimes_RY)_i=0$ for all $i< m+n$.
If $X_i=0=Y_j$ for all $i< m$ and all $j> n$, then
$\hom_R(X,Y)_i=0$ for all $i> n-m$.
\end{fact}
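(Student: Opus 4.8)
The plan is to unwind Definition~\ref{d0105} degree by degree and simply track which summands (respectively, factors) can fail to vanish under the stated hypotheses. No homological input is needed here; everything follows from the explicit formulas for the modules $(X\otimes_RY)_i$ and $\hom_R(X,Y)_i$ together with the trivial fact that a direct sum or direct product of zero modules is zero.

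First I would treat the tensor product. Fix an index $i$ and recall that $(X\otimes_RY)_i=\coprod_{j\in\bbz}X_j\otimes_RY_{i-j}$. A summand $X_j\otimes_RY_{i-j}$ can be nonzero only if both $X_j\neq 0$ and $Y_{i-j}\neq 0$; by hypothesis this forces $j\geq m$ and $i-j\geq n$, and adding these gives $i\geq m+n$. Hence, whenever $i<m+n$, every summand is zero, so $(X\otimes_RY)_i=0$.

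Next I would treat the homomorphism complex in the same spirit. Fix $i$ and recall that $\hom_R(X,Y)_i=\prod_{j\in\bbz}\hom_R(X_j,Y_{j+i})$. A factor $\hom_R(X_j,Y_{j+i})$ can be nonzero only if $X_j\neq 0$ and $Y_{j+i}\neq 0$; by hypothesis this forces $j\geq m$ and $j+i\leq n$, so $i\leq n-j\leq n-m$. Hence, whenever $i>n-m$, every factor is zero, so $\hom_R(X,Y)_i=0$.

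I do not expect a genuine obstacle. The only thing to be careful about is the index bookkeeping, especially for the $\hom$ complex, where the target is indexed by $j+i$ rather than by $i-j$, so the vanishing of $Y$ in high degrees (not low degrees) is what gets used and the resulting inequality on $i$ points the other way. Once the indices are set up correctly, both assertions are immediate.
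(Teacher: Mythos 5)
Your argument is correct and is exactly the direct verification the paper has in mind: it states that this fact "follows straight from the definitions" and gives no further proof, which is precisely your degree-by-degree unwinding of Definition~\ref{d0105}. The index bookkeeping for the $\hom$ complex is handled correctly.
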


Here is the notation for derived functors in 
the derived category
$\catd(R)$.

\begin{notation} \label{d0106}
Let $X$ and $Y$ be $R$-complexes. The 
left-derived tensor product
and right-derived homomorphism complexes in $\catd(R)$ are denoted
$X\lotimes_RY$ and $\rhom_R(X,Y)$.
\end{notation}

The complexes $X\lotimes_RY$ and $\rhom_R(X,Y)$ are computed
using the same rules as for computing Tor and Ext of modules:

\begin{fact} \label{f0106}
Let $X$ and $Y$ be $R$-complexes. 
If $F$ is a free resolution of $X$
and $G$ is a free  resolution of $Y$,
then
$$X\lotimes_RY\simeq F\otimes_RY\simeq F\otimes_RG\simeq X\otimes_RG.$$
If $F$ is a free resolution of $X$
and $I$ is an injective resolution of $Y$,
then
$$\rhom_R(X,Y)\simeq \hom_R(F,Y)\simeq\hom_R(F,I)\simeq\hom_R(X,I).$$
It follows that, if $M$ and $N$ are $R$-modules, then
$\tor_i^R(M,N)\cong\HH_{i}(M\lotimes_RN)$
and $\ext^i_R(M,N)\cong\HH_{-i}(\rhom_R(M,N))$
for every integer $i$.
\end{fact}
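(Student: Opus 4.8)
The plan is to reduce both displayed strings of quasiisomorphisms to a single technical input: a bounded-below complex of free (equivalently, projective) $R$-modules is \emph{K-flat} and \emph{K-projective}, and a bounded-above complex of injective $R$-modules is \emph{K-injective}. Concretely, if $F$ is a bounded-below complex of free modules, then the functors $-\otimes_RF$ and $\hom_R(F,-)$ carry quasiisomorphisms to quasiisomorphisms, and if $I$ is a bounded-above complex of injective modules, then $\hom_R(-,I)$ does likewise. Granting this, and recalling that in general $X\lotimes_RY$ and $\rhom_R(X,Y)$ are computed by replacing an argument by a K-flat, resp.\ K-injective, resolution (such resolutions exist under the present boundedness hypotheses by Fact~\ref{f0101}), both displays follow in three lines each. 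For the first: applying $F\otimes_R-$ to $G\res Y$ gives $F\otimes_RG\res F\otimes_RY$, and applying $-\otimes_RG$ to $F\res X$ gives $F\otimes_RG\res X\otimes_RG$ since $G$ is also a bounded-below complex of flats; the common term $F\otimes_RG$ represents $X\lotimes_RY$. Dually, for the second: applying $\hom_R(F,-)$ to $Y\res I$ gives $\hom_R(F,Y)\res\hom_R(F,I)$, applying $\hom_R(-,I)$ to $F\res X$ gives $\hom_R(X,I)\res\hom_R(F,I)$, and $\hom_R(F,I)$ represents $\rhom_R(X,Y)$.

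For the statement about modules, I would take $M$ and $N$ to be $R$-modules regarded as complexes in degree $0$ (Fact~\ref{f0103}), choose a classical free resolution $F\res M$ and a classical injective resolution $N\res I$, and view these as resolutions of complexes via Fact~\ref{f0109}. Feeding them into the two cases already proved gives $M\lotimes_RN\simeq F\otimes_RN$ and $\rhom_R(M,N)\simeq\hom_R(F,N)$, and then one only reads off homology using Fact~\ref{f0104}: the complex $F\otimes_RN$ is the usual complex $\cdots\to F_i\otimes_RN\to F_{i-1}\otimes_RN\to\cdots$, whose $i$th homology is $\tor_i^R(M,N)$ by definition; and $\hom_R(F,N)$ is, up to the harmless sign on the differential noted in Fact~\ref{f0104}, the usual complex computing $\ext$, concentrated in degrees $\leq 0$ because $N$ lives in degree $0$, so its $(-i)$th homology is $\ext^i_R(M,N)$. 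This yields the claimed isomorphisms for every $i$.

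The one genuinely substantive step, and the part I expect to be the main obstacle to writing out honestly, is the K-flatness/K-projectivity/K-injectivity claim. For $F$ bounded on both sides I would prove it by induction on the number of nonzero terms, using the evident short exact sequences of brutal truncations (which are degreewise split): the base case is the plain exactness of $-\otimes_RF_n$ and $\hom_R(F_n,-)$, and the inductive step passes to mapping cones via the long exact homology sequence. For $F$ merely bounded below, I would write $F$ as the colimit of its bounded truncations; tensoring commutes with this filtered colimit, while $\hom_R(F,-)$ is the corresponding inverse limit, whose $\lim^1$ term vanishes because the transition maps are degreewise split surjective. The injective case is formally dual. Since this is entirely standard and somewhat lengthy, in keeping with the expository purpose of this appendix I would instead cite it — see, e.g., \cite{avramov:dgha}, \cite{foxby:hacr}, or \cite{hartshorne:rad} — and confine the written proof to the short deductions above.
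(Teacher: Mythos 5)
The paper states Fact~\ref{f0106} without proof: the appendix announces at the outset that it will not prove the background facts it records and instead refers the reader to texts such as \cite{gelfand:moha} and \cite{hartshorne:rad}. So there is no in-paper argument to compare yours against, and the relevant question is simply whether your proof is correct. It is, and it is essentially the standard one. You correctly isolate the single substantive input --- that a bounded-below complex of frees is K-projective and K-flat, and a bounded-above complex of injectives is K-injective --- and derive both displayed strings of quasiisomorphisms from it by functoriality, then specialize to modules via Facts~\ref{f0103}, \ref{f0109}, and~\ref{f0104}. Your decision to cite the K-property rather than prove it in full matches the expository register of the appendix.

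One small caution on the sketch of the bounded-below case. You say the $\lim^1$ term vanishes ``because the transition maps are degreewise split surjective.'' That condition is what entitles you to the Milnor sequence
$$0 \to {\textstyle\lim^1_n} \HH_{i+1}\bigl(\hom_R(F^{(n)},Z)\bigr) \to \HH_i\bigl(\hom_R(F,Z)\bigr) \to {\textstyle\lim_n} \HH_i\bigl(\hom_R(F^{(n)},Z)\bigr) \to 0,$$
but the $\lim^1$ appearing there is taken over the homology of the stages, and it vanishes not because of split surjectivity but because each $\HH_j(\hom_R(F^{(n)},Z))$ is already zero (by the bounded case applied to the acyclic complex $Z$). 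The argument goes through; just reorder the justification. Alternatively, the cleaner route to K-projectivity of a bounded-below complex of projectives is the classical lifting lemma --- any chain map from such a complex to an acyclic one is null-homotopic, by inductive construction of the homotopy --- which avoids inverse limits entirely.
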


The next isomorphisms follow from Fact~\ref{f3112} using appropriate resolutions.

\begin{fact} \label{f4112}
If $X$, $Y$ and $Z$ are  $R$-complexes, then there are isomorphisms in $\catd(R)$
\begin{align}
\rhom_R(R,X)
&\simeq X \label{f4112b} \tag{cancellation}\\
X\lotimes_RY
&\simeq Y\lotimes_RX  \label{f4112c} \tag{commutativity}\\
\rhom_R(X\oplus Y,Z)
&\simeq\rhom_R(X,Z)\oplus\rhom_R(Y,Z) \tag{additivity} \\
\rhom_R(X,Y\oplus Z)
&\simeq\rhom_R(X,Z)\oplus\rhom_R(X,Y) \tag{additivity} \\
\rhom_R(X\lotimes_RY,Z)
&\simeq \rhom_R(X,\rhom_R(Y,Z))  \label{f4112e} \tag{adjointness}\\
\rhom_R(\shift^iX,\shift^jY)
&\simeq\shift^{j-i}\rhom_R(X,Y). \tag{shift}
\intertext{Let $S$ be a flat $R$-algebra. If 
$X$ is homologically both degreewise finite and bounded below, then}
\rhom_S(S\lotimes_RX,S\lotimes_RY)
&\simeq S\lotimes_R\rhom_R(X,Y). \tag{base-change}
\end{align}
\end{fact}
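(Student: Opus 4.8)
The plan is to deduce each isomorphism in $\catd(R)$ from the corresponding isomorphism of honest $R$-complexes recorded in Fact~\ref{f3112}, by replacing the complexes appearing in it with appropriate resolutions and invoking Fact~\ref{f0106}. Throughout, I will choose a free resolution (via Fact~\ref{f0101}) of each complex that occurs as an argument of $\lotimes$ or as the first argument of $\rhom$, and an injective resolution of each complex that occurs as the second argument of $\rhom$; by Fact~\ref{f0106} the resulting complexes of free or injective modules compute the derived functors, so it suffices to produce the desired isomorphism at the level of these resolutions and then transport it along the connecting quasi-isomorphisms.

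For cancellation, commutativity, additivity, and shift this recipe applies directly. Cancellation is immediate since $R$ is its own free resolution, so $\rhom_R(R,X)\simeq\hom_R(R,X)\cong X$ by \eqref{f3112b}. For commutativity, with free resolutions $F$ of $X$ and $G$ of $Y$ one has $X\lotimes_RY\simeq F\otimes_RG\cong G\otimes_RF\simeq Y\lotimes_RX$ using \eqref{f3112c}. For additivity, observe that $F\oplus F'$ is a free resolution of $X\oplus Y$ and $I\oplus I'$ an injective resolution of $Y\oplus Z$ whenever $F,F',I,I'$ resolve $X,Y,Y,Z$; then the additivity isomorphisms of Fact~\ref{f3112} applied to these resolutions give the claim. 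For shift, $\shift^iF$ is a free resolution of $\shift^iX$ and $\shift^jI$ an injective resolution of $\shift^jY$ by Fact~\ref{f0107}, and the shift isomorphism of Fact~\ref{f3112} finishes it.

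Adjointness needs one extra check. Take free resolutions $F$ of $X$, $G$ of $Y$, and an injective resolution $I$ of $Z$. The key observations are that $F\otimes_RG$ is a bounded-below complex of free $R$-modules --- hence, by Fact~\ref{f0106}, a complex computing $X\lotimes_RY$ that may also be used in the first slot of $\rhom(-,Z)$ --- and that $\hom_R(G,I)$ is a bounded-above complex of injective $R$-modules --- injectivity because $\hom_R$ of a free module into an injective module is injective, and boundedness by Fact~\ref{f0112} --- hence an injective resolution of $\rhom_R(Y,Z)$. With these in hand,
\begin{align*}
\rhom_R(X\lotimes_RY,Z)
&\simeq\hom_R(F\otimes_RG,I)
\cong\hom_R(F,\hom_R(G,I)) \\
&\simeq\rhom_R(X,\rhom_R(Y,Z)),
\end{align*}
where the middle isomorphism is \eqref{f3112e}.

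For base-change, let $F$ be a minimal free resolution of $X$; by Fact~\ref{f0101} it exists and has finitely generated free terms with $F_i=0$ for $i\ll0$. Since $S$ is flat over $R$, the functor $S\otimes_R(-)$ is exact and so preserves quasi-isomorphisms; thus $S\otimes_RF$ is a bounded-below complex of finitely generated free $S$-modules quasi-isomorphic to $S\lotimes_RX$, and $S\lotimes_RW\simeq S\otimes_RW$ for every $R$-complex $W$. Combining this with Fact~\ref{f0106} (over $R$ and over $S$) and the base-change isomorphism of Fact~\ref{f3112} applied to the finitely generated complex $F$, one obtains
\begin{align*}
\rhom_S(S\lotimes_RX,S\lotimes_RY)
&\simeq\hom_S(S\otimes_RF,S\otimes_RY)
\cong S\otimes_R\hom_R(F,Y) \\
&\simeq S\lotimes_R\rhom_R(X,Y).
\end{align*}
The main obstacle, such as it is, lies in these last two cases: verifying that the auxiliary complexes $F\otimes_RG$ and $\hom_R(G,I)$ genuinely have the boundedness and projectivity/injectivity needed to compute the relevant derived functors, and, for base-change, checking that flatness of $S$ lets one replace $S\lotimes_R(-)$ by $S\otimes_R(-)$ everywhere and that $S\otimes_RF$ serves as a legitimate free resolution of $S\lotimes_RX$ over $S$. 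The remaining verifications are routine.
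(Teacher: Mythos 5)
Your proposal is correct and is precisely the argument the paper intends: the text introducing this fact says only that the isomorphisms ``follow from Fact~\ref{f3112} using appropriate resolutions,'' and you have supplied exactly those resolutions, including the two genuinely nontrivial verifications (that $F\otimes_RG$ is a bounded-below complex of free modules and $\hom_R(G,I)$ a bounded-above complex of injectives, and that flatness of $S$ lets $S\otimes_R(-)$ stand in for $S\lotimes_R(-)$). No discrepancies to report.
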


The following homological bounds are consequences of Fact~\ref{f0112}.

\begin{fact} \label{f0111}
Let $X$ and $Y$ be homologically bounded below $R$-complexes. 
Let $F$ and $G$ be free resolutions $X$ and $Y$, respectively,
such that $F_i=0$ for $i<\inf (X)$ and $G_i=0$ for $i<\inf(Y)$.
It follows that, for $i<\inf(X)+\inf(Y)$, we have
$$\HH_i(X\lotimes_RY)\cong\HH_i(F\otimes_RG)=0$$
and hence $\inf(X\lotimes_RY)\geq\inf(X)+\inf(Y)$.
Furthermore, the right exactness of tensor product yields the second isomorphism
in the next sequence
$$\HH_{\inf(X)+\inf(Y)}(X\lotimes_RY)
\cong \HH_{\inf(X)+\inf(Y)}(F\otimes_RG)
\cong \HH_{\inf(X)}(X)\otimes_R \HH_{\inf(Y)}(Y).$$
This corresponds to the well-known formula $\tor_0^R(M,N)\cong M\otimes_RN$
for modules $M$ and $N$.
If $\HH_{\inf(X)}(X)$ and $\HH_{\inf(Y)}(Y)$ are both finitely generated,
e.g., if $X$ and $Y$ are both homologically degreewise finite,
then Nakayama's Lemma implies that
$$\HH_{\inf(X)+\inf(Y)}(X\lotimes_RY)
\cong \HH_{\inf(X)}(X)\otimes_R \HH_{\inf(Y)}(Y)\neq 0$$
and thus $\inf(X\lotimes_RY)=\inf(X)+\inf(Y)$.
Note that this explicitly uses the assumption that $R$ is local.

A similar argument shows that, when $Z$ is homologically bounded above,
then the complex $\rhom_R(X,Z)$ is homologically bounded above:
there is an inequality
$\sup(\rhom_R(X,Z))\leq\sup (Z)-\inf (X)$ and an isomorphism
$$\HH_{\sup (Z)-\inf (X)}(\rhom_R(X,Z))\cong\hom_R(\HH_{\inf(X)}(X),\HH_{\sup(Z)}(Z)).$$
\end{fact}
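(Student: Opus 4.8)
The plan is to deduce every assertion from direct computations with carefully chosen resolutions, in the same spirit as the module identities $\tor_0^R(M,N)\cong M\otimes_RN$ and $\ext^0_R(M,N)\cong\hom_R(M,N)$. Write $s=\inf(X)$, $t=\inf(Y)$, and $u=\sup(Z)$. For the tensor-product claims, I would first use Fact~\ref{f0101}: since $X$ and $Y$ are homologically bounded below they have free resolutions $F$ and $G$ with $F_i=0$ for $i<s$ and $G_i=0$ for $i<t$. By Fact~\ref{f0106}, $X\lotimes_RY\simeq F\otimes_RG$, and Fact~\ref{f0112} gives $(F\otimes_RG)_i=0$ for $i<s+t$, so $\inf(X\lotimes_RY)\geq s+t$. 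To compute the homology in degree $s+t$, note that $(F\otimes_RG)_{s+t-1}=0$, that the only surviving summand of $(F\otimes_RG)_{s+t}=\coprod_jF_j\otimes_RG_{s+t-j}$ is $F_s\otimes_RG_t$, and that the differential out of degree $s+t+1$ is, up to signs, the map $(F_{s+1}\otimes_RG_t)\oplus(F_s\otimes_RG_{t+1})\to F_s\otimes_RG_t$ induced by $\partial^F_{s+1}$ and $\partial^G_{t+1}$. Its cokernel, by right-exactness of $\otimes_R$ in each variable, is $(F_s/\im(\partial^F_{s+1}))\otimes_R(G_t/\im(\partial^G_{t+1}))$; and since $F_{s-1}=0=G_{t-1}$ these cokernels are $\HH_s(F)\cong\HH_{\inf(X)}(X)$ and $\HH_t(G)\cong\HH_{\inf(Y)}(Y)$. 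This gives the asserted isomorphism $\HH_{s+t}(X\lotimes_RY)\cong\HH_{\inf(X)}(X)\otimes_R\HH_{\inf(Y)}(Y)$.

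For the equality $\inf(X\lotimes_RY)=s+t$ when $X$ and $Y$ are homologically degreewise finite (and nonzero), I would note that the two bottom homology modules are then finitely generated and nonzero, so over the local ring $R$ their tensor product is nonzero; this is the one place where Nakayama's lemma and the locality of $R$ are genuinely used (as in the appendix's treatment of $\tor_0$), and I expect it to be the only non-formal step in the whole argument.

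The $\rhom$ statement is handled dually. Keep the resolution $F$ of $X$ and, using Fact~\ref{f0101} and the hypothesis that $Z$ is homologically bounded above, choose an injective resolution $I$ of $Z$ with $I_j=0$ for $j>u$. Then $\rhom_R(X,Z)\simeq\hom_R(F,I)$ by Fact~\ref{f0106}, and Fact~\ref{f0112} gives $\hom_R(F,I)_i=0$ for $i>u-s$, hence $\sup(\rhom_R(X,Z))\leq u-s$. In degree $u-s$ the only surviving factor of $\hom_R(F,I)_{u-s}=\prod_j\hom_R(F_j,I_{j+u-s})$ is $\hom_R(F_s,I_u)$, the term in degree $u-s+1$ vanishes, and the term in degree $u-s-1$ is $\hom_R(F_s,I_{u-1})\oplus\hom_R(F_{s+1},I_u)$, with the differential carrying $\phi$ to $(\partial^I_u\circ\phi,\ \phi\circ\partial^F_{s+1})$ up to signs. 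Thus $\HH_{u-s}(\hom_R(F,I))$ is exactly the module of homomorphisms $F_s\to I_u$ that kill $\im(\partial^F_{s+1})$ and land in $\ker(\partial^I_u)$; since $F_s/\im(\partial^F_{s+1})=\HH_s(X)$ (because $F_{s-1}=0$) and $\ker(\partial^I_u)=\HH_u(I)=\HH_{\sup(Z)}(Z)$ (because $I_{u+1}=0$), this is precisely $\hom_R(\HH_{\inf(X)}(X),\HH_{\sup(Z)}(Z))$, as claimed. Throughout, the sign twists in the differentials of the tensor- and Hom-complexes (Facts~\ref{f0104} and~\ref{f0112}) are irrelevant, since they change neither kernels nor images, so beyond the locality argument isolated above there is no real obstacle.
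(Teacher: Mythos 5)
Your proposal is correct and follows essentially the same route as the paper's sketch: bounded resolutions from Fact~\ref{f0101}, the degree bound from Fact~\ref{f0112}, a direct computation of the extremal homology module using right-exactness of tensor (respectively, the defining property of Hom), and Nakayama's lemma (with locality) for the nonvanishing. You simply fill in the explicit identification of the low-degree terms of $F\otimes_R G$ and of the high-degree terms of $\hom_R(F,I)$, which the paper leaves implicit, and your unwinding of the differentials, cokernels, and kernels is accurate, including your observation that the sign twists are harmless and that the locality hypothesis enters only through Nakayama.
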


The next fact is a derived category
version of  the finite generation of Ext and Tor of finitely generated modules.
It essentially follows from~\ref{f0106}.

\begin{fact} \label{f0114}
Let $X$ and $Y$ be  $R$-complexes that are homologically both
degreewise finite and bounded below.
Let $F$ and $G$ be free resolutions of $X$ and $Y$, respectively,
such that each $F_i$ and $G_i$ is  finitely generated.
Then $F\otimes_RG$ is a free resolution of $X\lotimes_RY$, and
each $R$-module
$(F\otimes_RG)_i$ is  finitely  generated.
In particular, the complex $X\lotimes_RY$ is homologically both
degreewise finite and bounded below.
If $F$ and $G$ are minimal,
then
$F\otimes_RG$ is a minimal free resolution of $X\lotimes_R Y$.

It takes a little more work to show that, if $Z$ is homologically both
degreewise finite and bounded above, then 
the $R$-complex
$\rhom_R(X,Z)$ is homologically both degreewise finite and bounded above.
\end{fact}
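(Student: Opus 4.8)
The plan is to obtain an explicit resolution of $X\lotimes_R Y$ by tensoring resolutions of $X$ and $Y$, and to treat the $\rhom$ assertion by first replacing $Z$ with a convenient bounded-above representative.

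For the assertions about $X\lotimes_R Y$, I would fix the given free resolutions $F\simeq X$ and $G\simeq Y$ with finitely generated terms; being free resolutions, both are bounded below (Definition~\ref{d0103}). Fact~\ref{f0106} gives $X\lotimes_R Y\simeq F\otimes_R G$ in $\catd(R)$, and since every term of $F\otimes_R G$ is free this already exhibits $F\otimes_R G$ as a free resolution of $X\lotimes_R Y$. For finiteness of the terms, $(F\otimes_R G)_i=\coprod_j F_j\otimes_R G_{i-j}$ by Definition~\ref{d0105}, and because $F$ and $G$ are bounded below, Fact~\ref{f0112} shows both that only finitely many of these summands are nonzero and that $(F\otimes_R G)_i=0$ for $i\ll 0$; each nonzero summand is a tensor product of finitely generated free modules, hence finitely generated and free. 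So $F\otimes_R G$ is a degreewise-finite, bounded-below free resolution of $X\lotimes_R Y$, and since $R$ is noetherian each $\HH_i(X\lotimes_R Y)\cong\HH_i(F\otimes_R G)$ is a subquotient of a finitely generated module and hence finitely generated; thus $X\lotimes_R Y$ is homologically both degreewise finite and bounded below. If $F$ and $G$ are minimal, then the Leibniz-type formula for $\partial^{F\otimes_R G}$ in Definition~\ref{d0105} gives $\partial^{F\otimes_R G}(F\otimes_R G)\subseteq\m(F\otimes_R G)$, so $F\otimes_R G$ is a minimal free resolution.

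For the $\rhom$ statement the obstruction is that $Z$, being only homologically bounded above, need not have a free resolution. My plan is to use the standard fact that a homologically degreewise finite and homologically bounded above $R$-complex $Z$ over the noetherian ring $R$ is isomorphic in $\catd(R)$ to a complex $W$ of finitely generated free modules with $W_i=0$ for $i>\sup(Z)$; one builds $W$ from the top down, at each stage surjecting a shift of a finitely generated free module onto the (finitely generated) top nonvanishing homology and passing to a mapping cone. Fixing a free resolution $F$ of $X$ with $F_i=0$ for $i<\inf(X)$ (Fact~\ref{f0101}), Fact~\ref{f0106} gives $\rhom_R(X,Z)\simeq\rhom_R(X,W)\simeq\hom_R(F,W)$. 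By Definition~\ref{d0105}, $\hom_R(F,W)_n=\prod_j\hom_R(F_j,W_{j+n})$, and the vanishing $F_j=0$ for $j<\inf(X)$ and $W_{j+n}=0$ for $j+n>\sup(Z)$ confines the index to the finite range $\inf(X)\le j\le\sup(Z)-n$; hence $\hom_R(F,W)_n=0$ for $n>\sup(Z)-\inf(X)$, and otherwise $\hom_R(F,W)_n$ is a finite direct sum of copies of the finitely generated modules $W_{j+n}$. So $\hom_R(F,W)$ is a bounded-above complex of finitely generated modules, whence $\rhom_R(X,Z)$ is homologically both degreewise finite and bounded above.

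The step I expect to be the main obstacle is precisely the construction of $W$, i.e.\ producing a finitely generated representative of a complex that is bounded on one side only. To sidestep it one can instead take an injective resolution $I$ of $Z$ with $I_k=0$ for $k>\sup(Z)$ (Fact~\ref{f0101}) and analyze the double complex $\hom_R(F_j,I_k)$, which is bounded below in $j$ and above in $k$: computing homology first in the $k$-direction gives the finitely generated modules $\hom_R(F_j,\HH_k(Z))$, and the vanishing ranges from Fact~\ref{f0112} ensure that only finitely many of these lie in each total degree, so the associated spectral sequence converges to $\HH(\rhom_R(X,Z))$ in finitely generated homology; the bound $\sup(\rhom_R(X,Z))\le\sup(Z)-\inf(X)$ is the one already recorded in Fact~\ref{f0111}. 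Everything else is routine bookkeeping with the formulas of Definition~\ref{d0105}.
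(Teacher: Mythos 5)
Your treatment of $X\lotimes_R Y$ is correct and follows the route the paper indicates: $F\otimes_R G\simeq X\lotimes_R Y$ by Fact~\ref{f0106}, the range restrictions from Fact~\ref{f0112} make each $(F\otimes_RG)_i$ a finite direct sum of finitely generated free modules, homology of a degreewise finite bounded-below complex over the noetherian ring is finitely generated, and the Leibniz formula gives $\partial^{F\otimes_RG}(F\otimes_RG)\subseteq\m(F\otimes_RG)$ when $F$ and $G$ are minimal. No issues there.

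Your primary argument for the $\rhom_R(X,Z)$ assertion, however, rests on a ``standard fact'' that is in fact false. You claim that a homologically degreewise finite, homologically bounded-above complex $Z$ is isomorphic in $\catd(R)$ to a complex $W$ of finitely generated free modules with $W_i=0$ for $i>\sup(Z)$. Take $R=\bbz_{(2)}$ and $Z=\bbz/2\bbz$ concentrated in degree $0$, so $\sup(Z)=0$. If such a $W$ existed, then $W_1=0$ would give $\HH_0(W)=\ker(\partial^W_0)$, a submodule of the free (hence torsion-free) module $W_0$; this cannot be isomorphic to the torsion module $\bbz/2\bbz$. The top-down mapping-cone construction you sketch also does not produce such a $W$: coning a map $\shift^sW_s\to Z$ that is surjective on $\HH_s$ creates new homology in degree $s+1$ (the kernel of $W_s\to\HH_s(Z)$), so the nonzero homology moves \emph{up}, not down. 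That construction is the one that yields a free resolution, which is bounded \emph{below}; there is no analogous procedure producing a bounded-above complex of finitely generated free modules quasiisomorphic to an arbitrary $Z$.

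Your fallback argument via the double complex $\hom_R(F_j,I_k)$ is the right one, and it is sound once you make the convergence explicit. With $F_j=0$ for $j<\inf(X)$ and $I_k=0$ for $k>\sup(Z)$, each total degree $n=k-j$ confines the index to $\inf(X)\le j\le\sup(Z)-n$, a finite range; hence the column filtration of $\hom_R(F,I)$ is a finite filtration in every degree, and the spectral sequence with $E^1_{j,k}=\hom_R(F_j,\HH_k(Z))$ converges strongly. Since each $E^1_{j,k}$ is finitely generated (finitely generated free $F_j$ into finitely generated $\HH_k(Z)$ over the noetherian $R$), $\HH_n(\rhom_R(X,Z))$ has a finite filtration with finitely generated subquotients and is therefore finitely generated. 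I would present only this second argument and discard the first.
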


Here are some homological invariants that are familiar for modules.

\begin{defn} \label{d0104}
Let $X$ be a homologically finite $R$-complex.
The $i$th \emph{Bass number} of  $X$ is 
the integer $\mu^i_R(X)=\rank_k(\HH_{-i}(\rhom_R(k,X)))$, and
the \emph{Bass series} of $X$ is the formal Laurent series
$I^X_R(t)=\sum_{i\in\bbz}\mu^i_R(X)t^i$.
The $i$th \emph{Betti number} of  $X$ is 
the integer $\beta_i^R(X)=\rank_k(\HH_{i}(k\lotimes_RX))$, and
the \emph{Poincar\'{e} series} of $X$ is the formal Laurent series
$P_X^R(t)=\sum_{i\in\bbz} \beta_i^R(X)t^i$.
\end{defn}

\begin{fact} \label{f0113}
If $M$ is an $R$-module, then we have
$\mu^i_R(M)=\rank_k(\ext^i_R(k,M))$
and
$\beta^R_i(M)=\rank_k(\tor^R_i(k,M))=\rank_k(\ext^i_R(M,k))$.
\end{fact}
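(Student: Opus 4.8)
The plan is to unwind the definitions from Appendix~\ref{sec01} and reduce everything to the behaviour of a minimal free resolution. Note first that in order to apply Definition~\ref{d0104} the module $M$ must be finitely generated (equivalently, homologically finite as a complex, by Fact~\ref{f0105}), so I will assume this. The first identity is then immediate: Definition~\ref{d0104} gives $\mu^i_R(M)=\rank_k(\HH_{-i}(\rhom_R(k,M)))$, and Fact~\ref{f0106} gives $\HH_{-i}(\rhom_R(k,M))\cong\ext^i_R(k,M)$, so $\mu^i_R(M)=\rank_k(\ext^i_R(k,M))$. Likewise Definition~\ref{d0104} together with Fact~\ref{f0106} yields $\beta^R_i(M)=\rank_k(\HH_i(k\lotimes_RM))=\rank_k(\tor^R_i(k,M))$, and the commutativity isomorphism of Fact~\ref{f4112} gives $\tor^R_i(k,M)\cong\tor^R_i(M,k)$, so $\beta^R_i(M)=\rank_k(\tor^R_i(M,k))$ as well.

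The only remaining content is the equality $\rank_k(\tor^R_i(M,k))=\rank_k(\ext^i_R(M,k))$, and here I would invoke Fact~\ref{f0101}: since $M$ is finitely generated it has a minimal free resolution $F$, with each $F_j$ finitely generated and $\im(\partial^F_j)\subseteq\m F_{j-1}$. The key observation is that the induced differentials on both $F\otimes_Rk$ and $\hom_R(F,k)$ vanish: for the first, $\partial^F_j\otimes_Rk$ factors through $\m F_{j-1}\otimes_Rk=0$; for the second, precomposition with $\partial^F_j$ produces $R$-homomorphisms whose image lies in $\m\cdot k=0$. Computing Tor and Ext from $F$ via Fact~\ref{f0106}, one thus gets $\tor^R_i(M,k)\cong F_i\otimes_Rk$ and $\ext^i_R(M,k)\cong\hom_R(F_i,k)$, each a $k$-vector space of dimension $\rank_R(F_i)$; the same computation (applied to $k\otimes_RF$) shows $\beta^R_i(M)=\rank_R(F_i)$, which keeps everything consistent with Definition~\ref{d0104}. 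Hence all three quantities coincide.

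I do not anticipate any genuine obstacle. The one point requiring a little care is the indexing and sign conventions of Definition~\ref{d0105} for $\hom_R(F,k)$: one must check that $\hom_R(F,k)_{j}\cong\hom_R(F_{-j},k)$ and that the $(-1)$-twisted differentials still vanish, but since $k$ is annihilated by $\m$ this is immediate. A resolution-free alternative for the last step is to observe that $\tor^R_i(M,k)$ and $\ext^i_R(M,k)$ are modules over $R/\m=k$, hence finite-dimensional $k$-vector spaces, and that $\ext^i_R(M,k)\cong\hom_k(\tor^R_i(M,k),k)$ by the usual adjointness argument, which forces the $k$-ranks to agree; either route completes the proof.
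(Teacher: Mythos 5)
The paper states this Fact without proof, so there is no argument of the paper's own to compare against; your proof correctly supplies the missing justification. The first two equalities are indeed immediate from Definition~\ref{d0104} together with Fact~\ref{f0106}, and the commutativity isomorphism of Fact~\ref{f4112} handles the passage from $\tor^R_i(k,M)$ to $\tor^R_i(M,k)$. Your minimal-free-resolution argument for the final equality is correct: minimality (together with $\m k=0$) forces the induced differentials on both $k\otimes_R F$ and $\hom_R(F,k)$ to vanish, so all three quantities equal $\rank_R(F_i)$ -- consistent with the computation the paper itself records in Fact~\ref{f0102}. Your preliminary remark that $M$ must be finitely generated for Definition~\ref{d0104} to apply (via Fact~\ref{f0105}) is a legitimate point that the Fact leaves implicit, and the index check $\hom_R(F,k)_j\cong\hom_R(F_{-j},k)$ is the right thing to verify. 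The resolution-free alternative you sketch, $\ext^i_R(M,k)\cong\hom_k(\tor^R_i(M,k),k)$, is also a valid and slightly slicker route for the last step.
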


We conclude with useful formulas for the Poincar\'e and Bass series of, respectively,
derived tensor products and derived homomorphism complexes.

\begin{fact} \label{f0102}
Let $X$ and $Y$ be  $R$-complexes that are homologically both
degreewise finite and bounded below.
If $F$ is a minimal free resolution of $X$,
then $\beta_i^R(X)=\rank_R(F_i)$ for all $i\in\bbz$.
(Indeed the complex $k\otimes_RF$
has zero differential, and hence
$$\HH_i(k\lotimes_RX)
\cong \HH_i(k\otimes_RF)
\cong (k\otimes_RF)_i
\cong k\otimes_RF_i.
$$
The $k$-vector space rank of this module is precisely $\rank_R(F_i)$.)
Combining this with
Fact~\ref{f0114}, we conclude that
$$P_{X\lotimes_RY}^R(t)=P_X^R(t)P_Y^R(t).$$
Furthermore, the equality $\beta_i^R(X)=\rank_R(F_i)$ for all $i\in\bbz$
implies that $P^R_X(t)=0$ if and only if $F=0$, that is, if and only if $X\simeq 0$.
See also Fact~\ref{f0111}.

Given an $R$-complex $Z$ that is homologically both
degreewise finite and bounded above, a 
different argument
yields the next formula
$$I^{\rhom_R(X,Z)}_R(t)=P^R_X(t)I^Z_R(t).$$
Furthermore, we have $I^Z_R(t)=0$ if and only if $Z\simeq 0$.
See~\cite[(1.5.3)]{avramov:rhafgd}.
\end{fact}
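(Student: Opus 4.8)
The plan is to derive every assertion from the single computation $\beta_i^R(X)=\rank_R F_i$, where $F$ is a minimal free resolution of $X$, and then transport it through the standard isomorphisms collected in Appendix~\ref{sec01}. Since $X$ is homologically degreewise finite and bounded below, such an $F$ exists by Fact~\ref{f0101}, and minimality ($\im\partial^F\subseteq\m F$) makes the differential of $k\otimes_RF$ zero; hence $\HH_i(k\lotimes_RX)\cong\HH_i(k\otimes_RF)=(k\otimes_RF)_i\cong k\otimes_RF_i$ by Fact~\ref{f0106}, a $k$-vector space of rank $\rank_RF_i$. This is the computation already sketched in the statement. Two immediate consequences: $F_i=0$ for $i<\inf(X)$, so $P^R_X(t)$ is a formal Laurent series bounded below (so products of such series are well defined coefficientwise); and $P^R_X(t)=0$ iff every $F_i=0$ iff $F=0$ iff $X\simeq 0$.

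For the product formula I would invoke Fact~\ref{f0114}: choosing minimal free resolutions $F$ of $X$ and $G$ of $Y$ with finitely generated terms, the complex $F\otimes_RG$ is a minimal free resolution of $X\lotimes_RY$, and it is bounded below by Fact~\ref{f0112}. Applying the computation of the first paragraph three times gives
$$\beta_i^R(X\lotimes_RY)=\rank_R\bigl((F\otimes_RG)_i\bigr)=\sum_{j\in\bbz}(\rank_RF_j)(\rank_RG_{i-j})=\sum_{j\in\bbz}\beta^R_j(X)\beta^R_{i-j}(Y),$$
which is the degree-$i$ coefficient of $P^R_X(t)P^R_Y(t)$; hence $P^R_{X\lotimes_RY}(t)=P^R_X(t)P^R_Y(t)$.

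For the Bass-series formula, the key move is adjointness (Fact~\ref{f4112}): $\rhom_R(k,\rhom_R(X,Z))\simeq\rhom_R(k\lotimes_RX,Z)$. With $F$ a minimal free resolution of $X$, the complex $k\lotimes_RX\simeq k\otimes_RF$ has zero differential, so in $\catd(R)$ it is $\bigoplus_i\shift^i(k^{\beta_i^R(X)})$; feeding this into $\rhom_R(-,Z)$ and using additivity (valid also for this coproduct) and the shift isomorphism of Fact~\ref{f4112} gives
$$\rhom_R(k,\rhom_R(X,Z))\simeq\prod_i\shift^{-i}\bigl(\rhom_R(k,Z)^{\beta_i^R(X)}\bigr).$$
Since $Z$ is homologically bounded above, so is $\rhom_R(k,Z)$ (Fact~\ref{f0111}), so in each homological degree only finitely many indices $i$ contribute; taking $\HH_{-n}$ and $k$-ranks yields $\mu^n_R(\rhom_R(X,Z))=\sum_i\beta_i^R(X)\,\mu^{n-i}_R(Z)$, the degree-$n$ coefficient of $P^R_X(t)I^Z_R(t)$. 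Hence $I^{\rhom_R(X,Z)}_R(t)=P^R_X(t)I^Z_R(t)$.

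It remains to show $I^Z_R(t)=0$ iff $Z\simeq 0$; that $Z\simeq 0$ forces $I^Z_R(t)=0$ is clear since then $\rhom_R(k,Z)\simeq 0$. The converse is the one genuinely non-formal point and I expect it to be the main obstacle: unlike the Poincar\'e case, one cannot just inspect the top homology, since Fact~\ref{f0111} only gives $\HH_{\sup(Z)}(\rhom_R(k,Z))\cong\hom_R(k,\HH_{\sup(Z)}(Z))$, which can vanish for $Z\not\simeq 0$ (for instance $Z=R$ with $R$ not artinian, where $\soc(R)=0$). My plan is to reduce to the classical fact that $\rhom_R(k,-)$ detects nonzero homologically finite complexes over a local ring: base change to $\rhat$ (harmless by faithful flatness and the base-change isomorphism of Fact~\ref{f4112}), then peel off the homology modules of the bounded-above, degreewise-finite complex $Z$ by distinguished triangles to land on a nonzero finitely generated module $M$, for which the least $i$ with $\ext^i_R(k,M)\neq 0$ equals $\depth_RM\leq\dim_RM<\infty$, so $I^M_R(t)\neq 0$. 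Making the d\'evissage precise for a possibly unbounded-below $Z$ is the delicate step; since it is classical, in practice I would cite \cite[(1.5.3)]{avramov:rhafgd} for it, as the statement does.
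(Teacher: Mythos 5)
Your proposal is correct and follows essentially the same route as the paper: the minimality computation $\beta_i^R(X)=\rank_RF_i$ plus Fact~\ref{f0114} for the Poincar\'e product formula, and ultimately the citation to \cite[(1.5.3)]{avramov:rhafgd} for the Bass-series statements. The only difference is that where the paper simply refers to that citation, you sketch the standard adjointness/d\'evissage argument behind it, and you correctly isolate the detection statement $I^Z_R(t)=0\Leftrightarrow Z\simeq 0$ as the one genuinely non-formal point.
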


\section*{Acknowledgments}
I am grateful to Lars W.\ Christensen, Jim Coykendall,
David Jorgensen and Graham Leuschke for stimulating conversations
about this research, 
to Youngsu Kim 
and the referee for helpful comments,
and to Fatemeh  Mohammadi for noticing errors in an earlier version.

\providecommand{\bysame}{\leavevmode\hbox to3em{\hrulefill}\thinspace}
\providecommand{\MR}{\relax\ifhmode\unskip\space\fi MR }
\providecommand{\MRhref}[2]{%
  \href{http://www.ams.org/mathscinet-getitem?mr=#1}{#2}
}
\providecommand{\href}[2]{#2}

\end{document}